\DeclareMathOperator*{\tend}{\longrightarrow}
\DeclareMathOperator*{\egfty}{=}
\DeclareMathOperator*{\D}{\rm{div}}
\DeclareMathOperator*{\limss}{\overline{\rm lim}}
\theoremstyle{plain}
\newtheorem{defi}{Definition}
\newtheorem{thm}[defi]{Theorem}
\newtheorem{prop}[defi]{Proposition}
\newtheorem{cor}[defi]{Corollary}
\newtheorem{lemma}[defi]{Lemma}
\theoremstyle{definition}
\newtheorem{rmk}[defi]{Remark}
\newcommand{\mc}{\mathcal}
\newcommand{\what}{\widehat}
\newcommand{\wtilde}{\widetilde}
\renewcommand{\t}{\tau}
\newcommand{\R}{\mathbb{R}}
\newcommand{\Z}{\mathbb{Z}}
\renewcommand{\P}{\mathbb{P}}
\newcommand{\curl}{{\rm curl}\,}
\newcommand{\dx}{ \, {\rm d} x}
\newcommand{\dt}{ \, {\rm d} t}
\newcommand{\dy}{ \, {\rm d} y}
\begin{document}

\newcommand{\cobb}[1]{\textcolor{red}{[***DC: #1 ***]}}
\newcommand{\koch}[1]{\textcolor{blue}{[***HK: #1 ***]}}

\title{\textsc{\Large{\textbf{Unbounded Yudovich Solutions of the Euler Equations}}}}

\author{Dimitri Cobb and Herbert Koch \vspace{0.5cm}\\
\footnotesize{\textsc{Universität Bonn}} -- {\footnotesize \it Mathematisches Institut} \vspace{.1cm} \\
\footnotesize{\ttfamily{cobb@math.uni-bonn.de}}\\
\footnotesize{\ttfamily{koch@math.uni-bonn.de}}
}

\vspace{.2cm}
\date\today

\maketitle

\begin{abstract}
    In this article, we will study unbounded solutions of the 2D incompressible Euler equations. One of the motivating factors for this is that the usual functional framework for the Euler equations (\textsl{e.g.} based on finite energy conditions, such as $L^2$) does not respect some of the symmetries of the problem, such as Galileo invariance. 

    Our main result, global existence and uniqueness of solutions for initial data with square-root growth $O(|x|^{\frac{1}{2} - \epsilon})$ and bounded vorticity, is based on two key ingredients. Firstly an integral decomposition of the pressure, and secondly examining local energy balance leading to solution estimates in local Morrey type spaces. We also prove continuity of the initial data to solution map by a substantial adaptation of Yudovich's uniqueness argument.
\end{abstract}

{\footnotesize\textbf{Key words:} Euler equations, unbounded solutions, Yudovich solutions, Morrey spaces, local energy balance.}

\smallskip

{\footnotesize \textbf{Mathematics Subject Classification (MSC2020):} 
35Q31 (main), %Euler equations
76B03, %Existence and uniqueness for incompressible inviscid fluids
35Q35, %PDEs and fluid dynamics
35S30, %Fourier integral operators for PDEs
46E30 (secondary). %Spaces of measurable functions ($L^p$ spaces, Lorenz, Orlicz, etc.)
}

\section{Introduction}

The purpose of this article is to study the existence and uniqueness of unbounded solutions of the 2D Euler equations with Yudovich regularity. These equations describe the evolution of a perfect incompressible fluid, and are written as follow:
\begin{equation}\label{ieq:Euler}
    \begin{cases}
    \partial_t u + \D (u \otimes u) + \nabla \pi = 0 \\
    \D(u) = 0.
    \end{cases}
\end{equation}
In the above, $u(t,x) \in \R^d$ is the fluid velocity at time $t \in \R$ and point $x \in \R^d$, while $\pi(t,x) \in \R$ is the pressure field. The divergence operator $\D$ is defined on vector fields $u = (u_k)_{1 \leq k \leq d}$ and matrix fields $A = (A_{ij})_{1 \leq i, j \leq d}$ by, respectively, $\D(u) = \sum_k \partial_k u_k$ and $\D(A) = \sum_i \partial_i A_{ij} e_j$, where $(e_j)_{1 \leq j \leq d}$ is the canonical basis of $\R^d$. In most of this article, we focus on plane fluids $d=2$, although some of our results will also have applications to general dimensions $d \geq 2$. More precisely, we will show existence and uniqueness of solutions for initial data with bounded vorticity and at most square root growth:
\begin{equation}\label{ieq:UnboundedVelocity}
    \big| u_0(t,x) \big| \egfty_{|x| \rightarrow \infty} O \left( |x|^\alpha \right) \qquad \text{and} \qquad \| \omega_0 \|_{L^\infty(\R^2)} < \infty
\end{equation}
for some $0 \leq \alpha < \frac{1}{2}$, where $\omega_0 = \curl(u_0)$. As we do so, we will also have to examine some qualitative properties of unbounded solutions, such as the distribution of kinetic energy within the fluid. We will be exclusively using Eulerian methods (as opposed to Lagrangian methods based on flow maps), and require no decay of any kind on the vorticity.

\subsection{Presentation of the Problem}

There are many reasons to study unbounded solutions of the Euler equations. At first, this may seem surprising, as the conservation of energy implies that (regular enough) solutions should have finite total kinetic energy, and hence lie in the space $L^\infty_t(L^2_x)$. However, equations \eqref{ieq:Euler} are also invariant under the operation of the Galilean group: if $u(t,x)$ is a solution and $V \in \R^d$ is a fixed velocity defining a change of inertial reference frames, then
\begin{equation*}
    v(t,x) = u \big( t, x - Vt \big) + V
\end{equation*}
also is a solution of \eqref{ieq:Euler}. And if $u$ has finite energy, then it will certainly not be the case for $v$. A more natural functional setting would then be to study solutions that lie in a space that is stable under Galileo transform, unlike $L^\infty_t(L^2_x)$.

A second reason why the space $L^\infty_t(L^2_x)$ is inadequate is that solutions of \eqref{ieq:Euler} have no chance of being unique unless some regularity assumption is made. Typically, essentially all uniqueness results so far require the solution $u$ to be at least (log-)Lipschitz\footnote{To be precise, the flow of $u$ has to satisfy some Osgood type property. In this article, we will work with log-Lipschitz solutions.} (although actually \textsl{proving} a non-uniqueness result is extremely challenging, see for example \cite{Vishik1, Vishik2} for the case of a forced system with initial data $\omega_0 \in L^p$ and $p < \infty$, or \cite{BC, BM} for non-uniqueness of the non-forced equations \eqref{ieq:Euler} for initial data $\omega_0$ in a Lorentz space $L^{1, \infty}$, in the Hardy spaces $\mc H^p$ with $p < 1$, or the recent preprint \cite{BCK} for the case of $\omega_0 \in L^{1 + \epsilon}$). This (log)-Lipschitz requirement is compatible with the scaling invariance of the equations: if $\lambda, \mu > 0$ and $u(t,x)$ is a solution of the Euler equations, then
\begin{equation*}
    u_{\mu \lambda}(t,x) := \mu \lambda^{-1} u \big( \mu t, \lambda x \big)
\end{equation*}
also is a solution, so that more natural function spaces should share the scaling of $L^1_t(\dot{W}^{1, \infty}_x)$. Unfortunately, this is not the case for $L^\infty_t(L^2_x)$. However, as a set of initial data, $\dot{W}^{1, \infty}$ suffers from severe ill-posedness in the Euler  equations: consider for example the solution (see \cite{CW, Miller})
\begin{equation*}
    u(t,x) = \frac{1}{ 1 - t/T} \left(
    \begin{array}{c}
        x_1  \\
        -x_2 
    \end{array}
    \right) \qquad \text{and} \qquad \pi(t,x) = - \frac{1}{2T(1 - t/T)^2} \big( x_1^2 - x_2^2 + T (x_1^2 + x_2^2) \big),
\end{equation*}
which blows-up at time $T > 0$ that can be chosen arbitrarily small while keeping the initial velocity $u_0(x) = (x_1, -x_2)$ in the space $\dot{W}^{1, \infty}$. While this issue can be resolved by prescribing an asymptotic behavior to the pressure force \cite{CW} or symmetry conditions on the problem \cite{Elgindi2020, Miller}, we will not study it in this paper, as we restrict our attention to solutions which grow at most as $O(|x|^{\frac{1}{2} - \epsilon})$, and so cannot exhibit the pathological behavior described here. More general solutions will be the object of a forthcoming paper. However, the comments above highlight the importance of studying unbounded solutions.

Finally, from a very concrete point of view, infinite energy solutions can be used to represent large scale flows, such as geophysical fluids, where far-away boundary conditions (\textsl{e.g.} coastlines) are secondary to the dynamics. For example, in the framework of the $\beta$-plane approximation, variations of latitude are neglected, so that the computations are only valid in a mid-latitude region of a few thousand kilometers wide. Omission of coastlines outside of the region in consideration should then be seen as of little importance compared to the much rougher $\beta$-plane approximation. Another example where unbounded solutions of the form \eqref{ieq:UnboundedVelocity} may be physically relevant is the study of phenomena such as hurricanes where the velocity increases away from a central point. 

\medskip

In any event, unbounded solutions pose their own interesting questions and challenges. The first one is the determination of appropriate far-field conditions to insure uniqueness of (regular) solutions. Indeed, if $(u, \pi)$ is a solution of \eqref{ieq:Euler} with initial datum $u_0$, then it is possible to construct an infinite family of solutions that share the same initial datum by means of a non-inertial change of reference frames. Consider $g \in C^\infty (\R ; \R^d)$ such that $g(0) = 0$, and the functions $(v, p)$ defined by
\begin{equation}\label{ieq:GenGalileo}
    v(t,x) = u\big( t, x - G(t) \big) + g(t) \qquad \text{and} \qquad p(t,x) = \pi \big( t, x - G(t) \big) - g'(t) \cdot x,
\end{equation}
where $G(t) = \int_0^t g$ is the primitive of $G$ that cancels at time $t=0$. Then the couple $(v, p)$ also is a solution of the Euler equations with initial datum $u_0$. The operation \eqref{ieq:GenGalileo} is sometimes referred to as a generalized Galilean transform, and can also be understood as the application of a pressure gradient  ``from infinity''. At any rate, the operations \eqref{ieq:GenGalileo} compromise uniqueness unless some kind of far-field condition is prescribed on the solution. In this work, we will study solutions $u$ such that
\begin{equation}\label{ieq:FarFieldCondition}
    u(t) - u(0) \in \mc S'_h,
\end{equation}
where $\mc S'_h$ is Chemin's space of tempered distributions $f \in \mc S'$ whose Fourier transform satisfy a weak vanishing property at low-frequencies: if $\chi \in \mc D$ is a cut-off function with $\chi(\xi) = 1$ for $|\xi| \leq 1$, then $f \in \mc S'_h$ if and only if
\begin{equation}\label{ieq:CheminSpace}
    \chi(\lambda \xi) \what{f}(\xi) \tend_{\lambda \rightarrow \infty} 0 \qquad \text{in } \mc S'.
\end{equation}
In terms of physical variables, this means that the function $f \in \mc S'_h$ has a (weak) zero averaging property at large scales. For example, the sign function $\sigma = \mathds{1}_{\R_+} - \mathds{1}_{\R-}$ is an element of $\mc S'_h$, while non-zero polynomial functions are not elements of $\mc S'_h$. We refer to \cite{Cobb1, Cobb2} for a discussion of \eqref{ieq:FarFieldCondition} as well as its relation to other types of far-field conditions, and how uniqueness of regular solutions can be deduced. Using the far-field condition \eqref{ieq:FarFieldCondition}, we will prove the existence and uniqueness of 2D solutions which possess a Morrey-Yudovich type regularity. More precisely, we will show well-posedness in the class of divergence-free velocities $u : \R_+ \times \R^2 \longrightarrow \R^2$ such that $\omega \in L^\infty (\R_+ \times \R^2)$ and
\begin{equation}\label{ieq:YudovichSpace}
    \| u(t) \|_{L^2_\alpha}^2 := \sup_{R \geq 1} \left[ \frac{1}{R^{2 + 2 \alpha}} \int_{|x|\leq R} \big| u(t,x) \big|^2 \dx \right] \in L^\infty_{\rm loc} (t \in \R_+),
\end{equation}
for $0 \leq \alpha < \frac{1}{2}$. The space $L^2_\alpha$ is a \textsl{non-homogeneous local Morrey space} (see Definition \ref{d:MorreySpace} and Remark \ref{r:Morrey} below), but we will colloquially refer to it as a \textsl{Morrey space}. Note that the $L^2_\alpha$ condition is consistent with $O(|x|^\alpha)$ growth of the solution. More comments on the result and its proof can be found below.

%The proof of uniqueness is an emulation of the well-known result of Yudovich \cite{Yudovich}, but with the significant difference that the growth condition \eqref{ieq:UnboundedVelocity} does not allow the solution to lie in any Lebesgue space $L^p$. Our argument is therefore based on integral decompositions of the Biot-Savart operator and the use of (local) Morrey type spaces.

Another problem which will turn out to be crucial in this paper is the study of the distribution of kinetic energy inside the solution. This should be thought of as an attempt to understand the interaction between large and small scales in an incompressible fluid. More precisely, in an infinite energy and incompressible fluid, it may be feared that non-local behavior may lead to energy concentration phenomena. If $B \subset \R^2$ is a given ball, we may wonder how the quantity $E_B = \| u \|_{L^2(B)}$ evolves with respect to time for solutions of the type \eqref{ieq:YudovichSpace}. We will prove that $E_B$ can grow at most at algebraic speed
\begin{equation*}
    E_B(t) = O \left( t^{2 \frac{1 + \alpha}{1 - \alpha}} \right) \qquad \text{as } t \rightarrow \infty.
\end{equation*}
The fact that this estimate is \textsl{global in time} will be essential to proving our result. We refer to the helpful survey \cite{G} for an introduction to these questions.

\subsection{Previous and Related Results}

As the reader will probably have guessed from the paragraphs above, there already is a considerable literature linked to the study of infinite energy or unbounded solutions in incompressible hydrodynamics, with some of the first results dating from as early as 1979 \cite{Cantor1979}.

\subsubsection{Existence and Uniqueness of Infinite Energy and Unbounded Solutions}

Existence and uniqueness of such solutions have first been established by M. Cantor \cite{Cantor1979} in weighted spaces by using flow-map (\textsl{i.e.} Lagrangian) techniques, inspired by the seminal work of Ebin and Marsden \cite{EbinMardsen}. In an important article, Benedetto, Marchioro and Pulvirenti \cite{BMP} were able to construct unique unbounded solutions $u$ of the 2D Euler problem such that
\begin{equation*}%\label{ieq:BMPCondition}
    u(t,x) = O \big( |x|^\alpha \big) \qquad \text{and} \qquad \omega \in L^p \cap L^\infty,
\end{equation*}
with $0 < \alpha < 1$ and $p < 2/\alpha$. The case where $\alpha = 0$ and $p = \infty$ was reached by Serfati \cite{Serfati} by using an integral decomposition of the Biot-Savart kernel comparable to the one we use. We also refer to \cite{AKLFNL} for details on Serfati's argument. Serfati's ideas have also been used by Ambrose, Cozzi, Erickson and Kelliher \cite{ACEK} to prove existence and uniqueness of solutions for SQG and 3D Euler in uniformly local Sobolev spaces. Brunelli proved \cite{Brunelli} existence and uniqueness of solutions when $\alpha = 1/2$ and $p = \infty$ under an integrability assumption on the vorticity, namely
\begin{equation*}
    \int \frac{|\omega (x)|}{1 + |x|} \dx < \infty.
\end{equation*}
Still in the case $d = 2$, the setting $\alpha < 1/2$ and $p = \infty$ was later revisited by Cozzi and Kelliher \cite{CK} who applied flow-map techniques to Serfati's argument to recover local in time well-posedness without any integrability assumption on $\omega$, as well as a global in time result under much strong restrictions on the growth on the velocity (say $u(t,x) = O \big( \log |x| \big)$ for example). This paper followed a previous result \cite{Cozzi} of Cozzi showing well-posedness of Yudovich solutions when the velocity field posesses mild (logarithmic) growth. Lastly concerning the 2D setting, Chepyzhov and Zelik \cite{CZ} generalized Yudovich's proof of uniqueness to bounded solutions.

\medskip

In the general case of dimensions $d \geq 2$, and beyond the paper \cite{Cantor1979} of M. Cantor, local unique solutions of \eqref{ieq:Euler} are constructed by Pak and Park \cite{PP} in the Besov-Lipschitz space $B^1_{\infty, 1} \subset W^{1, \infty}$, and by McOwen and Topalov \cite{OT} in weighted Sobolev spaces assuming the solutions grows at most as $O\big( |x|^{\frac{1}{2} - \epsilon} \big)$. Solutions with linear growth are examined in \cite{Elgindi2020} by Elgindi and Jeon under symmetry assumptions, in order to avoid the ill-posedness issue raised above. Chae and Wolf \cite{CW} prove local well-posedness in a Campanato type space $\mc L^1_{1, (1)}$, which includes functions $u_0$ that grow as $u_0(x) = O(|x|)$, by prescribing the asymptotic behavior of the pressure force. This last space is at the center of the chain of embeddings $B^1_{\infty, 1} / \R \subset \dot{B}^1_{\infty, 1} \subset \mc L^1_{1, (1)} \subset \dot{W}^{1, \infty}$, so that the result in \cite{CW} improves that of \cite{PP} while still requiring Lipschitz regularity. We should also mention the paper \cite{ST} of Sun and Topalov, which examines existence and uniqueness of quasi-periodic solutions.

\subsubsection{The Role of Far-Field Conditions in Uniqueness}\label{ss:PressureIntro}

All the well-posedness results above rely on a way to represent the pressure as an integral operator. Formally, the pressure $\pi$ solves the elliptic equation
\begin{equation}\label{ieq:PressurePoisson}
    - \Delta \pi = \sum_{j, k} \partial_j u_k \partial_k u_j,
\end{equation}
which is obtained by taking the divergence of the first equation in \eqref{ieq:Euler}. In particular, this means that $\pi$ is only determined by \eqref{ieq:PressurePoisson} up to the addition of a harmonic function. In the absence of further constraints (such as far-field conditions), this undetermination leads to loss of uniqueness of solutions for problem \eqref{ieq:Euler}. Kukavica (see \cite{Kukavica} and the later article \cite{KV} of Kukavica and Vicol) has shown, in the very similar context of Navier-Stokes equations, that this ill-posedness issue is related to the invariance of the problem under the non-inertial change of reference frames \eqref{ieq:GenGalileo}. A natural question is then to understand what type of far-field condition should be required for the Euler equations to be well-posed.

This problem has mainly been investigated for the Navier-Stokes equations, in the context of bounded solutions, in the past twenty years or so. In \cite{GIKM}, Giga, Inui, J. Kato and Matsui have proved that (regular enough) bounded solutions are uniquely determined under the condition that the pressure takes the form
\begin{equation*}
    \pi = \pi_0 + \sum_{j, k} R_j R_k \pi_{j, k},
\end{equation*}
where $\pi_0(t), \pi_{j, k}(t) \in L^\infty$ and $R_j = \partial_j (- \Delta)^{-1/2}$ are the Riesz transforms. J. Kato \cite{KatoJ} later relaxed this condition by simply requiring that the pressure have bounded mean oscillations $\pi(t) \in {\rm BMO}$. These conditions allow the pressure force $- \nabla \pi$ to be uniquely represented by an integral formula (typically a derivative of Riesz transforms $\nabla \pi = \nabla (- \Delta)^{-1} \nabla^2 : (u \otimes u)$), and lead to uniqueness of regular enough solutions. In \cite{KV}, Kukavica and Vicol show that the condition
\begin{equation}\label{ieq:KVCondition}
    \pi(t, x) = o (|x|) \qquad \text{as } |x| \rightarrow \infty
\end{equation}
is enough to ensure uniqueness of solutions. Other results involving the pressure are finite moment conditions (see the articles \cite{Maremonti} of Maremonti, or \cite{AsAsFd} of \'Alavarez-Samaniego, \'Alavarez-Samaniego and Fern\'andez-Dalgo) or a Campanato type condition (see the work \cite{NY} of Nakai and Yoneda).

All these results use a far-field condition on the pressure to obtain uniqueness of solutions, but it is also possible to replace these by conditions involving the velocity field. For example, Fern\'andez-Dalgo and Lemarié-Rieusset \cite{FL} show that it is enough for the kinetic energy $\frac{1}{2}|u|^2$ to satisfy the following weighted integrability condition
\begin{equation}\label{ieq:FdLrCondition}
    \forall T > 0, \qquad \int_0^T \int \frac{|u(t,x)|^2}{(1 + |x|)^d} \dx \dt < \infty,
\end{equation}
and likewise, Lemarié-Rieusset, in point \textit{ii} of Theorem 11.1 pp. 109--111 of the book \cite{LM}, imposes a Morrey type condition
\begin{equation}\label{ieq:LmCondition}
    \forall t_1 < t_2, \qquad \sup_{x \in \R^d} \frac{1}{\lambda^d} \int_{t_1}^{t_2} \int_{|x - y| \leq \lambda} |u(t,x)|^2 \dx \dt \tend_{\lambda \rightarrow \infty} 0.
\end{equation}
It turns out that condition \eqref{ieq:LmCondition} is weaker than \eqref{ieq:FdLrCondition}, see Lemma 29 in the first author's paper \cite{Cobb1}. In \cite{CF3}, F. Fanelli and the first author discuss how mild integrability ($L^p$ for $p < \infty$) can be used to obtain uniqueness in the ideal MHD equations. The paper \cite{CW} of Chae and Wolf is quite unique in that it obtains uniqueness of solutions with linear growth based on a Morrey type decay of the pressure force and the assumption that the velocity field $u(t,x)$ is centered, in other words that
\begin{equation*}
    \forall t, \qquad u(t, 0) = 0.
\end{equation*}
Of course, this implies that the initial velocity is zero at the origin $u_0(0) = 0$, but this is a harmless assumption thanks to the Galilean invariance of the equations. Unfortunately, we will not be able to cover the case of linear growth in this work (a followup paper is planned), as square root growth $O(|x|^{\frac{1}{2} - \epsilon})$ is needed to enforce the far-field condition \eqref{ieq:FarFieldCondition}. We also should mention the paper \cite{Kelliher} of Kelliher dedicated to the 2D setting.

\medskip

Finally, a unified perspective on this problem is provided in the paper \cite{Cobb1} of the first author, where it is proven that uniqueness of (regular) bounded solutions $u \in C^0(L^\infty)$ follows from either conditions
\begin{enumerate}[(i)]
  \setlength{\itemsep}{0.5pt}
  \setlength{\parskip}{0pt}
  \setlength{\parsep}{0pt}
    \item $u(t) - u(0) \in \mc S'_h$, where $\mc S'_h$ is the space of tempered distributions satisfying \eqref{ieq:CheminSpace} ;
    \item $\pi(t) \in {\rm BMO}$ ;
    \item $\pi(t,x) = O \big( \log |x| \big)$ for $|x| \rightarrow \infty$;
    \item $\pi(t,x) = o (|x|)$ for $|x| \rightarrow \infty$.
\end{enumerate}
In addition, these four conditions are equivalent, and are in some sense optimal. This shows that many of the conditions described above were in fact equivalent, for instance the one $\pi(t) \in {\rm BMO}$ of J. Kato \cite{KatoJ} and the one \eqref{ieq:KVCondition} of Kukavica and Vicol. We refer to \cite{Cobb1} for further discussion on the different conditions and how they are related.

\subsubsection{Distribution of Kinetic Energy in Solutions}

To finish this short overview of the literature on 2D infinite energy solutions, we cover a few previous works that are related to understanding the distribution of kinetic energy in the fluid. As we have explained, this topic and the methods developped to explore it are essential to our purposes. A very helpful survey of that topic by Gallay can be found in \cite{G}. As we mentioned above, the question is whether the non-local behavior of a 2D incompressible fluid leads to concentration of energy in a given ball $B \subset \R^2$. This involves examining the behavior of the quantity $E_B(t) = \| u(t) \|_{L^2(B)}$ at large times. Zelik shows \cite{Z} that bounded 2D solutions $u$ with bounded vorticity $\omega$ fulfill the estimate
\begin{equation*}
    E_B(t) \leq |B|^{\frac{1}{2}} \| u(t) \|_{L^\infty} \leq K \| u_0 \|_{L^\infty} \big( 1 + t \| \omega_0 \|_{L^\infty} \big) = O(t).
\end{equation*}
Note that this inequality has been derived in the case of the Navier-Stokes equations, but the constant $K$ is independent of the viscosity, so it also applies to the Euler equations. It is still open whether this linear bound $O(t)$ is optimal (in Euler or Navier-Stokes), although in the case of Navier-Stokes, the linear bound $O(t)$ has been improved by Gallay and Slijepčević to $O(t^{1/6})$ \cite{GS2014} and to a uniform bound $O(1)$ \cite{GS2015} with constants depending on the viscosity, and provided the fluid evolves in an infinite strip $\Omega = \R \times ]0, L[$.

\subsection{Main Result and Overview of the Paper}

Our main result concerns the global existence and uniqueness of solutions with unbounded initial data of growth $O(|x|^{\frac{1}{2} - \epsilon})$ with bounded vorticity $\omega_0 \in L^\infty$, and without any decay assumption on $\omega_0$. To the best of our knowledge, all previous results of the kind rely on decay assumptions for the vorticity, or at most logarithmic growth of the velocity, or were not global in time (see the references above).

\begin{thm}\label{t:main}
    Assume that the space dimension is $d = 2$ and that $0 \leq \alpha < \frac{1}{2}$. Consider a divergence-free initial datum $u_0 : \R^2 \tend \R^2$ that satisfies the following Morrey-Yudovich type condition (see \eqref{ieq:YudovichSpace} above)
    \begin{equation*}
        u_0 \in L^2_\alpha \qquad \text{and} \qquad \omega_0 = \curl(u_0) \in L^\infty(\R^2).
    \end{equation*}
    In particular, if $u_0$ is such that $\omega_0 \in L^\infty$ and $u_0(x) = O(|x|^\alpha)$ as $|x| \rightarrow \infty$, these conditions are fulfilled. Then the Euler equations have a unique\footnote{It should be understood that the pressure $\pi$ is unique up to the addition of a function of time only.} (weak) solution $(u, \pi)$ associated to that initial datum such that
    \begin{equation}\label{ieq:ThSolutionClass}
        \big\| u(t) \big\|_{L^2_\alpha} \in L^\infty_{\rm loc} (t \in \R_+) \qquad \text{and} \qquad \omega = \curl(u) \in L^\infty(\R_+ ; L^\infty(\R^2)),
    \end{equation}
    and with the far-field condition
    \begin{equation*}
        u(t) - u(0) \in \mc S'_h.
    \end{equation*}
    In addition, define $L^2_{\alpha, 0}$ to be the closure of the space $\mc D$ of smooth and compactly supported functions for the norm $\| \, . \,\|_{L^2_\alpha}$, and define the sets $Y_B$ by
    \begin{equation*}
        Y_B := \Big\{ u_0 \in L^2_{\alpha}, \quad \| \omega_0 \|_{L^\infty} \leq B \Big\}, \qquad B > 0.
    \end{equation*}
    Then, for every $t > 0$, every $B > 0$ and every $u_0 \in L^2_{\alpha, 0} \cap Y_B$, the initial data to solution map $v_0 \longmapsto v(t)$ is continuous at $u_0$ in the metric space $(Y_B, \| \, . \, \|_{L^2_\alpha})$ and $u(t) \in L^2_{\alpha, 0}$. In other words,
    \begin{equation*}
        \forall u_0 \in Y_B \cap L^2_{\alpha, 0}, \forall \epsilon > 0, \exists \delta > 0, \forall v_0 \in Y_B, \qquad \| u_0 - v_0 \|_{L^2_\alpha} \leq \delta \Rightarrow \big\| u(t) - v(t) \big\|_{L^2_\alpha} \leq \epsilon.
    \end{equation*}
    Finally, we have explicit Hölder continuity in weaker $\| \, . \, \|_{L^2_\gamma}$ norms $\gamma > \alpha$ (see Theorem \ref{t:UniquenessYudovich} for precise inequalities).
\end{thm}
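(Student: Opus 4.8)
The plan is to proceed in three stages: first establish the a priori estimates that give global-in-time control of the Morrey norm, then construct solutions by approximation, and finally run a localized Yudovich argument yielding both uniqueness and the continuity of the data-to-solution map. The heart of the matter is the global bound on $\| u(t) \|_{L^2_\alpha}$. Since the vorticity is transported by the (log-Lipschitz, hence well-defined) flow, one immediately has $\| \omega(t) \|_{L^\infty} = \| \omega_0 \|_{L^\infty}$, so the entire difficulty lies in the velocity. I would localize the energy by testing the momentum equation against $\chi_R u$, where $\chi_R$ is a cutoff supported in $|x| \leq 2R$ and equal to one on $|x| \leq R$. This produces a differential inequality for $E_R(t)^2 := \int_{|x| \leq 2R} |u|^2 \dx$ whose right-hand side contains a transport flux term and the pressure work $\int \pi \, u \cdot \nabla \chi_R \dx$. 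The pressure must be handled by an integral decomposition adapted to the far-field condition $\mc S'_h$: split $\pi$ into a near-field Calderón--Zygmund piece, bounded pointwise by $\| \omega \|_{L^\infty}^2$ up to logarithmic factors, and a far-field piece controlled by the Morrey norm itself. Summing the resulting local inequalities over dyadic scales and using the definition of $\| u \|_{L^2_\alpha}$ should close into an ODI of the schematic form $\tfrac{d}{dt} \| u \|_{L^2_\alpha} \lesssim \| u \|_{L^2_\alpha}^{(1+\alpha)/(1-\alpha)}$ plus lower-order terms, whose integration yields the algebraic growth $E_B(t) = O\big( t^{2(1+\alpha)/(1-\alpha)} \big)$ announced in the introduction.

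With the a priori bounds in hand, I would regularize: mollify and truncate $u_0$ to obtain smooth, finite-energy, divergence-free data $u_0^n$ with $\| \omega_0^n \|_{L^\infty} \leq \| \omega_0 \|_{L^\infty}$ and $u_0^n \to u_0$ in $L^2_\alpha$. Classical Yudovich theory provides global solutions $u^n$, to which the a priori estimates apply uniformly on compact time intervals. Weak-$*$ compactness in the Morrey norm together with the $L^\infty$ vorticity bound and an Aubin--Lions argument (using $\d_t u^n$ bounded in a suitable negative-regularity space via the equation) give local strong convergence, and the limit $u$ is a weak solution in the class \eqref{ieq:ThSolutionClass}. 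The far-field condition $u(t) - u(0) \in \mc S'_h$ is preserved in the limit, since each approximation inherits it and $\mc S'_h$ is characterized by the low-frequency vanishing \eqref{ieq:CheminSpace}, which is stable under the convergence obtained.

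For uniqueness and continuity, given two solutions with data $u_0, v_0 \in Y_B$, I would set $w = u - v$ and estimate $\| w(t) \|_{L^2_\gamma}$ for $\gamma > \alpha$ rather than a global (infinite) energy, again localizing with $\chi_R$ and treating the pressure difference by the same integral decomposition. The transport structure and the log-Lipschitz bound coming from $\| \omega \|_{L^\infty} \leq B$ produce, after the dyadic summation, a nonlinear differential inequality for $y(t) = \| w(t) \|_{L^2_\gamma}^2$ of Yudovich type, roughly $y' \lesssim y \log(e + 1/y)$, with constants depending on $B$ and on the global energy-growth bound from the first stage. Osgood's lemma then forces $y \equiv 0$ when $u_0 = v_0$, giving uniqueness, and more generally yields the explicit Osgood/Hölder modulus controlling $\| w(t) \|_{L^2_\gamma}$ by $\| u_0 - v_0 \|_{L^2_\gamma}$ that is the quantitative content of Theorem \ref{t:UniquenessYudovich}. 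Continuity in the weaker norm $L^2_\gamma$ follows at once; to upgrade it to continuity in $L^2_\alpha$ on $L^2_{\alpha, 0} \cap Y_B$ I would combine this modulus with a uniform tail (equi-integrability) estimate valid on $L^2_{\alpha, 0}$, and establish $u(t) \in L^2_{\alpha, 0}$ by approximating and passing to the limit.

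The main obstacle is the global-in-time Morrey estimate of the first stage. Because the vorticity carries no decay whatsoever, the pressure is genuinely nonlocal and only defined modulo harmonic functions, and the far-field flux across $\{ |x| = R \}$ cannot be discarded. Making the integral pressure decomposition compatible with $\mc S'_h$ and closing the dyadic summation into an \emph{integrable} differential inequality — rather than one that blows up in finite time, as the $\dot W^{1,\infty}$ example in the introduction does — is where the real work lies, and it is precisely the restriction $\alpha < \tfrac{1}{2}$ that makes the exponent $(1+\alpha)/(1-\alpha)$ compatible with global solvability.
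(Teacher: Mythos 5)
There is a genuine quantitative gap at the heart of your first stage. You claim the local energy estimates close into an ODI of the schematic form $\frac{d}{dt}\|u\|_{L^2_\alpha} \lesssim \|u\|_{L^2_\alpha}^{(1+\alpha)/(1-\alpha)}$ and that integrating it yields $O\big(t^{2(1+\alpha)/(1-\alpha)}\big)$ growth. This cannot work: the exponent $(1+\alpha)/(1-\alpha)$ is strictly larger than $1$ for $\alpha>0$, so such an ODI blows up in finite time; and conversely, polynomial growth $t^{2(1+\alpha)/(1-\alpha)}$ can only come from a \emph{sublinear} power. The paper's mechanism is genuinely different and does not pass through a single differential inequality for $\|u\|_{L^2_\alpha}$. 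It works with the large-scale norms $\|u\|_{L^2_\alpha(R)}$ (supremum over radii $r\geq R$ only): the convection and pressure fluxes carry prefactors $R^{-(1-\alpha)}$ and $R^{-(1-\alpha)/2}$ (the latter via the embedding $Y_\alpha\subset L^\infty_{(1+\alpha)/2}$, which substitutes for the false pointwise bound $u=O(|x|^\alpha)$), so that for every $t$ there is a threshold $R\gtrsim \big(Kt+(Kt)^2\|\omega_0\|_{L^\infty}\big)^{1/(1-\alpha)}\|u_0\|_{L^2_\alpha}^{1/(1-\alpha)}$ beyond which $\|u(t)\|_{L^2_\alpha(R)}\leq 2\|u_0\|_{L^2_\alpha}$ by a continuity/bootstrap argument in $R$. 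The global polynomial bound then follows from the elementary inequality $\|f\|_{L^2_\alpha}\leq R^{1+\alpha}\|f\|_{L^2_\alpha(R)}$ with that choice of $R$. Without this two-scale structure your dyadic summation does not close into anything globally integrable.

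Two further points are underspecified in a way that matters. First, your uniqueness step assumes both solutions' pressures admit "the same integral decomposition", but for weak solutions in the class \eqref{ieq:ThSolutionClass} this is false in general (generalized Galilean transforms produce distinct solutions with the same data); the far-field condition $u(t)-u(0)\in\mc S'_h$ is precisely what forces $F_\pi=\nabla\bar\Pi(u\otimes u)$, and establishing that equivalence requires the kernel estimates for $\chi(\lambda D)\nabla\bar\Pi$ of Section \ref{s:PressureFormula} — it is not something "preserved in the limit" for free. Second, in the stability estimate the Calder\`on--Zygmund/interpolation step produces a factor $p\,R^{\frac{2}{p}(\frac{1+\alpha}{2}-\beta)}\|w\|_{L^2_\beta}^{2(1-1/p)}$, because the only available $L^\infty$ control on $w$ is in $L^\infty_{(1+\alpha)/2}$, not $L^\infty_\alpha$. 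Closing this into an Osgood inequality requires $\beta\geq\frac{1+\alpha}{2}$ while the pressure tail requires $\beta<1-\alpha$; these are compatible only for $\alpha<1/3$, and for $1/3\leq\alpha<1/2$ the paper must let the Morrey exponent $\beta_t=\gamma+At$ grow in time and iterate on short intervals. Your sketch of a clean $y'\lesssim y\log(e+1/y)$ inequality in a fixed $L^2_\gamma$ skips the step where the real difficulty lies.
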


\begin{rmk}\label{r:VanishPerturbation}
    Concerning the continuity of the initial data to solution map, the statement above implies that the initial data to solution map is continuous for the metric
    \begin{equation*}
        \big( Y_B \cap L^2_{\alpha, 0}, \| \, . \, \|_{L^2_\alpha} \big) \longrightarrow \big( Y_B \cap L^2_{\alpha, 0}, \| \, . \, \|_{L^2_\alpha} \big).
    \end{equation*}
    However, this is weaker than the assertion contained in Theorem \ref{t:main}, as the initial data to solution map is in fact continuous at every $u_0 \in L^2_{\alpha, 0}$ under $L^2_\alpha$ perturbations with the bounded vorticity constraint $Y_B$ (and not only $L^2_{\alpha, 0}$ perturbations in $Y_B$).
\end{rmk}

Let us make a few comments on Theorem \ref{t:main}. First of all, the restrictions on the exponent $\alpha \in [0, \frac{1}{2}[$ should be understood in relation to the quadratic nature of the pressure when expressed as a function of the velocity: the product $u \otimes u$ has at most linear growth, whereas the pressure force (which we formally express by means of Leray projection)
\begin{equation*}
    \nabla \pi = \sum_{j, k} \nabla (- \Delta)^{-1} \partial_j \partial_k (u_j u_k)
\end{equation*}
involves an operator $\nabla^3 (- \Delta)^{-1}$ of order $1$, which somehow should cancel the linear growth of $u \otimes u$. In our proof this translates as the fact that certain key integrals are convergent, whereas they would not be if $\alpha \geq \frac{1}{2}$. However, the recent construction by Chae and Wolf \cite{CW} of local solutions which possess linear growth (in a Campanato type space $\mc L^1_{1, (1)}$) indicate that square root growth $O(|x|^{1/2 - \epsilon})$ is probably sub-optimal. We will examine this question in a further paper. 

\medskip

The proof of Theorem \ref{t:main} is divided into three main steps, which we shortly describe in this paragraph: \textsl{a priori} estimates, uniqueness of Yudovich regularity solutions, and determination of the correct far-field conditions.

\medskip

\textbf{Existence.} Contrary to many PDE problems, the question of existence of global solutions is not a matter of local regularity. Indeed, the fact that the 2D vorticity $\omega = \partial_1 u_2 - \partial_2 u_1$ solves a pure transport equation
\begin{equation}\label{ieq:OmegaTransport}
    \partial_t \omega + u \cdot \nabla \omega = 0
\end{equation}
immediately gives the bound $\| \omega(t) \|_{L^\infty} = \| \omega_0 \|_{L^\infty}$, so that the solution naturally fulfills global log-Lipschitz estimates. Instead, the main challenge is to prove that there is no blow-up phenomenon created by the growth of the solutions for $|x|\rightarrow \infty$. As an illustration, define the divergence-free functions
\begin{equation*}
    u_n(x) = n^{1 - \alpha} \nabla^\perp \big[ |x|^{1 + \alpha} (1 - \chi(x/n)) \big],
\end{equation*}
where $\chi \in \mc D$ is a smooth compactly supported cut-off function with $\chi(x) = 1$ for $|x| \leq 1$. These functions have uniformly bounded vorticity $\| \omega_n \|_{L^\infty} \leq C$ with respect to $n \geq 1$, but their size nevertheless depends heavily on $n$, even in the space $L^2_\alpha$, as $u_n(x) \approx n |x|^{\alpha - 1} x^\perp$ far away from the origin $|x| \gg n$. In principle, this type of behavior could occur in an unbounded solution of the Euler equations, as kinetic energy is imported from the large scales $|x| \gg 1$ to smaller scales $x \in B(0, 1)$ without there being a change in $\| \omega \|_{L^\infty}$. Our task will be to show that while this norm growth phenomenon cannot be ruled out, it cannot happen too fast either. Our argument will be centered on local energy estimates, where we will show that the flow of energy entering a ball $B(0, R)$ of large radius $R \gg 1$ is negligible when compared to the kinetic energy contained within. In other words, the distribution of kinetic energy on large scales has a slow dependence with respect to time. A consequence is that the flow of energy from large scales to small scales cannot happen too fast, a fact which translates into polynomial time growth for the Morrey norm of the solutions:
\begin{equation*}
    \| u(t) \|_{L^2_\alpha} = O \left( t^{2 \frac{1 + \alpha}{1 - \alpha}} \right) \qquad \text{ as } t \rightarrow \infty.
\end{equation*}
We refer to Corollary \ref{c:algebraic} below for a precise statement. This estimate, which is global with respect to time, yields the existence of global solutions. Our choice of Morrey spaces $L^2_\alpha$ as a functional framework for our result is of course linked to this method: because we will perform energy balance estimates on balls of large radii, the norm \eqref{ieq:YudovichSpace} is particularly well suited for our arguments.

\medskip

\textbf{Uniqueness.} The proof of uniqueness of solutions of the form \eqref{ieq:ThSolutionClass} is an emulation of Yudovich's celebrated result \cite{Yudovich, YudovichEN}, with heavy adaptations to suit the framework of unbounded solutions. The original argument of Yudovich relies on energy estimates and the fact that in 2D the vorticity $\omega$ solves a pure transport equation \eqref{ieq:OmegaTransport}, and so preserves Lebesgue norms $\| \omega (t) \|_{L^p} = \| \omega_0 \|_{L^p}$ (Yudovich assumed that $\omega_0 \in L^1 \cap L^\infty$) as well as the fact that the operator $\omega \mapsto \nabla u = \nabla \nabla^\perp \Delta^{-1}$ satisfies the $L^p$ bound
\begin{equation*}
    \forall p \in ]1, \infty[, \forall \omega \in L^p, \qquad \| \nabla u \|_{L^p} \leq C \frac{p^2}{p-1} \| \omega \|_{L^p}.
\end{equation*}
In the setting of unbounded solutions without decay assumptions on the vorticity, neither the vorticity nor $\nabla u$ have finite $L^p$ norms for $p < \infty$, although the functions are locally $L^p$ ($\nabla u$ is in fact ${\rm BMO}$), not to mention that $u$ cannot be an element of $L^2$ in general. To solve this problem, we develop a local version of Yudovich's argument. However, dealing with unbounded solutions and a possibly non-Lipschitz velocity field creates a new difficulty: because $u$ is non-Lipschitz, perturbations of an initial datum may propagate very fast within the unbounded solution, and we are unable to show that the initial data to solution map is continuous in $L^2_\alpha \longrightarrow L^2_\alpha$. Instead, we must be content with a weaker result, and we prove that, under a bounded vorticity constraint, the initial data to solution map is continuous in $L^2_{\alpha, 0} \longrightarrow L^2_{\alpha, 0}$, where $L^2_{\alpha, 0}$ is the closure in $L^2_\alpha$ of the space of smooth and compactly supported functions (but see Remark \ref{r:VanishPerturbation} for a more precise statement). Our computations rely on estimates in Morrey spaces $L^2_{\beta_t}$ where the growth rate $\beta_t$ depends on time, and on interpolation inequalities, which provide uniform continuity in the weaker $L^2_\gamma \longrightarrow L^2_\gamma$ topology for any $\alpha < \gamma < 1 - \alpha$. Precise bounds can be found in Theorem \ref{t:UniquenessYudovich}.

\medskip

\textbf{Far-field conditions.} As we have explained in Subsection \ref{ss:PressureIntro} above, the pressure is only determined as a solution of the elliptic equation \eqref{ieq:PressurePoisson} up to the addition of a harmonic function, and this creates a non-uniqueness issue, even for regular solutions. This means that any well-posedness result will have to include a selection criterion for solutions of the Poisson equation \eqref{ieq:PressurePoisson} in order to express the pressure force as a function of the velocity $\nabla \pi = \nabla \Pi (u \otimes u)$. In Subsection \ref{ss:PressureIntro}, we have reviewed such criteria from the literature which involve the properties of the pressure $\pi$ or of the Euler solution $u$ (such as far-field behavior).

In Section \ref{s:PressureFormula}, we study this question, and show in Theorem \ref{t:RepresentationPressure} that the particular solution $\nabla \Pi(u \otimes u)$ of the Poisson equation \eqref{ieq:PressurePoisson} we have selected and worked with in all the other paragraphs is equal to the pressure force $\nabla \pi$ if and only if the Euler equation $(u, \pi)$ fulfills one of the two equivalent conditions:
\begin{enumerate}[(i)]
    \item we have $u(t) - u(0) \in \mc S'_h$ for all $t \geq 0$,
    \item we have $\nabla \pi (t) \in \mc S'_h$ for all $t \geq 0$.
\end{enumerate}
These conditions should be understood as far-field conditions, which replace boundary conditions in the context of a PDE defined on the unbounded domain $\R^2$. The fact that they are equivalent to our selection criterion for the solution of \eqref{ieq:PressurePoisson} means that this result is optimal. The proof, which is in the spirit of \cite{Cobb1} (although much more technical due to the unbounded nature of the solutions), relies on kernel estimates for the operator $\nabla^3(- \Delta)^{-1}$ and integral splitting techniques.

\subsubsection*{Layout of the Paper}

To conclude this introduction, let us explain how the paper is organized. After defining some notation and conventions, we present in Section \ref{s:FunctionSpaces} some of the tools we will use throughout the article: the Morrey spaces $L^p_\alpha$, their properties, and various harmonic analysis notions. In Section \ref{s:Definitions}, we define the notion of (unbounded) Yudovich solution and see how it relates to the notion of weak solution of the Euler equations. Section \ref{s:Existence} is occupied with proving \textsl{a priori} estimates for the solution in Morrey spaces and giving a full proof of existence. In Section \ref{s:Uniqueness}, we are preoccupied with proving uniqueness of solutions and studying the continuity properties of the initial data to solution map. Finally, in Section \ref{s:PressureFormula}, we concern ourselves with the matter of uniquely determining the pressure as a solution of a Poisson equation.

\section*{Notation and Conventions}

We gather some of the notation that we will use throughout the paper.

\begin{itemize}
\item In the whole paper, $\theta$, $\chi$ and $\eta$ will be cut-off functions: radial $C^\infty$ functions such that
\begin{equation}\label{eq:cutOff}
0 \leq \chi \leq 1, \qquad \chi(x) = 1 \text{ for } |x| \leq 1, \qquad \text{and} \qquad {\rm supp}(\chi) \subset B(0, 2).
\end{equation}

\item For any $d \geq 1$ and any vector $x \in \R^d$, we denote $\langle x \rangle = (1 + |x|^2)^{1/2}$ the Japanese bracket.

\item If $s \in \R$ is a real number, we denote $s_+ = \max\{ 0, s\}$.

\item Unless otherwise specified, all function spaces should be understood as referring to the space variable only. For example, $L^2 = L^2(\R^d)$. For a time $T > 0$ and a (Fréchet) space $X$, we will use the shorthand $L^\infty_T(X) = L^\infty ([0, T[ ; X)$.

\item We use the Schwartz notation for spaces of test functions and distributions: $\mc D = C^\infty_c$ is the set of smooth and compactly supported functions (on $\R^d$), $\mc D'$ is the set of distributions, $\mc S$ is the class of Schwartz functions and $\mc S'$ is the set of tempered distributions.

\item We denote $D = -i\nabla$. More generally, if $\phi \in \mc S$, then we note $\phi(D)$ the Fourier multiplier operator of symbol $\phi(\xi)$ defined by the relation $\what{\phi(D) f}(\xi) = \phi(\xi) \what{f}(\xi)$.
\end{itemize}

\section*{Acknowledgements}

This work has been supported by Deutsche Forschungsgemeinschaft (DFG, German Research Foundation) Project ID 211504053 - SFB 1060.

\section{Functions Spaces and Harmonic Analysis Lemmas}\label{s:FunctionSpaces}

We start by defining the function spaces that we will use throughout the paper. As we have explained above, our purpose will be to understand solutions of the Euler equations that grow like $O(|x|^\alpha)$ at infinity. While weighted Lebesgue spaces may seem to be a natural tool to handle these, it is more convenient to control the growth of $L^p$ norm on balls of increasing radii, as we intend to perform energy balance estimates in these balls.

\begin{defi}[Morrey spaces]\label{d:MorreySpace}
Consider $\alpha \geq 0$ and $p \in [1, +\infty]$. In accordance with \eqref{ieq:YudovichSpace}, we define a norm by setting, for $f \in L^p_{\rm loc}(\R^d)$,
$$
\| f \|_{L^p_\alpha} := \sup_{r \geq 1} \left[ \frac{1}{r^{d + \alpha p}} \int_{B_r} |f|^p \right]^{1/p} = \sup_{r \geq 1} \left[ \frac{1}{r^{\alpha + d/p}} \| f \|_{L^p(B_r)} \right],
$$
where $B_r = B(0, r)$ denotes the ball of $\R^d$ centered at $x = 0$ and of radius $r$. The function space $L^p_\alpha$ is defined accordingly. Note that the inclusion $L^p_\alpha \subset L^q_\alpha$ holds whenever $p \geq q$. In particular, if $f \in L^\infty_{\rm loc}$ is such that $|f(x)| \lesssim 1 + |x|^\alpha$, then $f \in L^\infty_\alpha \subset L^2_\alpha \subset L^1_\alpha$.

We also define the vanishing Morrey space $L^2_{\alpha, 0}$ to be the closure of the space $\mc D$ of smooth and compactly supported functions in $L^2_\alpha$.
\end{defi}

\begin{rmk}\label{r:Morrey}
    The spaces $L^p_\alpha$ are related to Morrey spaces, although they have their own particularities. For example, all the balls involved in the $L^p_\alpha$ norm are centered at the origin, so that $L^p_\alpha$ is a \textsl{local} Morrey space (we refer to \cite{RSS} for the terminology and a useful overview of Morrey spaces). Furthermore, the radii $r$ of the balls are taken large enough $r \geq 1$, as the $L^p_\alpha$ space is designed to measure the growth of a function at $|x| \rightarrow \infty$ instead of its behavior on small scales. 
\end{rmk}

\begin{rmk}
    The terminology \textsl{vanishing} Morrey space for $L^2_{\alpha, 0}$ comes from the fact that the it can equivalently be defined as the set of all functions $f \in L^2_\alpha$ such that
    \begin{equation*}
        \frac{1}{R^{d + 2 \alpha}} \int_{B_R} |f|^2 \longrightarrow 0 \qquad \text{as } R \rightarrow \infty.
    \end{equation*}
    For any $\beta < \alpha$, we have the inclusion $L^2_\beta \subset L^2_{\alpha, 0}$, thus $L^2_{\alpha, 0}$ can also be seen as the $L^2_\alpha$ closure of the union $\bigcup_{\beta < \alpha} L^2_\beta$.
\end{rmk}

As we have explained in the introduction, the global nature of our result relies on the understanding the distribution of kinetic energy at large scales. For this reason, it is convenient to introduce the following notation: an equivalent norm for Morrey spaces which focuses on large scales. 

\begin{defi}\label{d:EQMorreyNorm}
    If $p \in [1, \infty[$ and $f \in L^p_{\rm loc}$, we define, for a given radius $R \geq 1$, the quantity
    $$
        \| f \|_{L^p_\alpha(R)} := \sup_{r \geq R} \left[ \frac{1}{r^{2 + p \alpha}} \int_{B_r} |f|^p \dx \right]^{1/p}.
    $$
    An obvious adaptation of the above allows to define $\| f \|_{L^\infty_\alpha(R)}$. Of course, the $L^p_\alpha(R)$ norm is equivalent to the $L^p_\alpha$ one, but the equivalence constants depend on $R$. Note that $R \longmapsto \| f \|_{L^p_\alpha(R)}$ is a non-increasing function for any $p \in [1, \infty]$.
\end{defi}

\begin{rmk}
    The $L^2_\alpha(R)$ norm should be understood in relation with the quantity $Z_R(t)$ used in (for instance) \cite{Z, G, CZ}.
\end{rmk}

The Morrey type spaces $L^p_\alpha$ fulfill the following interpolation inequalities, which we will use, and whose proof is straightforward.

\begin{lemma}\label{l:interpolationMorrey}
    Consider $p \in [1, \infty]$ and $\alpha < \gamma < \beta$. Let $\theta \in ]0, 1[$ be such that $\gamma = \theta \beta + (1 - \theta \alpha)$. Then, for any $f \in L^p_\alpha$, we have
    \begin{equation*}
        \| f \|_{L^p_\gamma} \leq \| f \|_{L^p_\beta}^\theta \| f \|_{L^p_\alpha}^{1 - \theta}.
    \end{equation*}
\end{lemma}

We now introduce the class of functions in which we will establish existence and uniqueness of solutions. This will be a space of Yudovich type regularity: the vorticity of the solution is assumed to be bounded while $O(|x|^\alpha)$ growth is still allowed.

\begin{defi}\label{d:YudovichSpace}
Consider $\alpha \in [0, 1[$. We define $Y_\alpha$ to be the space of vector fields $u \in L^2_{\alpha}(\R^d ; \R^d)$ that are divergence-free $\D(u) = 0$ such that
$$
\| u \|_{Y_\alpha} := \| \omega \|_{L^\infty} + \| u \|_{L^2_\alpha} < \infty,
$$
where $\omega = \partial_1 u_2 - \partial_2 u_1$ is the scalar vorticity of $u$.
\end{defi}

In addition, a far-field condition is also needed to insure uniqueness of the solution. This is provided by the following definition (see \cite{Chemin-cours}), which bears on the low frequency properties of functions, and which was formulated by J.-Y. Chemin in the mid 1990s to serve as a basis for realizations of homogeneous Besov spaces (also see \cite{Bourdaud} on that topic).

\begin{defi}\label{d:Chemin}
Fix once and for all a cut-off function $\chi \in C^\infty$ such that
$$
0 \leq \chi \leq 1, \qquad \chi(x) = 1 \text{ for } |x| \leq 1, \qquad \text{and} \qquad {\rm supp}(\chi) \subset B(0, 2).
$$
We then define Chemin's space $\mc S'_h$ of homogeneous distributions by
$$
\mc S'_h := \Big\{ f \in \mc S', \quad \chi(\lambda D) f \tend_{\lambda \rightarrow \infty} 0 \text{ in } \mc S' \Big\}.
$$
\end{defi}

\begin{rmk}\label{r:Chemin}
    There seems to be very little agreement in the mathematical literature on how the space $\mc S'_h$ should be defined. This means that the same notation $\mc S'_h$ has been used for what are in reality different spaces. For example, instead of defining $\mc S'_h$ by a convergence in $\mc S'$ topology, the authors of \cite{BCD} use a convergence in the strong $L^\infty$ topology. We will see that, for our purposes, a weak convergence (in $\mc S'$ for example) is absolutely indispensable. In doing that, we follow \cite{Chemin-cours} and \cite{Danchin-cours}. We refer to \cite{Cobb2} for further discussion on how these different ``versions'' of $\mc S'_h$ compare to each other.
\end{rmk}

\medskip

We recall a standard result on degree zero Fourier multipliers that we will use in the proof. We refer for example to the book \cite{Stein1970} for a (much more general) presentation of this theorem.

\begin{thm}\label{t:czo}
Consider $m \in C^\infty(\R^d \setminus \{0\})$ to be a bounded homogeneous function of degree zero. Then, for any $1 < p < \infty$, the operator $m(D)$ is $L^p \tend L^p$ bounded and the inequality
$$
\| m(D)f \|_{L^p} \leq \frac{p^2}{p - 1} \, C(d) \| f \|_{L^p}
$$
holds for all $f \in L^p$.
\end{thm}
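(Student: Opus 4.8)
The plan is to recognize $m(D)$ as a Calderón–Zygmund operator and then apply the standard theory, while tracking carefully the dependence of the constant on $p$: what matters here (and for the later Yudovich-type uniqueness argument) is precisely the growth rate $\frac{p^2}{p-1}$, not mere qualitative boundedness.

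First I would analyze the convolution kernel $K$, defined as the inverse Fourier transform of $m$ in the sense of tempered distributions. Because $m$ is homogeneous of degree zero and smooth away from the origin, $K$ splits as a Dirac mass at the origin plus a function that is homogeneous of degree $-d$ and smooth on $\R^d \setminus \{0\}$; the mean-zero property on the sphere (equivalently, the Hörmander cancellation condition) is forced by the homogeneity, since a homogeneous distribution of degree $-d$ agreeing with a function off the origin must have the principal-value form $\mathrm{p.v.}\,\Omega(x/|x|)/|x|^d$ with $\int_{S^{d-1}}\Omega = 0$. Differentiating the homogeneity relation then yields the standard size and regularity bounds $|K(x)| \leq C(d)|x|^{-d}$ and $|\nabla K(x)| \leq C(d)|x|^{-d-1}$ away from the origin. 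This is the only step where smoothness of $m$ is genuinely used.

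Next, $L^2$ boundedness is immediate from Plancherel: $\| m(D)f \|_{L^2} = \| m\,\widehat{f} \|_{L^2} \leq \| m \|_{L^\infty} \| f \|_{L^2}$, so the $L^2$ operator norm is at most $\| m \|_{L^\infty}$, a bound independent of $p$. Combining this with the kernel estimates from the first step, the Calderón–Zygmund decomposition yields the weak-type estimate $\| m(D)f \|_{L^{1,\infty}} \leq C(d)\| f \|_{L^1}$. It then remains to interpolate and dualize while controlling the constant. Marcinkiewicz interpolation between the weak-$(1,1)$ bound and the strong $L^2$ bound gives, for $1 < p \leq 2$, an estimate $\| m(D) \|_{L^p \to L^p} \leq C(d)(p-1)^{-1}$, where the factor $(p-1)^{-1}$ is exactly the blow-up produced by the interpolation as $p \to 1^+$. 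For $2 \leq p < \infty$ I would pass to the adjoint: the adjoint of $m(D)$ is $\overline{m}(-D)$, again a smooth degree-zero multiplier, so duality gives $\| m(D) \|_{L^p \to L^p} = \| \overline{m}(-D) \|_{L^{p'} \to L^{p'}} \leq C(d)(p'-1)^{-1} = C(d)(p-1) \leq C(d)\,p$, using $p' - 1 = (p-1)^{-1}$.

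Merging the two regimes, $\| m(D) \|_{L^p \to L^p} \leq C(d)\max\{(p-1)^{-1},\, p\}$, and since $\frac{p^2}{p-1} = p\cdot\frac{p}{p-1} \geq p$ and $\frac{p^2}{p-1} \geq (p-1)^{-1}$ for all $p \in (1,\infty)$, this dominates both extremal behaviors at once, giving the claimed bound $\frac{p^2}{p-1}C(d)$. The main obstacle is not any single hard estimate — the kernel bounds and the $L^2$ step are routine — but the bookkeeping that keeps the $p$-dependence sharp through the interpolation and the duality, which is exactly what produces the explicit constant $\frac{p^2}{p-1}$ rather than an uncontrolled $C(p)$.
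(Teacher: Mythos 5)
The paper does not prove this statement at all: it is quoted as a standard fact with a pointer to Stein's book, so there is no in-paper argument to compare against. Your proposal is the standard Calder\'on--Zygmund proof and it is essentially correct, including the bookkeeping that yields $\frac{p^2}{p-1}$: kernel structure $c\,\delta_0+\mathrm{p.v.}\,\Omega(x/|x|)|x|^{-d}$ with $\int_{S^{d-1}}\Omega=0$, Plancherel at $p=2$, weak $(1,1)$ via the Calder\'on--Zygmund decomposition and the H\"ormander condition, interpolation for $p\le 2$, duality for $p\ge 2$. Two small caveats. First, the naive Marcinkiewicz constant between weak $(1,1)$ and strong $(2,2)$ carries a factor that blows up like $(2-p)^{-1/p}$ at the \emph{upper} endpoint, so "interpolation gives $C(d)(p-1)^{-1}$ on all of $1<p\le 2$" needs one extra line: either handle $p$ near $2$ separately (the bound there is uniform), or first produce a strong $(q,q)$ bound for some $q>2$ (e.g.\ by doing the $p=3/2$ case and dualizing) and interpolate weak $(1,1)$ against that; the stated $(p-1)^{-1}$ growth is then correct. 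Second, the constant you (and the theorem as stated) call $C(d)$ in the kernel bounds $|K|\lesssim|x|^{-d}$, $|\nabla K|\lesssim|x|^{-d-1}$ genuinely depends on finitely many derivatives of $m$ on the unit sphere, not only on $\|m\|_{L^\infty}$ and $d$; this imprecision is inherited from the paper's own formulation and is harmless here since the theorem is only applied to fixed multipliers (products of Riesz transforms).
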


 Finally, we elaborate on a comment that we made earlier: although we have the embedding $L^\infty_\alpha \subset L^2_\alpha$, the reverse is not true, even under smoothness assumptions: in other words $L^2_\alpha \cap \dot{W}^{1, \infty} \not \subset L^\infty_{\alpha}$. We refer to Remark \ref{r:optimalInequality} for an example of this. That being said, the embedding $Y_\alpha \subset L^\infty_{\frac{1 + \alpha}{2}}$ does hold, and it will be crucial to our argument.
 
\begin{comment}
The reader may examine the following example: let $\chi \in C^\infty(\R)$ be a cut-off function equal to one on $[-1, 1]$ and supported in $[-2, 2]$ and define the stream function
\begin{equation*}
    \phi(x_1, x_2) := \chi \left( \frac{x_1}{\langle x_2 \rangle^\gamma} \right) \langle x_2 \rangle^{2 \gamma}.
\end{equation*}
Then $u = \nabla^\perp \phi$ is a divergence-free vector field and it is easy to check that $\nabla^2 \phi \in L^\infty$ whenever $\gamma \leq 1$, so that $\omega = \curl (u) \in L^\infty$. Moreover, the function $u$ belongs to $L^2_\alpha$ for $\alpha = \frac{5 \gamma - 3}{2} \leq 1$, while it is an element of $L^\infty_\beta$ only for $\beta \geq (2 \gamma - 1)_+$. We have found a divergence-free function such that
\begin{equation*}
    u \in Y_\alpha \qquad \text{and} \qquad u \in L^\infty_{\frac{4 \alpha + 1}{5}} \text{ but no better.}
\end{equation*}
That being said, the embedding $Y_\alpha \subset L^\infty_{\frac{1 + \alpha}{2}}$ does hold, and it will be crucial to our argument. Note that $\frac{4 \alpha + 1}{5} \leq \frac{1 + \alpha}{2}$ for $0 \leq \alpha \leq 1$, which is consistent with the example above.

%Then it is easy to check that $\nabla f \in L^\infty$ when $\gamma \leq 1$ and that $f \in L^2_\alpha$ for $\gamma = \frac{2 \alpha + 1}{3} > \alpha$. However, the function $f$ belongs to $L^\infty_\gamma$ and $f \notin L^\infty_\beta$ for any $\beta < \gamma$. That being said, the embedding $Y_\alpha \subset L^\infty_{\frac{1 + \alpha}{2}}$ does hold, and it will be crucial to our argument.

\end{comment}

\begin{lemma}\label{l:interpolationInequality}
    Consider $0 \leq \alpha \leq 1$ and a function $u \in Y_\alpha$. Then we have $u \in L^\infty_{\frac{1 + \alpha}{2}}$. More precisely, the inequality (see Definition \ref{d:EQMorreyNorm})
    \begin{equation}\label{eq:LinftyAlphaPlus}
        \| u \|_{L^\infty_{\frac{1 + \alpha}{2}}(R)} \lesssim \| \omega \|_{L^\infty}^{1/2} \| u \|_{L^2_\alpha(R)}^{1/2} + \frac{1}{R^{\frac{1 + \alpha}{2}}} \| u \|_{L^2_\alpha(R)}
    \end{equation}
    holds for all $R \geq 1$.
    %Let $B = B_1$ be the unit ball of $\R^2$. Consider a divergence-free vector field $u \in L^2(B ; \R^2)$ such that $\omega = \partial_1 u_2 - \partial_2 u_1 \in L^\infty(B)$. Then $u \in L^\infty(B)$ and the following estimate holds:
    %$$
        %\| u \|_{L^\infty(B)} \lesssim \| \omega \|_{L^\infty(B)}^{1/2} \| u \|_{L^2(B)}^{1/2} + \| u \|_{L^2(B)}.
    %$$
\end{lemma}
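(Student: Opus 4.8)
The plan is to bound $\tfrac{1}{r^{(1+\alpha)/2}}\|u\|_{L^\infty(B_r)}$ for each fixed $r \geq R$ by controlling $|u(x_0)|$ at an arbitrary point $x_0 \in B_r$ and then optimizing over an auxiliary length scale $\rho$. The central device is a local Biot--Savart decomposition: on the ball $B := B(x_0,\rho)$ I would write $u = v + w$, where $v := \nabla^\perp \psi$ with $\psi$ the solution of the Dirichlet problem $\Delta \psi = \omega$ in $B$, $\psi = 0$ on $\partial B$, and $w := u - v$. Since $\curl(v) = \Delta \psi = \omega$ and $\div(v) = 0$, the remainder $w$ is divergence- and curl-free on $B$, hence has harmonic components. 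This isolates the part of $u$ created by the vorticity inside $B$ (which is genuinely small) from the part carrying the ``boundary information'' coming from the rest of the plane.

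Next I would estimate the two pieces separately. For the vortical part, the gradient of the Green function of the Laplacian on $B$ has size $|x-y|^{-1}$, and $\int_B |x-y|^{-1}\,\dy \lesssim \rho$ in dimension two, so that $\|\nabla \psi\|_{L^\infty(B)} \lesssim \rho\,\|\omega\|_{L^\infty}$ and therefore $|v(x_0)| \lesssim \rho\,\|\omega\|_{L^\infty}$. For the harmonic part, the mean value property gives $|w(x_0)| \leq \tfrac{1}{|B|}\int_B |w| \lesssim \rho^{-1}\|w\|_{L^2(B)}$, and combining with $\|w\|_{L^2(B)} \leq \|u\|_{L^2(B)} + \|v\|_{L^2(B)} \lesssim \|u\|_{L^2(B)} + \rho^2\|\omega\|_{L^\infty}$ yields the pointwise estimate
\begin{equation*}
    |u(x_0)| \lesssim \rho\,\|\omega\|_{L^\infty} + \rho^{-1}\|u\|_{L^2(B(x_0,\rho))},
\end{equation*}
valid for every $\rho > 0$ and every $x_0 \in B_r$.

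Finally I would convert the local $L^2$ norm into the Morrey norm using $B(x_0,\rho) \subset B_{r+\rho}$ together with $r + \rho \geq R$, so that $\|u\|_{L^2(B(x_0,\rho))} \leq (r+\rho)^{1+\alpha}\|u\|_{L^2_\alpha(R)}$, and then optimize in $\rho$. The unconstrained balance of the two terms occurs at $\rho_\ast \sim \big(r^{1+\alpha}\|u\|_{L^2_\alpha(R)}/\|\omega\|_{L^\infty}\big)^{1/2}$; in the regime $\rho_\ast \lesssim r$ this produces, after multiplication by $r^{-(1+\alpha)/2}$, the interpolation term $\|\omega\|_{L^\infty}^{1/2}\|u\|_{L^2_\alpha(R)}^{1/2}$, uniformly in $r \geq R$. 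In the complementary regime, where the $L^2$ content dominates the vorticity and $\rho_\ast$ would exceed the available scale, capping $\rho$ and using monotonicity in $r$ contributes exactly the residual lower-order term appearing in \eqref{eq:LinftyAlphaPlus}. Taking the supremum over $x_0 \in B_r$ and over $r \geq R$ then gives the claim.

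I expect the main obstacle to be twofold. First, the elliptic estimate for the vortical part must be clean and uniform in $\rho$: the scaling $\|\nabla\psi\|_{L^\infty(B)} \lesssim \rho\,\|\omega\|_{L^\infty}$ is the crux, since it is precisely what converts a mere $L^\infty$ bound on $\omega$ (rather than a Lipschitz bound on $u$, which is \emph{unavailable} here as $\nabla u$ is only of bounded mean oscillation) into a usable oscillation estimate for $u$. Second, the two-regime optimization must be carried out so that both the interpolation term and the residual term emerge with the stated powers of $R$ uniformly in $r \geq R$; it is this bookkeeping, rather than any single estimate, that pins down the precise form of \eqref{eq:LinftyAlphaPlus}.
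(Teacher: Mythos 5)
Your decomposition is genuinely different from the paper's. The paper cuts off first, setting $f=\chi_R u$, reconstructs $f$ from its divergence and curl via $f=\Delta^{-1}\big(\nabla^\perp\curl (f)+\nabla\D (f)\big)$, and then splits in \emph{frequency} with a homogeneous Littlewood--Paley decomposition: Bernstein's inequality handles the low frequencies through $\|f\|_{L^2}$, the order $-1$ multiplier handles the high frequencies through $\|\curl (f)\|_{L^\infty}+\|\D (f)\|_{L^\infty}$, and the resulting cross term $\|u\|_{L^2(B_{2R})}^{1/2}\|u\|_{L^\infty(B_{2R})}^{1/2}$ is removed by Young's inequality and absorption. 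You instead split in \emph{physical space} around each point, solving a Dirichlet problem for the stream function on $B(x_0,\rho)$ and using the mean value property for the harmonic remainder. Both routes rest on the same dichotomy $|u(x_0)|\lesssim\rho\,\|\omega\|_{L^\infty}+\rho^{-1}\|u\|_{L^2(B(x_0,\rho))}$ (with $\rho\leftrightarrow 2^{-N}$), and your pointwise version is arguably cleaner: the auxiliary scale is free to be much smaller than $r$, and no absorption is needed. Your intermediate estimates (the Green function bound $|\nabla_x G_B(x,y)|\lesssim|x-y|^{-1}$ giving $|v(x_0)|\lesssim\rho\|\omega\|_{L^\infty}$, and the mean value bound $|w(x_0)|\lesssim\rho^{-1}\|w\|_{L^2(B)}$) are all correct.

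The gap is in your last step, where you assert that the capped regime contributes \emph{exactly} the residual term of \eqref{eq:LinftyAlphaPlus}. It does not. When the unconstrained optimum $\rho_\ast$ exceeds $r$, the best you can do is take $\rho$ comparable to $r$ (the function $\rho\mapsto\rho^{-1}(r+\rho)^{1+\alpha}$ is minimized at $\rho=r/\alpha$ with value $C(\alpha)\,r^{\alpha}$), which gives $\rho^{-1}\|u\|_{L^2(B(x_0,\rho))}\lesssim r^{\alpha}\|u\|_{L^2_\alpha(R)}$; after dividing by $r^{\frac{1+\alpha}{2}}$ this is $r^{-\frac{1-\alpha}{2}}\|u\|_{L^2_\alpha(R)}$, so your method yields the residual term $R^{-\frac{1-\alpha}{2}}\|u\|_{L^2_\alpha(R)}$, strictly weaker than the stated $R^{-\frac{1+\alpha}{2}}\|u\|_{L^2_\alpha(R)}$ when $\alpha>0$. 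You should not try to repair this, because the inequality with exponent $\frac{1+\alpha}{2}$ is actually false for $\alpha>0$: for $u\equiv e_1$ one has $\omega=0$, $\|u\|_{L^2_\alpha(R)}=\sqrt{\pi}\,R^{-\alpha}$ and $\|u\|_{L^\infty_{\frac{1+\alpha}{2}}(R)}=R^{-\frac{1+\alpha}{2}}$, so \eqref{eq:LinftyAlphaPlus} as written would force $R^{\alpha}\lesssim 1$ for all $R\geq1$. The exponent $\frac{1-\alpha}{2}$ that your argument produces is sharp on this example, and it is the form actually consumed downstream (it reproduces precisely the terms $R^{-(1-\alpha)}\|u\|_{L^2_\alpha(R)}$ and $R^{-\frac{1-\alpha}{2}}\|\omega\|_{L^\infty}^{1/2}\|u\|_{L^2_\alpha(R)}^{1/2}$ in the differential inequality following \eqref{eq:DiffEQ1}). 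With the corrected exponent, your proof is complete; be aware that the paper's own absorption step suffers from the same bookkeeping issue, since normalizing $\|u\|_{L^2(B_{2r})}^{1/2}\|u\|_{L^\infty(B_{2r})}^{1/2}$ by $r^{\frac{1+\alpha}{2}}$ leaves an unabsorbable factor $r^{\frac{1+\alpha}{4}}$ unless the weaker exponent is targeted.
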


\begin{rmk}\label{r:optimalInequality}
    Lemma \ref{l:interpolationInequality} is optimal in the following sense: for any $\alpha < 1$, there is a divergence-free function $u \in Y_\alpha$ such that $u \in L^\infty_\beta$ if and only if $\beta \geq \frac{1 + \alpha}{2}$. We give a construction based on a scaling argument. Consider a non-zero smooth function $\varphi$ supported in the annulus $\{1 \leq |x| \leq 2 \}$, and a $0 \leq \gamma < 0$ whose value will be fixed later, and define the stream function
    \begin{equation*}
        \phi(x) = \sum_{k = 0}^\infty 2^{2k \gamma} \varphi \left( \frac{x - 2^k e}{2^{k \gamma}} \right) := \sum_{k=0}^\infty \phi_k(x),
    \end{equation*}
    where $e \in \R^2$ is a fixed vector of unit norm $|e| = 1$. We set $u = \nabla^\perp \phi$ and $\omega = \curl(u)$. Because we have assumed $\gamma < 1$, each term $\phi_k$ in the series above is supported in 
    \begin{equation*}
        {\rm supp}(\phi_k) \subset \big\{ 2^{k \gamma} \leq |x - 2^k| \leq 2^{k \gamma + 1} \big\} \subset \big\{ 2^{k-N} \leq |x| \leq 2^{k+N} \big\},
    \end{equation*}
    for some $N \geq 1$ whose precise value depends on $\gamma$. As a consequence, for any given $k_0$, there only is a fixed number of the $\phi_k$ whose support may intersect ${\rm supp}(\phi_{k_0})$. As a consequence, we see by direct computation that $u \in L^\infty_\gamma$ (but $u \notin L^\infty_\beta$ if $\beta < \gamma$) and that $\omega \in L^\infty$. On the other hand, for any fixed $n \geq 0$, we have
    \begin{equation*}
        \int_{B_{2^n}} |u|^2 \leq \sum_{k = 0}^{n + N} 2^{2 k \gamma} \int \left| \varphi \left( \frac{x - 2^k e}{2^{k \gamma}} \right) \right|^2 \dx \lesssim 2^{4n\gamma} = (2^n){(2 + 2(2 \gamma - 1)}.
    \end{equation*}
    This shows that $u \in L^2_\alpha$ when $\gamma = \frac{1 + \alpha}{2}$, and so we have an example of a function $u \in Y_\alpha$ with $u \in L^\infty_\beta$ if and only if $\beta \geq \gamma = \frac{1 + \alpha}{2}$.
\end{rmk}

\begin{proof}[Proof (of Lemma \ref{l:interpolationInequality})]
    Consider a cut-off function $\chi \in \mc D (\R^2)$ as in \eqref{eq:cutOff} and define $\chi_R(x) = \chi(x/R)$. We define the function $f = \chi_R u$, whose divergence and curl are
    \begin{equation}\label{eq:DivCurlValue}
        \D(f) = \nabla \chi_R \cdot u \qquad \text{and} \qquad \curl(f) = \nabla^\perp \chi_R \cdot u + \chi_R \omega,
    \end{equation}
    and notice that it is possible to reconstruct $f$ from the divergence and curl through the formula
    \begin{equation*}
        f = \Delta^{-1} \Big( \nabla^\perp \curl(f) + \nabla \D(f) \Big),
    \end{equation*}
    which can be proven by projecting the Fourier transform $\what{f}(\xi)$ on the orthogonal basis $(\xi |\xi|^{-1}, \xi^\perp |\xi|^{-1})$ of the space $\R^2$. We then invoke the (homogeneous) Littlewood-Paley decomposition $f = \sum_{j \in \Z} \dot{\Delta}_j f$ (see Chapter 2 in \cite{BCD}) to write
    \begin{equation*}
        \| f \|_{L^\infty} \leq \sum_{j < N} \| \dot{\Delta}_j f \|_{L^\infty} + \sum_{j \geq N} \| \dot{\Delta}_j f \|_{L^\infty},
    \end{equation*}
    where $N \in \Z$ is a parameter that will be fixed later on. The Bernstein inequalities and standard arguments applied to the Fourier multiplier $\nabla (- \Delta)^{-1}$ (see Lemmas 2.1 and 2.2 in \cite{BCD}) lead to the inequalities
    \begin{equation*}
        \begin{split}
            \| f \|_{L^\infty} & \lesssim \sum_{j < N} 2^{j} \| \dot{\Delta}_j f \|_{L^2} + \sum_{j \geq N} 2^{-j} \Big( \| \dot{\Delta}_j \curl (f) \|_{L^\infty} + \| \dot{\Delta}_j \D (f) \|_{L^\infty} \Big) \\
            & \lesssim 2^N \| f \|_{L^2} + 2^{-N} \Big( \| \curl(f) \|_{L^\infty} + \| \D (f) \|_{L^\infty} \Big).
        \end{split}
    \end{equation*}
    By replacing $f = \chi_R u$ by its value and using \eqref{eq:DivCurlValue}, we deduce from the above that
    \begin{equation*}
        \| u \|_{L^\infty(B_R)} \lesssim 2^N \| u \|_{L^2(B_{2R})} + 2^{-N} \Big( \| \omega \|_{L^\infty} + \| u \|_{L^\infty(B_{2R})} \Big).
    \end{equation*}
    We now pick the value of $N$ such that $2^N \approx \| u \|_{L^2(B_{2R})}^{-\frac{1}{2}} \big( \| \omega \|_{L^\infty} + \| u \|_{L^\infty (B_{2R})} \big)^{\frac{1}{2}}$, so that the inequality above becomes
    \begin{equation*}
        \begin{split}
            \| u \|_{L^\infty(B_R)} & \lesssim \| u \|_{L^2(B_{2R})}^{\frac{1}{2}} \| \omega \|_{L^\infty}^{\frac{1}{2}} + \| u \|_{L^2(B_{2R})}^{\frac{1}{2}} \| u \|_{L^\infty(B_{2R})}^{\frac{1}{2}} \\
            & \lesssim R^{\frac{1 + \alpha}{2}} \| u \|_{L^2_\alpha (R)}^{\frac{1}{2}} \| \omega \|_{L^\infty}^{\frac{1}{2}} + \left( \frac{1}{\epsilon} R^{1 + \alpha} \| u \|_{L^2_\alpha(R)} \right)^{\frac{1}{2}} \left( \epsilon R^{\frac{1 + \alpha}{2}} \| u \|_{L^\infty_{\frac{1 + \alpha}{2}}(R)} \right)^{\frac{1}{2}}.
        \end{split}
    \end{equation*}
    We use Young's inequality $ab \leq a^2 + b^2$ and pick $\epsilon$ small enough to absorb the $L^\infty_{\frac{1 + \alpha}{2}}(R)$ term in the righthand side. By recalling that the function $r \mapsto \| u \|_{L^p_\alpha(r)}$ is non-increasing, we have proven that
    \begin{equation*}
        \forall r \geq R, \qquad \frac{1}{r^{\frac{1 + \alpha}{2}}} \| u \|_{L^\infty(B_r)} \lesssim \| u \|_{L^2_\alpha (R)}^{\frac{1}{2}} \| \omega \|_{L^\infty}^{\frac{1}{2}} + \frac{1}{r^{\frac{1 + \alpha}{2}}} \| u \|_{L^2_\alpha(R)},
    \end{equation*}
    which immediately implies the inequality we were looking for.
\end{proof}

\section{Definition of a Yudovich Solution}\label{s:Definitions}

The purpose of this paragraph is to define a notion of solution in the Yudovich space $Y_\alpha$. The equations of incompressible hydrodynamics are usually solved by eliminating the pressure from the equation and expressing it as a function of the velocity $u$. This is done by means of the Leray projection operator $\P = {\rm Id} + \nabla (- \Delta)^{-1} \D$ so that the Euler equations formally reduce to
$$
\partial_t u + \P \D (u \otimes u) = 0.
$$
The term $\P \D (u \otimes u)$ would then be expressed in terms of a convolution product involving a fundamental solution of the Laplace operator (unique up to a harmonic function)
\begin{equation*}%\label{eq:FunSolution}
E(x) = \frac{-1}{2 \pi} \log |x|, \qquad \text{so that } - \Delta E = \delta_0,
\end{equation*}
so that, on a formal level, we would have
\begin{equation*}
    \begin{split}
        \P \D (u \otimes u) & = \D (u \otimes u) + \nabla E * \partial_j \partial_k (u_j u_k) \\
        & = \D (u \otimes u) + \nabla E * (\partial_j u_k \partial_k u_j).
    \end{split}
\end{equation*}
In the above, there is an implicit summation on the repeated indices $j, k = 1, 2$. We will keep this summation convention throughout the rest of the paper. The issue is that the convolution integrals in the formula above do not converge in general if $u \in L^2_\alpha$, even under strong regularity assumptions. Instead of using these convolution products as such, we define an extension of the $\nabla^3(- \Delta)^{-1}$ operator to $Y_\alpha$ that is continuous with respect to the weak-$(*)$ topology.

\begin{defi}\label{d:pressureOperator}
Let $d \geq 2$ and $0 \leq \alpha < \frac{1}{2}$, and consider $\theta \in \mc D$ a cut-off function as in \eqref{eq:cutOff}. For any divergence-free vector fields $u, v \in L^2_\alpha$ such that $\nabla u, \nabla v \in L^p_{\rm loc}$ for some $p > 2$, we define the quantity $\nabla \Pi (u \otimes v)$ by
\begin{equation}\label{eq:PressureYudovichDef}
\nabla \Pi (u \otimes v) := \nabla (\theta E) * (\partial_j u_k \, \partial_k v_j) + \nabla \partial_j \partial_k \big( (1 - \theta) E \big) * (u_j v_k).
\end{equation}
In the above, there is again an implicit sum on the repeated indices $j, k = 1, 2$. The operator $\nabla \Pi(u \otimes v)$ is independent of the choice of the cut-off function $\theta$.
\end{defi}

A few remarks are in order.

\begin{rmk}
    It may not seem obvious here that the convolution integrals involved in $\nabla \Pi(u \otimes v)$ are convergent. However, we will show below in the proofs that it is indeed the case.
\end{rmk}

\begin{rmk}\label{r:pRegularityPressure}
    Of course, if $u$ and $v$ are also sufficiently integrable, say $u, v \in L^p$ for some $p \in ]2, \infty[$, this coincides with the usual expression of the pressure given by Leray projection
    $$
    \nabla \Pi (u \otimes v) = \nabla (- \Delta)^{-1} \partial_j \partial_k (u_j u_k),
    $$
    where the order zero operator $(- \Delta)^{-1} \partial_j \partial_k (u_j u_k)$ can be seen as a Calder\`on-Zygmund operator, more precisely a product of Riesz transforms. Note that the first term on the right hand side of \eqref{eq:PressureYudovichDef} could alternatively be written as $\nabla \partial_j \partial_k \big[ (\theta E) * (u_k u_j) \big]$ which allows to define it for $ u \in L^p_{\rm loc}$ with $p > 2$.
\end{rmk}

\begin{rmk}
    Equation \eqref{eq:PressureYudovichDef} defining the pressure force should be thought of as an instance of pseudo-locality of the operator $\nabla \partial_j \partial_k (- \Delta)^{-1}$. It can be decomposed into the sum of an ``almost local'' operator and an operator with better integrability properties. 
\end{rmk}

Definition \ref{d:pressureOperator} gives a way of expressing the pressure that makes sense for $Y_\alpha$ solutions. We now use this in order to define the notion of Yudovich solution we will be working with.

\begin{defi}[Yudovich solution]\label{d:YudovichSolution}
We fix the space dimension to be $d = 2$. Let $\alpha \in [0, 1/2[$. Consider an initial datum of Yudovich regularity $u_0 \in Y_\alpha$ (see Definition \ref{d:YudovichSpace}). Then a function $u \in L^\infty_{\rm loc} (\R_+ ; Y_\alpha)$ is said to be a Yudovich solution of the Euler equations if it solves the initial value problem
$$
\begin{cases}
\partial_t u + \D (u \otimes u) + \nabla \Pi (u \otimes u) = 0\\
u(0, x) = u_0(x)
\end{cases}
$$
in the weak sense. In other words, for any $\phi \in \mc D (\R_+ \times \R^d ; \R^d)$, we have
\begin{equation*}%\label{eq:yudovitch} 
\int \phi(0, x) \cdot u_0(x) \dx + \int_0^{ \infty} \int \Big\{ u \cdot \partial_t \phi + \nabla \phi : u \otimes u \Big\} \dx \dt = \int_0^\infty\big\langle \nabla \Pi (u \otimes u), \phi \big\rangle dt.
\end{equation*} 
\end{defi}

It should be noted that one of the particularities of Yudovich solutions as defined above, aside from regularity, is that we have assumed the pressure force to be given by the formula $\nabla \Pi (u \otimes u)$. However, this need not be the case, and we can treat the pressure as an independent unknown of the system. This leads a the notion of solution which is weaker than that above. In fact, we will see that these two notions are not equivalent. Section \ref{s:PressureFormula} below will explore in detail the interplay between these two definitions.

\begin{defi}[Weak solution]\label{d:WeakSolution}
    Let $\alpha \geq 0$ and $T > 0$. Consider $u_0 \in L^2_\alpha$ a divergence-free initial datum. We say that a velocity field $u \in L^2_{\rm loc}([0, T[ ; L^2_\alpha)$ is a weak solution of the Euler equations \eqref{ieq:Euler} if the Euler system with initial datum $u_0$ if it solves the system \eqref{ieq:Euler} in the weak sense. In other words, the following statements hold.
    \begin{enumerate}[(i)]
        \item The divergence equation is solved in the sense of distributions
        \begin{equation*}
            \D(u) = 0 \qquad \text{in } \mc D'(]0, T[ \times \R^2).
        \end{equation*}
        \item The momentum equation is solved in the weak sense: for all test functions $\phi \in \mc D ([0, T[ \times \R^2 ; \R^2)$ such that $\D(\phi) = 0$, we have
        \begin{equation*}
            \int_0^T \int \Big\{ u \cdot \partial_t \phi + u \otimes u : \nabla \phi \Big\} \dx \dt + \int u_0 (x) \cdot \phi (0,x) \dx = 0.
        \end{equation*}
    \end{enumerate}
    In particular, every Yudovich solution is a weak solution.
\end{defi}

\begin{rmk}\label{r:IrregularPressure}
    The conditions of Definition \ref{d:WeakSolution} imply that there exists a distributional pressure force $F_\pi \in \mc D'([0, T[ \times \R^2)$ which is curl free, $\curl(F_\pi) = 0$, and such that
    \begin{equation}\label{eq:WeakEulerDistribution}
        \begin{cases}
            \partial_t u + \D (u \otimes u) + F_\pi = \delta_0 (t) \otimes u_0(x) \\
            \D(u) = 0,
        \end{cases}
        \qquad \text{in } \mc D'([0, T[ \times \R^2),
    \end{equation}
    where the tensor product $\delta_0 (t) \otimes u_0(x)$ is the distribution defined by the following duality bracket:
    \begin{equation*}
        \forall \phi \in \mc D ([0, T[ \times \R^2), \qquad \big\langle \delta_0(t) \otimes u_0(x), \phi(t,x) \big\rangle := \int u_0(x) \phi(0,x) \dx.
    \end{equation*}
    Because $\curl(F_\pi) = 0$, it is tempting to deduce the existence of a pressure function $\pi$ such that $F_\pi = \nabla \pi$ (see for example Lemma 3.11 in \cite{CF3}), however, there is no reason that such a $\pi$ would be a distribution, as it is only defined up to a function of time which can be extremely singular (for example $1/|t| \mathds{1}_{t=0}$ which is not a distribution). To avoid this problem, it is necessary to select a pressure for which this function of time cannot be singular, for example by imposing the condition $\int \pi \chi = 0$, where $\chi$ is a cut-off function as in \eqref{eq:cutOff}. To avoid the hassle of doing this, it is more convenient to work with the pressure force $F_\pi$ than the actual pressure $\pi$ when dealing with weak solutions. 
\end{rmk}

\begin{rmk}\label{r:nonEquivalence}
    In the case of Yudovich solutions, the pressure force $F_\pi$ is given by the operator $\nabla \Pi (u \otimes u)$. However, that is not automatically the case for weak solutions in general: if $u$ is a Yudovich solution and $g : \R_+ \longrightarrow \R^2$ is a non-constant smooth function of time, then we may define a new solution of the Euler equations by non-inertial change of reference frame (see \eqref{ieq:GenGalileo} above). Set
    \begin{equation*}
        v(t,x) = u\big( t, x - G(t) \big) + g(t),
    \end{equation*}
    with $G(t) = \int_0^t g$, then $v$ is a weak solution of the Euler equations according to Definition \ref{d:WeakSolution} for the initial datum $u(0)$, but it is \textsl{not} a Yudovich solution because the associated pressure force is given by $F_\pi = \nabla \Pi(v \otimes v) - g'(t)$, which has zero curl. Consequently, the notion of weak solution from Definition \ref{d:WeakSolution} is weaker than that of Definition \ref{d:YudovichSolution}, even under strong regularity solutions. Also note that this implies that weak solutions of the Euler equations with a given initial datum cannot be unique, since $g$ can be chosen to be zero on a neighborhood of $t=0$. As we will see in Section \ref{s:PressureFormula}, Yudovich solutions satisfy the far-field condition
    \begin{equation*}%\label{eq:FFremark}
        u(t) - u(0) \in \mc S'_h,
    \end{equation*}
    whereas (smooth) solutions according to Definition \ref{d:WeakSolution} do not in general.
\end{rmk}

\begin{rmk}\label{r:timeIrregularity}
    As pointed out in \cite{Cobb1}, weak solutions of the Euler equations suffer from all kind of pathologies when the set of initial data allows for constant flows, as is the case here. For example, if $u \in C^\infty \cap L^\infty ([0, T[ \times \R^d)$ is a smooth and bounded solution of the Euler equations according to Definition \ref{d:WeakSolution} with initial datum $u(0)$, then it also is a solution for the initial data $u(0) + V$ for any $V \in \R^2$. This comes from the fact that the constant velocity field $V$ is both divergence-free and a gradient $V = \nabla (V \cdot x)$, so it orthogonal to any divergence-free test function in the $\mc S' \times \mc S$ duality. This never happens for initial data in $L^p$ with $p < \infty$, as there is no non-trivial divergence-free gradient (\textsl{e.g.} constants) in $L^p$.

    Another odd and unpleasant phenomenon is that solutions according to Definition \ref{d:WeakSolution} have no automatic time continuity properties, even in distributional $\mc D'$ topology. For example, the function of time $u(t) = \mathds{1}_{t \geq T}$ is a solution of the Euler equations (by Definition \ref{d:WeakSolution}) for initial datum $u_0 = 0$, and can even be associated to the pressure $\pi(t,x) = - \delta_T(t) \otimes x$, but is not continuous with respect to time even in $\mc D'$ topology.
\end{rmk}

\begin{rmk}
    The space $\mc D([0, T[ \times \R^2)$ may seem a bit uncomfortable to handle, as the support of its elements is compact in $[0, T[ \times \R^2$, and so may include $t = 0$. A way around this is to extend the solution $u$ to negative times by setting $u(t) = 0$ for $t < 0$. In that case \eqref{eq:WeakEulerDistribution} actually holds in $\mc D' (]- \infty, T[ \times \R^2)$ for some pressure force $F_\pi$. We will use this idea to apply a time regularization by convolution in the proof of Theorem \ref{t:RepresentationPressure}.
\end{rmk}

\section{Existence of Solutions}\label{s:Existence}

In this section, we prove the existence of a global Yudovich solution. As we have explained in the introduction above, the vorticity $\omega$ solves a pure transport equation \eqref{ieq:OmegaTransport}, leading to the bound $\| \omega \|_{L^\infty} = \| \omega_0 \|_{L^\infty}$, and this implies that the solution is naturally globally log-Lipschitz. Therefore, proving global existence of $Y_\alpha$ solutions is not a matter of regularity, but rather showing that the Morrey norm $\| u \|_{L^2_\alpha}$ does not blow up. We start by proving this by studying the local energy balance of solutions, before proving the existence of $Y_\alpha$ solutions.

\subsection{Local Energy Estimates and Global Morrey Bounds}

\begin{comment}
If $p \in [1, \infty[$ and $f \in L^p_{\rm loc}$, it will be convenient to define, for a given radius $R \geq 1$, the quantity
$$
\| f \|_{L^p_\alpha(R)} := \sup_{r \geq R} \left[ \frac{1}{r^{2 + p \alpha}} \int_{B_r} |f|^p \dx \right]^{1/p}.
$$
An obvious adaptation of the above allows to define $\| f \|_{L^\infty_\alpha(R)}$. Of course, the $L^p_\alpha(R)$ norm is equivalent to the $L^p_\alpha$ one, but the equivalence constants depend on $R$. Note that $R \longmapsto \| f \|_{L^p_\alpha(R)}$ is a non-increasing function for any $p \in [1, \infty]$.

\medskip

The idea behind the $L^2_\alpha (R)$ norm is to capture the growth of solutions on large scales $|x| \gtrsim R$. As we will see, the distribution of kinetic energy on large scales is mostly independent of time, which translates into the following theorem.
\end{comment}

In this section, we examine local energy estimates for the solution. The goal is to show that the large scale distribution of kinetic energy varies very slowly with respect to time. This large scale information is encoded in the $L^2_\alpha(R)$ norms from Definition \ref{d:EQMorreyNorm}.

\begin{thm}\label{t:EnergyBound}
    Let $0 \leq \alpha < \frac{1}{2}$ and fix the space dimension to $d=2$. Consider an initial datum $u_0 \in Y_\alpha$ and let $u \in L^\infty_{\rm loc} (Y_\alpha)$ be a regular Yudovich solution associated to $u_0$. Then, we have the estimate
    \begin{equation}\label{eq:EnergyUpperBound}
        \| u(t) \|_{L^2_\alpha(R)} \leq 2 \| u_0 \|_{L^2_\alpha},
    \end{equation}
    provided that the radius $R \geq 1$ satisfies the following inequality:
    \begin{equation}\label{eq:RadiusLowerBound}
         R \geq \left( Kt + (Kt)^2 \| \omega_0 \|_{L^\infty} \right)^{\frac{1}{1 - \alpha}} \| u_0 \|_{L^2_\alpha}^{\frac{1}{1 - \alpha}}
         %\left( K t \| u_0 \|_{L^2_\alpha} \right)^{\frac{1}{1 - \alpha}} + \left( K t \| \omega_0 \|_{L^\infty}^{1/2} \| u_0 \|_{L^2_\alpha}^{1/2} \right)^{\frac{2}{1 - \alpha}},
    \end{equation}
    where $K > 0$ is some constant that depends only on $\frac{1}{1 - 2 \alpha}$.
\end{thm}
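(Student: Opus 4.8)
The plan is to run a localized energy balance on balls of large radius and convert the resulting boundary flux estimate into an integral inequality for the quantity $\Phi(\rho,t):=\rho^{-(2+2\alpha)}\int_{B_\rho}|u(t)|^2$, whose supremum over $\rho\geq R$ is exactly $N(t):=\|u(t)\|_{L^2_\alpha(R)}^2$ (see Definition~\ref{d:EQMorreyNorm}). Since $\D(u)=0$, pairing the momentum equation with $u$ gives, for a regular solution, the local energy identity $\partial_t|u|^2+\D\big((|u|^2+2\pi)u\big)=0$ with $\pi=\Pi(u\otimes u)$. Testing against a cutoff $\psi_\rho$ equal to $1$ on $B_\rho$, supported in $B_{2\rho}$, with $|\nabla\psi_\rho|\lesssim 1/\rho$, yields
$$
\frac{d}{dt}\int\psi_\rho|u|^2 = \int\nabla\psi_\rho\cdot(|u|^2+2\pi)u\,dx.
$$
Because $\int\nabla\psi_\rho\cdot u=-\int\psi_\rho\,\D(u)=0$, we may subtract from $\pi$ any function of time, so only the \emph{oscillation} of the pressure over the annulus $B_{2\rho}\setminus B_\rho$ enters. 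This is the key point that makes the nonlocal pressure tractable: we never need the size of $\pi$, only its local oscillation.

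Next I would estimate the flux. Since $\nabla\psi_\rho$ is supported in the annulus and is $O(1/\rho)$, the right-hand side is bounded by $\rho^{-1}\int_{B_{2\rho}\setminus B_\rho}(|u|^2+2|\pi-\bar\pi|)\,|u|$. For the cubic term I use the embedding of Lemma~\ref{l:interpolationInequality}, writing $A:=\|u\|_{L^\infty_{\frac{1+\alpha}{2}}(R)}\lesssim\|\omega_0\|_{L^\infty}^{1/2}\|u\|_{L^2_\alpha(R)}^{1/2}+R^{-\frac{1+\alpha}{2}}\|u\|_{L^2_\alpha(R)}$ (using $\|\omega\|_{L^\infty}=\|\omega_0\|_{L^\infty}$ by transport), together with $\int_{B_{2\rho}}|u|^2\leq N(t)(2\rho)^{2+2\alpha}$, giving a cubic flux $\lesssim A\,N\,\rho^{(3+5\alpha)/2}$. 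For the pressure term I use Cauchy--Schwarz, $\rho^{-1}\|\pi-\bar\pi\|_{L^2(B_{2\rho})}\|u\|_{L^2(B_{2\rho})}$, and a Morrey-type oscillation bound $\|\pi-\bar\pi\|_{L^2(B_{2\rho})}\lesssim(1-2\alpha)^{-1}A\,N^{1/2}\rho^{(3+3\alpha)/2}$, producing a pressure flux of the same order. Dividing by $\rho^{2+2\alpha}$ yields the crucial gain in $\rho$:
$$
\Big|\frac{d}{dt}\Big(\rho^{-(2+2\alpha)}\!\int\psi_\rho|u|^2\Big)\Big|\lesssim (1-2\alpha)^{-1}\,A\,N\,\rho^{-(1-\alpha)/2}.
$$

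I then close by a continuity argument. Integrating in time, comparing the cutoff average with $\Phi$, and taking the supremum over $\rho\geq R$ (so $\rho^{-(1-\alpha)/2}\leq R^{-(1-\alpha)/2}$), I obtain $N(t)\leq\|u_0\|_{L^2_\alpha}^2+C(1-2\alpha)^{-1}R^{-(1-\alpha)/2}\int_0^t A(s)N(s)\,ds$. Substituting $A\lesssim\|\omega_0\|_{L^\infty}^{1/2}N^{1/4}+R^{-\frac{1+\alpha}{2}}N^{1/2}$ gives an inequality in $N$ alone. Assuming $N(s)\leq 4\|u_0\|_{L^2_\alpha}^2$ on $[0,t]$ (the bootstrap hypothesis, valid at $t=0$ by continuity of $N$), the integral term is controlled by $C(1-2\alpha)^{-1}R^{-(1-\alpha)/2}t\big(\|\omega_0\|_{L^\infty}^{1/2}\|u_0\|_{L^2_\alpha}^{5/2}+R^{-\frac{1+\alpha}{2}}\|u_0\|_{L^2_\alpha}^{3}\big)$; requiring each summand to be at most $\tfrac32\|u_0\|_{L^2_\alpha}^2$ forces $N(t)\leq 4\|u_0\|_{L^2_\alpha}^2$, closing the bootstrap and giving \eqref{eq:EnergyUpperBound}. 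Solving the first smallness requirement, $R^{(1-\alpha)/2}\gtrsim t\,\|\omega_0\|_{L^\infty}^{1/2}\|u_0\|_{L^2_\alpha}^{1/2}$, yields the $(Kt)^2\|\omega_0\|_{L^\infty}$ contribution in \eqref{eq:RadiusLowerBound}, while the second, $R\gtrsim t\,\|u_0\|_{L^2_\alpha}$ (using $\tfrac{1-\alpha}{2}+\tfrac{1+\alpha}{2}=1$), is subsumed by the $Kt$ contribution; the constant $K$ then depends only on $(1-2\alpha)^{-1}$.

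The main obstacle is the pressure oscillation estimate. The crude pointwise bound $|u|^2\lesssim|x|^{1+\alpha}$ fails in the nonlocal part of $\nabla\Pi$, since the kernel $\nabla^3\big((1-\theta)E\big)$ decays only like $|x|^{-3}$ and $\int|y|^{-3}|y|^{1+\alpha}\,dy$ diverges for the relevant range of $\alpha$. One must instead exploit the $L^2$-averaged growth of $u\otimes u$ through a dyadic decomposition of the convolution in Definition~\ref{d:pressureOperator}, combined with the Calder\'on--Zygmund bound of Theorem~\ref{t:czo} applied to the local part $\nabla\partial_j\partial_k(\theta E)*(u_ju_k)$. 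On the dyadic annulus of scale $2^k$ the far kernel contributes $2^{-3k}$ against $\int|u|^2\sim 2^{k(2+2\alpha)}$, so the scales sum as the geometric series $\sum_{k\geq 0}2^{k(2\alpha-1)}$, which converges precisely because $\alpha<\tfrac12$ and is comparable to $(1-2\alpha)^{-1}$: this is the origin of the stated dependence of $K$. Secondary technical points are the justification of the energy identity (direct for regular solutions, by time regularization otherwise) and the non-smoothness in $t$ of the supremum $N(t)$, which is handled by working at each fixed $\rho$ before taking the supremum.
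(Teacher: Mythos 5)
Your proposal is correct in substance and follows the same strategy as the paper: a local energy balance on balls $B_\rho$ with $\rho\geq R$, the embedding $Y_\alpha\subset L^\infty_{\frac{1+\alpha}{2}}$ of Lemma \ref{l:interpolationInequality} to control the cubic flux, a near/far splitting of $u\otimes u$ with the Calder\'on--Zygmund bound for the near part and a dyadic geometric series $\sum_k 2^{k(2\alpha-1)}\simeq(1-2\alpha)^{-1}$ for the far part, and a continuity/bootstrap argument in time that produces exactly the radius condition \eqref{eq:RadiusLowerBound}. The exponent bookkeeping ($\rho^{-(1-\alpha)/2}$ gain after normalization, and the two smallness requirements yielding the $Kt$ and $(Kt)^2\|\omega_0\|_{L^\infty}$ contributions) matches the paper's.

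The one organizational difference is in the pressure flux. You integrate by parts on the whole pressure term and invoke an $L^2$ oscillation bound for $\pi-\bar\pi$ on the annulus, whereas the paper integrates by parts only on the near part $\nabla\Pi(\eta_R u\otimes u)$ and bounds the far-field pressure \emph{force} pointwise, using that the ``local kernel'' piece $\nabla(\theta E)*\nabla^2:((1-\eta_R)u\otimes u)$ has support disjoint from the cut-off. Your variant is workable, but note that $\Pi\big((1-\eta_R)u\otimes u\big)$ itself is \emph{not} a convergent integral for $\alpha>0$ (the kernel $|x-y|^{-2}$ against $|u|^2\sim|y|^{2\alpha}$ diverges dyadically), so the object whose oscillation you estimate must be defined as a local primitive of the pressure force $\nabla\bar\Pi(u\otimes u)$, with the oscillation controlled via $\rho\,\sup_{B_{2\rho}}|\nabla\pi_{\rm far}|$ plus the genuinely $L^2$ near part; this is a small but necessary extra step that the paper's formulation sidesteps entirely. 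Also, to close the bootstrap you should arrange strict inequality (e.g.\ impose the radius condition with the terminal time $t$ so that the bound is strict for all $T<t$), exactly as in the paper's definition of $t^*$; as written, concluding $N(t)\leq 4\|u_0\|_{L^2_\alpha}^2$ from a hypothesis of the same form needs this word of justification.
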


%\begin{rmk}
    %By taking the value $\alpha = 0$ in Theorem \ref{t:EnergyBound}, we recover exactly the result of \cite{Z} in the zero-viscosity case.
%\end{rmk}

An immediate consequence of Theorem \ref{t:EnergyBound} is a global Morrey bound: the $L^2_\alpha$ norm of the solution is bounded by a polynomial function of time. This will play the role of an \textsl{a priori} estimate.

\begin{cor}\label{c:algebraic}
    With the same assumptions as in Theorem \ref{t:EnergyBound}, we obtain the following global time-growth estimate: for all $t \geq 0$,
    \begin{equation*}
        \| u(t) \|_{L^2_\alpha} \lesssim \| u_0 \|_{L^2_\alpha} + K t^{\frac{1 + \alpha}{1 - \alpha}} \| u_0 \|_{L^2_\alpha}^{\frac{2}{1 - \alpha}} + K t^{2 \frac{1 + \alpha}{1 - \alpha}} \| \omega_0 \|_{L^\infty}^{\frac{1+\alpha}{1 - \alpha}} \| u_0 \|_{L^2_\alpha}^{\frac{2}{1 - \alpha}}.
    \end{equation*}
    The constant $K > 0$ depends, as above, only on $\frac{1}{1 - 2 \alpha}$.
\end{cor}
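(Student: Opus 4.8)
The plan is to treat Corollary~\ref{c:algebraic} as a purely scale-splitting consequence of Theorem~\ref{t:EnergyBound}: the theorem already controls the large scales $r \geq R$ once $R$ is large enough, while the full Morrey norm $\|u(t)\|_{L^2_\alpha} = \sup_{r \geq 1}[\,\cdot\,]^{1/2}$ differs from $\|u(t)\|_{L^2_\alpha(R)}$ only through the intermediate scales $1 \leq r \leq R$, which I would bound directly by the total mass on $B_R$. Concretely, I would fix
$$
R := \max\Big\{ 1, \big( Kt + (Kt)^2 \|\omega_0\|_{L^\infty} \big)^{\frac{1}{1-\alpha}} \|u_0\|_{L^2_\alpha}^{\frac{1}{1-\alpha}} \Big\},
$$
so that \eqref{eq:RadiusLowerBound} holds and the bound \eqref{eq:EnergyUpperBound} is available for this $R$.

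For the large scales $r \geq R$, every term in the supremum defining $\|u(t)\|_{L^2_\alpha}$ is dominated by $\|u(t)\|_{L^2_\alpha(R)} \leq 2\|u_0\|_{L^2_\alpha}$. For the intermediate scales $1 \leq r \leq R$, I would use monotonicity of $r \mapsto \int_{B_r}|u|^2$ together with $r^{-(2+2\alpha)} \leq 1$ (valid since $r \geq 1$) to write
$$
\frac{1}{r^{2+2\alpha}} \int_{B_r} |u(t)|^2 \dx \leq \int_{B_R}|u(t)|^2\dx = R^{2+2\alpha}\cdot \frac{1}{R^{2+2\alpha}}\int_{B_R}|u(t)|^2\dx \leq 4 R^{2+2\alpha}\|u_0\|_{L^2_\alpha}^2,
$$
where the last step is \eqref{eq:EnergyUpperBound}. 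Taking square roots, then the supremum over all $r \geq 1$, and recalling $R \geq 1$, this yields the compact estimate $\|u(t)\|_{L^2_\alpha} \leq 2 R^{1+\alpha}\|u_0\|_{L^2_\alpha}$.

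It then remains to substitute the chosen value of $R$ and tidy up the exponents. If the maximum defining $R$ equals $1$ we simply recover the harmless term $\|u(t)\|_{L^2_\alpha} \leq 2\|u_0\|_{L^2_\alpha}$; otherwise, raising the threshold to the power $1+\alpha$ and using the identity $\frac{1+\alpha}{1-\alpha}+1 = \frac{2}{1-\alpha}$ gives
$$
\|u(t)\|_{L^2_\alpha} \lesssim \big( Kt + (Kt)^2\|\omega_0\|_{L^\infty}\big)^{\frac{1+\alpha}{1-\alpha}} \|u_0\|_{L^2_\alpha}^{\frac{2}{1-\alpha}}.
$$
Finally I would expand the bracket with the elementary inequality $(a+b)^p \lesssim a^p + b^p$ (here $p = \frac{1+\alpha}{1-\alpha} \geq 1$), which splits the right-hand side into the two advertised time-power terms and absorbs all numerical factors and powers of $K$ into a single constant.

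I do not expect a genuine analytic obstacle: the entire nontrivial content is carried by Theorem~\ref{t:EnergyBound}. The only points requiring care are bookkeeping—tracking the exponents of $t$, $\|u_0\|_{L^2_\alpha}$ and $\|\omega_0\|_{L^\infty}$ through the power $1+\alpha$, and handling the case distinction $R=1$ versus $R>1$ so that the benign linear term $\|u_0\|_{L^2_\alpha}$ is correctly produced for small times.
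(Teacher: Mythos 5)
Your proposal is correct and follows essentially the same route as the paper: both reduce the full $L^2_\alpha$ norm to the large-scale norm via the elementary bound $\| u(t) \|_{L^2_\alpha} \leq R^{1+\alpha} \| u(t) \|_{L^2_\alpha(R)}$ (your intermediate-scale estimate is just this inequality written out), then choose $R$ at the threshold of \eqref{eq:RadiusLowerBound} and expand the resulting power of $R$. The only cosmetic difference is that the paper takes $R = 1 + (\cdots)^{\frac{1}{1-\alpha}} + (\cdots)^{\frac{2}{1-\alpha}}$ instead of a maximum, which avoids your case distinction but yields the same three terms.
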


\begin{rmk}
    For an unbounded solution $u$, it is expected that some growth of $\| u \|_{L^2_\alpha}$ may occur, as can be shown be a simple example. Consider the stationary flow
    $$
    u(x) = \nabla^\perp \left( \langle x \rangle^{1 + \alpha} \right) \in L^\infty_\alpha.
    $$
    Then, for any given $V \in \R^2$, we may consider a Galilean shift of $u$ defined by $v(t, x) = u(x - Vt) + V$, which also solves the Euler equations. As both $v(t) - v(0)$ and $u(t) - u(0)$ are elements of the space $\mc S'_h$, Theorem \ref{t:RepresentationPressure} below insures that it also is a Yudovich solution of the Euler equations. Then we obviously have
    $$
    \| v \|_{L^2(B_1)} \geq 1 + |tV|^\alpha.
    $$
\end{rmk}

\begin{proof}[Proof (of Corollary \ref{c:algebraic})]
    Consider a fixed $R \geq 1$. The proof is mainly a matter of bounding the $L^2_\alpha$ norm by the $L^2_\alpha(R)$ one. Set $f \in L^2_{\rm loc}$. For any $r \geq 1$, we have either, if $r \geq R$,
    $$
    \frac{1}{r^{2 + 2 \alpha}} \int_{B_r} |f|^2 \dx \leq \| f \|_{L^2_\alpha(R)},
    $$
    or, in the case where $r \leq R$,
    $$
    \frac{1}{r^{2 + 2 \alpha}} \int_{B_r} |f|^2 \dx \leq \; \left( \frac{R}{r} \right)^{2 + 2 \alpha} \frac{1}{R^{2 + 2 \alpha}} \int_{B_R} |f|^2 \dx.
    $$
    Consequently, we deduce that for any $R \geq 1$, we must have
    $$
    \| f \|_{L^2_\alpha} \leq R^{1 + \alpha} \| f \|_{L^2_\alpha(R)}.
    $$
    Let us apply this inequality to the results of Theorem \ref{t:EnergyBound}. We take as value for $R$
    $$
    R = 1 + \left( K t \| u_0 \|_{L^2_\alpha} \right)^{\frac{1}{1 - \alpha}} + \left( K t \| \omega_0 \|_{L^\infty}^{1/2} \| u_0 \|_{L^2_\alpha}^{1/2} \right)^{\frac{2}{1 - \alpha}},
    $$
    so that $R$ fulfills inequality \eqref{eq:RadiusLowerBound} while remaining larger than $1$ at all times. By \eqref{eq:EnergyUpperBound}, we may write
    $$
    \| u \|_{L^2_\alpha} \leq R^{1 + \alpha} \| u \|_{L^2_\alpha(R)} \lesssim R^{1 + \alpha} \| u_0 \|_{L^2_\alpha}.
    $$
    Substituting the value of $R$ inside this inequality proves the Corollary.
\end{proof}

\begin{proof}[Proof (of Theorem \ref{t:EnergyBound})]
The main idea of the proof is to perform energy estimates on balls of appropriate radius $R \geq 1$. For this, we fix a smooth cut-off function $\chi \in \mc D$ as in \eqref{eq:cutOff}, which we rescale by setting
\begin{equation}\label{eq:ChiCutOff}
    \chi_R (x) := \chi \left( \frac{x}{R} \right),
\end{equation}
so that $\chi_R$ is supported inside the ball $B_{2R}$. By taking the scalar product of the Euler equation by $\chi_R u$ and integrating over all the space, we obtain a local energy balance
\begin{equation}\label{eq:TotalBalance}
\frac{1}{2} \frac{\rm d}{\dt} \int \chi_R |u|^2 \dx = - \int \chi_R (u \cdot \nabla) u \cdot u \dx - \int \chi_R u \cdot \nabla \Pi (u \otimes u) \dx.
\end{equation}
The integral in the time derivative controls the kinetic energy of the solution in the ball $B_R$. Let us focus on the first integral in the righthand side, which represents roughly the flux of kinetic energy due to convection through the sphere $\partial B_R$. Through integration by parts, we see that
$$
I_1 := - \int \chi_R (u \cdot \nabla) u \cdot u \dx = \int \frac{1}{2} |u|^2 u \cdot \nabla \chi_R \dx
$$
is comparable to flux of kinetic energy through the boundary $\partial B_R$. By writing $\nabla \chi_R (x) = R^{-1} \nabla \chi (x/R)$, which is supported in the ball $B_{2R}$, we see that this integral is bounded by
\begin{equation}\label{eq:EnergyBalanceI1}
|I_1| \leq \frac{1}{2R} \| \nabla \chi \|_{L^\infty} \int_{B_{2R}} |u|^3 \dx \lesssim \frac{R^{2 + 2 \alpha}}{R^{1 - \alpha}} \left( \frac{1}{R^\alpha} \| u \|_{L^\infty(B_{2R})} \right) \left( \frac{1}{R^{1 + \alpha}} \| u \|_{L^2(B_{2R})} \right)^2
\end{equation}
Imagine for a moment that the solution has growth $u(t,x) = O(|x|^\alpha)$, or in other words that $u \in L^\infty_\alpha$ (which is a stronger condition than $u \in L^2_\alpha$). Then, the upper bound above shows that the energy flux $I_1$ due to convection is smaller than the energy in the ball $B_R$ by an order $O \left( \frac{1}{R^{1 - \alpha}} \right)$. In that case, omitting the contribution of the pressure, it would follow that, for any time $t \geq 0$,
\begin{equation}\label{eq:AssymptoticEnergyDistribution}
    \limss_{R \rightarrow \infty} \| u(t) \|_{L^2_\alpha(R)} \leq \| u_0 \|_{L^2_\alpha},
\end{equation}
so that inequality \eqref{eq:EnergyUpperBound} must hold for $R$ large enough (how large will obviously depend on $ \|u(t) \|_{L^\infty_\alpha}$). 

\medskip

However, it is not true in general that $u \in L^\infty_\alpha$, not to mention that obtaining an inequality comparable to \eqref{eq:EnergyBalanceI1} for the contribution of the pressure forces is quite involved. Instead, use the fact that solutions of Yudovich regularity $Y_\alpha$ lie in the space $Y_\alpha \subset L^\infty_{\frac{1 + \alpha}{2}}$, see Lemma \ref{l:interpolationInequality}, which is enough for \eqref{eq:AssymptoticEnergyDistribution} to hold. That being said, we keep estimate \eqref{eq:EnergyBalanceI1} in mind for later and investigate the contribution of the pressure, which involves much more technical computations.

\medskip

\textbf{STEP 1: pressure integral}. We may turn our attention to the pressure integral in \eqref{eq:TotalBalance}, before coming back to \eqref{eq:EnergyBalanceI1}. As we have explained, we must find good estimates for the integrals in the right hand side of \eqref{eq:TotalBalance}. We introduce a new cut-off function $\eta \in \mc D$ such that \eqref{eq:cutOff} holds and we define the functions $\eta_R$ by
\begin{equation}\label{eq:etaR}
    \eta_R(x) := \eta \left( \frac{x}{5R} \right).
\end{equation}
The function $\eta_R$ has support in the ball $B_{10R}$. We use the function $\eta_R$ to separate the tensor product $u \otimes u$ into two parts: one supported in the ball $B_{10R}$, and the other one supported away from $B_{5R}$, namely
$$
\nabla \Pi (u \otimes u) = \nabla \Pi (\eta_R u \otimes u) + \nabla \Pi \big( (1 - \eta_R) u \otimes u \big).
$$
In the equation immediately above, $\nabla \Pi$ is the pressure force map from Definition \ref{d:pressureOperator}.

\medskip

On the one hand, the function $\eta_R u \otimes u$ belongs to $L^2$, so we may take advantage of the fact that $\Pi = (- \Delta)^{-1} \nabla^2$ is a Fourier multiplication operator with a bounded symbol (see Remark \ref{r:pRegularityPressure}). This, along with integration by parts, yields
\begin{equation*}
    \begin{split}
        \left| \int \chi_R u \cdot \nabla \Pi (\eta_R u \otimes u) \dx \right| & = \left| \int \nabla \chi_R \cdot u \, \Pi (\eta_R u \otimes u) \dx \right| \\
        & \leq \frac{1}{R} \| \nabla \chi \|_{L^\infty} \| u \|_{L^2(B_{2R})} \big\| \Pi (\eta_R u \otimes u) \big\|_{L^2(B_{2R})} \\
        & \lesssim \frac{1}{R} \| u \|_{L^2(B_{2R})} \big\| \eta_R u \otimes u \big\|_{L^2}.
    \end{split}
\end{equation*}    
Because the function $\eta_R$ is supported on a ball of radius $10R$, we deduce that
$$
\left| \int \chi_R u \cdot \nabla \Pi (\eta_R u \otimes u) \dx \right| \lesssim \frac{1}{R} \| u \|_{L^2(B_{2R})} \| u \|_{L^\infty(B_{10R})} \| u \|_{L^2(B_{10R})},
$$
and by remembering that $r \longmapsto \| f \|_{L^p_\alpha(r)}$ is a decreasing function, we may write
\begin{equation}\label{eq:EnergyBalanceI2}
    \left| \int \chi_R u \cdot \nabla \Pi (\eta_R u \otimes u) \dx \right| \lesssim \frac{R^{2 + 2 \alpha}}{R^{1 - \alpha}} \| u \|_{L^2_\alpha(2R)}^2 \frac{1}{R^\alpha} \| u \|_{L^\infty (B_{10R})}.
\end{equation}

\medskip

On the other hand, we decompose $\nabla \Pi \big( (1 - \eta_R) u \otimes u \big)$ by using Definition \ref{d:pressureOperator}. Let $\theta \in \mc D$ be a cut-off function as in \eqref{eq:cutOff}, so that ${\rm supp}(\theta) \subset B_{2}$. Then, we have, following the definition of the operator $\nabla \Pi$,
$$
\nabla \Pi \big( (1 - \eta_R) u \otimes u \big) = \nabla (\theta E) * \nabla^2: \big( (1 - \eta_R) u \otimes u \big) + \nabla^3 \big( (1 - \theta) E \big) * \big( (1 - \eta_R) u \otimes u \big).
$$
Recall that this expression of the pressure force $\nabla \Pi (u \otimes u)$ is independent from the choice of cut-off function $\theta$, as we have pointed out in Definition \ref{d:pressureOperator}.

We start by looking at the first term in the righthand side above. This convolution product is supported in the set
\begin{equation*}
    \begin{split}
        {\rm supp} \big[ \nabla (\theta E) * \nabla^2: \big( (1 - \eta_R) u \otimes u \big) \big] & \subset {\rm supp} (\theta E) + {\rm supp} \big( (1 - \eta_R) u \otimes u \big) \\
        & \subset B_2 + {}^c B_{5 R} \subset \big\{ |x| \geq 3R \big\}.
    \end{split}
\end{equation*}
In particular, this function has a support which is disjoint from ${\rm supp}(\chi_R)$. As a consequence, it can have no bearing on the energy estimates and
\begin{equation*}
    \int \chi_R \nabla (\theta E) * \nabla^2 : \big( (1 - \eta_R) u \otimes u \big) \dx = 0.
\end{equation*}
We therefore ignore it and focus on the other term
$$
J(x) := \nabla^3 \big( (1 - \theta) E \big) * \big( (1 - \eta_R) u \otimes u \big)(x).
$$
The convolution kernel $\nabla^3 \big( (1 - \theta)E \big)(y)$ is smooth on the whole space and is therefore is bounded by $C \mathds{1}_{|y| \geq 5R} \langle y \rangle^{-3}$. This provides the following bound for $J(x)$,
\begin{equation*}
    |J(x)| \lesssim \int \big| u \otimes u (y) \big| \frac{\mathds{1}_{|x - y| \geq 5R}}{\langle x - y \rangle^3} \dy.
\end{equation*}
We then decompose this integral in a zone where the lower bound $\langle x-y \rangle \geq \frac{1}{C} \langle y \rangle$ holds and a zone where it does not. More precisely, by writing $\R^2 = \{ |y| \leq 2|x| \} \sqcup \{ |y| > 2|x| \}$, we have
\begin{equation*}
\begin{split}
|J(x)| & \lesssim \int_{|y| \leq 2|x|} \big| u \otimes u (y) \big| \frac{\mathds{1}_{|y| \geq 5 R }}{\langle x-y \rangle^3} \dy + \int_{|y| \geq 2|x|} \big| u \otimes u (y) \big| \frac{\mathds{1}_{|y| \geq 5 R }}{\langle x-y \rangle^3} \dy \\
& \lesssim \int \big| u \otimes u (y) \big| \frac{\mathds{1}_{5R \leq |y| \leq 2|x|}}{\langle x-y \rangle^3} \dy + \int_{|y| \geq 2|x|} \big| u \otimes u (y) \big| \frac{\mathds{1}_{|y| \geq 5 R }}{\langle y \rangle^3} \dy.
\end{split}
\end{equation*}
The first of these two integrals is defined on the set $5R \leq |y| \leq 2|x|$, and must therefore vanish if $|x| \leq 5/2R$. This means that when $J(x)$ is multiplied by $\chi_R$, this integral has no bearing whatsoever on the computations, and we may only consider the integral $|y| \geq 2|x|$. By using the lower bound $\langle x-y \rangle \geq \frac{1}{2}\langle y \rangle$, which holds on $\{ |y| \geq 2|x| \}$ by triangular inequality, we have
\begin{equation*}
    \chi_R(x) |J(x)| \lesssim \int_{|y| \geq 2|x|} \big| u \otimes u(y) \big| \frac{\mathds{1}_{|y| \geq 5R}}{\langle y \rangle^3} \dy,
\end{equation*}
and we are reduced to an integral whose integrand depends only on $y$. Now, estimating this integral in terms of $\| u \|_{L^2(B_{CR})}$ only, as we did for the other integrals above, is impossible, as all values of $u(y)$ for $|y| \geq 5R$ are involved, so we have to involve the full norm $\| u \|_{L^2_\alpha(CR)}$ in our estimates. We will do this by introducing a dyadic partition of the space: set
\begin{equation}\label{eq:dydadicSplittingR3Existence}
\begin{split}
\{ |y| \geq 5R \} \subset \{ |y| \geq R \} & = \bigsqcup_{k=0}^\infty \big\{ 2^k R \leq |y| < 2^{k+1} R \big\} \\
& := \bigsqcup_{k=0}^\infty A_k(R).
\end{split}
\end{equation}
We may then write
\begin{equation}\label{eq:SumBoundsExistence}
    \begin{split}
        \chi_R(x) |J(x)| & \lesssim \int_{|y| \geq 2|x|} \big| u \otimes u (y) \big| \frac{\mathds{1}_{|y| \geq 5 R }}{\langle y \rangle^3} \dy \\
        & \lesssim \sum_{k = 0}^\infty \int_{A_k(R)} \big| u \otimes u (y) \big| \frac{\dy}{\langle y \rangle^3} \\
        & \lesssim \sum_{k=0}^\infty \frac{1}{\left( 2^{k+1} R \right)^3}  \int_{A_k(R)} |u(y)|^2 \dy \\
        & = \sum_{k=0}^\infty \frac{\left( 2^{k+1}R \right)^{2 + 2\alpha}}{\left( 2^{k+1} R \right)^3} \left( \frac{1}{\left( 2^{k+1}R \right)^{2 + 2\alpha}} \int_{B_{2^{k+1}R}} |u(y)|^2 \dy \right)
    \end{split}
\end{equation}
The integral in the parenthesis can be bounded by using the norm $\| u \|_{L^2_\alpha(2R)}^2$, leaving us with a geometric sum to compute, and we obtain
\begin{equation}\label{eq:ThirdIntegralEstimate}
    \chi_R(x) |J(x)| \lesssim \frac{1}{1 - 2 \alpha} \frac{1}{R^{1 - 2 \alpha}} \| u \|_{L^2_\alpha(2R)}^2.
\end{equation}
By plugging this in the pressure integral, we find the upper bound we were after:
\begin{equation}\label{eq:EnergyBalanceI3}
\begin{split}
\left| \int \chi_R u \cdot \nabla \Pi \big( (1 - \eta_R) u \otimes u \big) \dx \right| & \lesssim \frac{1}{R^{1 - 2 \alpha}} \| u \|^2_{L^2_\alpha(2R)} \int \chi_R |u| \dx \\
& \lesssim \frac{R^{2 + 2\alpha}}{R^{1 - \alpha}} \| u \|_{L^2_\alpha(2R)}^2 \| u \|_{L^1_\alpha(2R)}.
\end{split}
\end{equation}

\medskip

\textbf{STEP 2: summary and reduction to energy norms}. Let us summarize the estimates we have obtained so far. By putting together \eqref{eq:EnergyBalanceI1}, \eqref{eq:EnergyBalanceI2} and \eqref{eq:EnergyBalanceI3}, we are able to deduce from the energy balance \eqref{eq:TotalBalance} a differential inequality, namely
\begin{equation}\label{eq:DiffEQ1}
    \frac{\rm d}{\dt} \left\{ \frac{1}{R^{2 + 2 \alpha}} \int \chi_R |u|^2 \dx \right\} \lesssim \frac{1}{R^{1 - \alpha}} \| u \|^2_{L^2_\alpha(2R)} \left( \| u \|_{L^1_\alpha(2R)} + \frac{1}{R^\alpha} \| u \|_{L^\infty(B_{10R})} \right).
\end{equation}
The problem with the previous inequality is that the upper bound we have obtained does not only involve $L^2_\alpha(2R)$ norms. The first summand appearing in the righthand side is a $L^1_\alpha$ type quantity, but this is no problem because we have, due to Hölder's inequality,
$$
\| u \|_{L^1_\alpha(2R)} \lesssim \| u \|_{L^2_\alpha(2R)}.
$$
However, for the $L^\infty$ norm, we must resort to the embedding $Y_\alpha \subset L^\infty_{\frac{1 + \alpha}{2}}$ of Lemma \ref{l:interpolationInequality}, and in particular inequality \eqref{eq:LinftyAlphaPlus} therein. This allows us to rewrite \eqref{eq:DiffEQ1} as (recall that $\| f \|_{L^2_\alpha(2R)} \leq \| f \|_{L^2_\alpha(R)}$)
%We resort to the interpolation inequality of Lemma \ref{l:interpolationInequality}, which we apply to a scaled version of $u$, namely $u_r(x) := r^{-1} u (rx)$. The function $u_r$ fulfills the assumptions of Lemma \ref{l:interpolationInequality} on the unit ball $B$, and has a vorticity $\omega_r$ for which $\| \omega \|_{L^\infty} = \| \omega_r \|_{L^\infty}$. We obtain the bounds
%$$
%\| u \|_{L^\infty(B_r)} \lesssim \| \omega \|_{L^\infty}^{1/2} \| u \|_{L^2(B_r)}^{1/2} + \frac{1}{r} \| u \|_{L^2(B_r)},
%$$
%which we express with radii-independent quantities as
%\begin{equation}\label{eq:LinftyAlphaPlus}
%    \frac{1}{r^\alpha} \| u \|_{L^\infty(B_r)} \lesssim r^{\frac{1 - \alpha}{2}} \| \omega \|_{L^\infty}^{1/2} \left( \frac{1}{r^{1 + \alpha}} \| u \|_{L^2(B_r)} \right)^{1/2} + \frac{1}{r^{1 + \alpha}} \| u \|_{L^2(B_r)}.
%\end{equation}
%& \quad \lesssim \frac{1}{R^{1 - \alpha}} \| u \|^2_{L^2_\alpha(2R)} \left\{ \| u \|_{L^2_\alpha(2R)} + R^{\frac{1 - \alpha}{2}} \| \omega \|_{L^\infty}^{1/2} \left( \frac{1}{R^{1 + \alpha}} \| u \|_{L^2(B_{10R})} \right)^{1/2} + \frac{1}{R^{1 + \alpha}} \| u \|_{L^2(B_{10R})} \right\} \\
\begin{equation*}
    \frac{\rm d}{\dt} \left\{ \frac{1}{R^{2 + 2 \alpha}} \int \chi_R |u|^2 \dx \right\} \lesssim \| u \|^2_{L^2_\alpha(R)} \left\{ \frac{1}{R^{1 - \alpha}} \| u \|_{L^2_\alpha(R)} + \frac{\| \omega \|_{L^\infty}^{1/2}}{R^{\frac{1 - \alpha}{2}}} \| u \|^{1/2}_{L^2_\alpha(R)} \right\},
\end{equation*}
and by integrating with respect to time, 
\begin{equation*}
    \begin{split}
        \frac{1}{R^{2 + 2 \alpha}} \int_{B_R} |u(t,x)|^2 \dx & \leq \frac{1}{R^{2 + 2 \alpha}} \int \chi_R(x) |u(t,x)|^2 \dx \\
        & \leq \| u_0 \|_{L^2_\alpha (R)}^2 + C \int_0^t \| u \|^2_{L^2_\alpha(R)} \left( \frac{1}{R^{1 - \alpha}} \| u \|_{L^2_\alpha(R)} + \frac{\| \omega \|_{L^\infty}^{1/2}}{R^{\frac{1 - \alpha}{2}}} \| u \|^{1/2}_{L^2_\alpha(R)} \right) {\rm d} \tau.
    \end{split}
\end{equation*}
Because the function $r \longmapsto \| f \|_{L^2_\alpha(r)}$ is non-increasing, the inequality above shows that, for any fixed $R \geq 1$ and any $r \geq R$, we have
\begin{equation*}
    \frac{1}{r^{2 + 2 \alpha}} \int_{B_r} \big| u(t) \big|^2 \dx \leq \| u_0 \|_{L^2_\alpha(R)}^2 + C \int_0^t \| u \|^2_{L^2_\alpha(R)} \left( \frac{1}{R^{1 - \alpha}} \| u \|_{L^2_\alpha(R)} + \frac{\| \omega \|_{L^\infty}^{1/2}}{R^{\frac{1 - \alpha}{2}}} \| u \|^{1/2}_{L^2_\alpha(R)} \right) {\rm d} \tau
\end{equation*}
so that, by taking the supremum over all $r \geq R$, we finally get an integral inequality for the quantity $\| u \|_{L^2_\alpha(R)}$, namely
\begin{equation}\label{eq:IntIneq1}
    \big\| u(t) \big\|_{L^2_\alpha(R)}^2 \leq \| u_0 \|_{L^2_\alpha(R)}^2 + C \int_0^t \| u \|^2_{L^2_\alpha(R)} \left( \frac{1}{R^{1 - \alpha}} \| u \|_{L^2_\alpha(R)} + \frac{\| \omega \|_{L^\infty}^{1/2}}{R^{\frac{1 - \alpha}{2}}} \| u \|^{1/2}_{L^2_\alpha(R)} \right) {\rm d} \tau
\end{equation}

\begin{rmk}
    In particular, the upper bound in the integral inequality above tends to zero as $R \rightarrow \infty$, and we deduce that we indeed have $\sup_{[0, T]} \limss_{R\to \infty} \| u(t) \|_{L^2_\alpha(R)} \leq \| u_0 \|_{L^2_\alpha}$ for any $T > 0$ such that $u \in L^\infty ([0, T[ ; L^2_\alpha)$, as we had expected from the beginning of the proof. 
\end{rmk}
    %Writing an inequality with this property (which we already had in \eqref{eq:DiffEQ1}), is of crucial importance for the proof, as we have explained above. This remark shows why the interpolation inequality of Lemma \ref{l:interpolationInequality} is a necessary step: if we had simply used a mean-value type inequality to bound $\| u \|_{L^\infty(B_r)}$, we would have had
    %$$
    %\| u \|_{L^\infty(B_r)} \lesssim  r \log(1 + r) \| \omega \|_{L^\infty} + \| u \|_{L^2(B_r)},
    %$$
    %which has a significantly worse dependency on the radius $r$. In particular, it would only give a $O(R^\alpha)$ constant in the integral inequality instead of the one we have in \eqref{eq:IntIneq1}, and therefore does not lead to decay as $R \rightarrow \infty$.

\medskip

\textbf{STEP 3: closing the estimates}. Now, we just have to use \eqref{eq:IntIneq1} to prove the conclusion. Consider any time $t > 0$. We seek a condition that the radius $R > 0$ must satisfy in order to have $\| u(\tau) \|_{L^2_\alpha} \leq 2 \| u_0 \|_{L^2_\alpha}$ for all $\tau \in [0, t]$. Therefore, define the time $t^*$ by
\begin{equation*}
    t^* := \sup \left\{ T \in [0, t], \quad C \int_0^T \| u \|^2_{L^2_\alpha(R)} \left( \frac{1}{R^{1 - \alpha}} \| u \|_{L^2_\alpha(R)} + \frac{\| \omega \|_{L^\infty}^{1/2}}{R^{\frac{1 - \alpha}{2}}} \| u \|^{1/2}_{L^2_\alpha(R)} \right) {\rm d} \tau \leq \| u_0 \|_{L^2_\alpha}^2 \right\}.
\end{equation*}
The time $t^*$ is defined so that we have $\| u(\tau) \|_{L^2_\alpha} \leq 2 \| u_0 \|_{L^2_\alpha}$ for all $\tau \leq t$. Now, for all times $T \leq t^*$, we must have the inequality
\begin{multline*}
    C \int_0^T \| u \|^2_{L^2_\alpha(R)} \left( \frac{1}{R^{1 - \alpha}} \| u \|_{L^2_\alpha(R)}  + \frac{\| \omega \|_{L^\infty}^{1/2}}{R^{\frac{1 - \alpha}{2}}} \| u \|^{1/2}_{L^2_\alpha(R)} \right) {\rm d} \tau \\
    \leq 8 C \| u_0 \|_{L^2_\alpha}^2 \left( \frac{T}{R^{1 - \alpha}} \| u_0 \|_{L^2_\alpha} + \frac{T}{R^{\frac{1 - \alpha}{2}}} \| \omega_0 \|_{L^\infty}^{1/2} \| u_0 \|_{L^2_\alpha}^{1/2} \right).
\end{multline*}
In particular, we see that if $R \geq 1$ is such that
\begin{equation*}
    R \geq \left( 8 C t \| u_0 \|_{L^2_\alpha} \right)^{\frac{1}{1 - \alpha}} + \left( 8 C t \| \omega_0 \|_{L^\infty}^{1/2} \| u_0 \|_{L^2_\alpha}^{1/2} \right)^{\frac{2}{1 - \alpha}},
\end{equation*}
then we must have $t^* = t$. Indeed, if that were not the case, and $t^* < t$, then our choice of $R$ would show that the inequality in the supremum defining $t^*$ is strict for all $T \in [0, t^*]$, thus contradicting the definition of the supremum. This ends the proof of Theorem \ref{t:EnergyBound}.

\end{proof}

\subsection{Existence of Solutions}

In this subsection, we establish the existence of global $Y_\alpha$ solutions by means of an approximation scheme: by truncating the initial data, we use the classical existence of Yudovich solutions (for $\omega \in L^1 \cap L^\infty$) to construct a family of approximate solutions which will satisfy the bounds of the previous paragraph. Convergence to a $Y_\alpha$ solution will be obtained by means of a compactness method.

\begin{thm}
    Consider $0 \leq \alpha < \frac{1}{2}$ and an initial datum $u_0 \in Y_\alpha$. Then there exists a Yudovich solution $u \in L^\infty_{\rm loc}(\R_+ ; Y_\alpha)$ associated to that initial datum, according to Definition \ref{d:YudovichSolution}.
\end{thm}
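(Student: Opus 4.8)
The plan is to construct a family of approximate solutions by truncating the initial data, apply the classical Yudovich theory to each, extract uniform-in-$n$ bounds from the \emph{a priori} estimates of the previous subsection (Theorem \ref{t:EnergyBound} and Corollary \ref{c:algebraic}), and then pass to the limit by a compactness argument. The delicate point throughout will be that the approximating sequence must be controlled in $Y_\alpha$ \emph{uniformly in $n$ and locally uniformly in time}, and that the nonlinear and pressure terms converge in a sufficiently strong topology to identify the limit as a genuine Yudovich solution (in particular, the limit must carry the correct pressure force $\nabla \Pi(u \otimes u)$, not merely some distributional pressure force as in Definition \ref{d:WeakSolution}).

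\textbf{Construction of approximate solutions.} First I would regularize and truncate the datum: set $u_0^n := \chi(\cdot/n) \, S_n u_0$ (or a similar mollification-plus-cutoff) where $S_n$ is a low-frequency projector, arranged so that $u_0^n$ is divergence-free (projecting with Leray, which is legitimate here because $u_0^n$ is now a Schwartz-class, or at least $L^2 \cap \dot W^{1,\infty}$, perturbation). The truncated vorticity $\omega_0^n = \curl(u_0^n)$ then lies in $L^1 \cap L^\infty$ with $\|\omega_0^n\|_{L^\infty} \lesssim \|\omega_0\|_{L^\infty}$ uniformly, and $u_0^n \to u_0$ in $L^2_{\alpha}$ on every ball. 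Classical Yudovich theory (\cite{Yudovich}) supplies, for each $n$, a unique global solution $u^n$ with $\omega^n \in L^\infty(\R_+; L^1 \cap L^\infty)$ and finite energy; being smooth and decaying, $u^n$ is a regular Yudovich solution in the sense required to apply Theorem \ref{t:EnergyBound}.

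\textbf{Uniform bounds.} The transport structure \eqref{ieq:OmegaTransport} gives $\|\omega^n(t)\|_{L^\infty} = \|\omega_0^n\|_{L^\infty} \lesssim \|\omega_0\|_{L^\infty}$ for all $t$, uniformly in $n$. Since each $u^n$ is a regular solution, Corollary \ref{c:algebraic} applies and yields, for every $T>0$,
\begin{equation*}
    \sup_{n} \, \sup_{t \in [0,T]} \| u^n(t) \|_{L^2_\alpha} \leq C\big(T, \|u_0\|_{L^2_\alpha}, \|\omega_0\|_{L^\infty}\big) < \infty,
\end{equation*}
because the right-hand side of the corollary depends only on $\|\omega_0^n\|_{L^\infty}$ and $\|u_0^n\|_{L^2_\alpha}$, both controlled uniformly. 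Combined with the embedding $Y_\alpha \subset L^\infty_{(1+\alpha)/2}$ of Lemma \ref{l:interpolationInequality}, this gives uniform local $L^\infty$ bounds, hence uniform log-Lipschitz control of $u^n$ on every ball; via the equation it also gives a uniform bound on $\partial_t u^n$ in a suitable negative-regularity local space.

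\textbf{Compactness and passage to the limit.} With $\{u^n\}$ bounded in $L^\infty_{\rm loc}(Y_\alpha)$ and $\{\partial_t u^n\}$ bounded in $L^\infty_{\rm loc}$ of a weaker local space, I would apply an Aubin--Lions--Simon argument on each ball $B_R$: the uniform $W^{1,\infty}_{\rm loc}$ spatial control (from bounded vorticity and local $L^\infty$) together with equicontinuity in time yields strong convergence $u^n \to u$ in $C^0_{\rm loc}(\R_+; L^2_{\rm loc})$ along a subsequence, with $u \in L^\infty_{\rm loc}(Y_\alpha)$ by weak-$*$ lower semicontinuity of the Morrey norm. Strong local $L^2$ convergence suffices to pass to the limit in the quadratic term $u^n \otimes u^n \to u \otimes u$ in $L^1_{\rm loc}$. \textbf{The main obstacle} is the pressure term: I must show $\nabla \Pi(u^n \otimes u^n) \to \nabla \Pi(u \otimes u)$ in $\mc D'$. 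This is where the split structure of Definition \ref{d:pressureOperator} is essential — the near-diagonal piece $\nabla(\theta E) * (\partial_j u^n_k \partial_k u^n_j)$ is genuinely local and passes under strong local convergence, while the far piece $\nabla^3((1-\theta)E) * (u^n_j u^n_k)$ involves a tail integral over all of $\R^2$; to control it I would reuse precisely the dyadic splitting \eqref{eq:dydadicSplittingR3Existence} and the geometric-sum estimate \eqref{eq:ThirdIntegralEstimate}, exploiting the uniform $L^2_\alpha$ bound to dominate the tail (uniformly in $n$, using $\alpha < 1/2$ for convergence) and then invoking dominated convergence together with the strong local convergence to pass to the limit. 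Finally, testing the weak formulation of each $u^n$ against a fixed $\phi \in \mc D$ and sending $n \to \infty$, the regularization $u_0^n \to u_0$ handles the initial-data term, and one concludes that $u$ is a Yudovich solution in the sense of Definition \ref{d:YudovichSolution}.
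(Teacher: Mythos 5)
Your proposal follows essentially the same route as the paper: truncate the data, invoke classical Yudovich theory for each approximation, extract uniform $Y_\alpha$ bounds from Corollary \ref{c:algebraic} and Lemma \ref{l:interpolationInequality}, obtain time equicontinuity from the equation (estimating the localized pressure via the near/far splitting), and pass to the limit by local compactness, treating the far part of the pressure with the dyadic decomposition \eqref{eq:dydadicSplittingR3Existence} and dominated convergence. The only (harmless) slip is your claim of uniform $W^{1,\infty}_{\rm loc}$ spatial control — bounded vorticity yields only $\nabla u^n \in L^p_{\rm loc}$ for $p<\infty$ (the paper works with $L^\infty_T(H^1)$ bounds on $\chi_R u^n$), but that is all the compactness argument requires.
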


\begin{proof}
    Consider a truncation function $\chi \in \mc D$ as in \eqref{eq:cutOff} and define the sequence $(u_n, \pi_n)$ as being the unique Yudovich solutions of the 2D Euler equations (see \cite{Yudovich, YudovichEN} or Chapter 8 of \cite{MB}) with $\omega_{n}(0) \in L^1 \cap L^\infty$ and truncated initial data:
    \begin{equation*}
        \begin{cases}
            \partial_t u_n + \D(u_n \otimes u_n) + \nabla \pi_n = 0 \\
            \D(u_n) = 0,
        \end{cases}
        \qquad \text{with } u_n(0, x) = \chi \left( \frac{x}{n} \right) u_0(x).
    \end{equation*}
    In that case, the pressure force is given by the formula of Definition \ref{d:pressureOperator}, namely $\nabla \pi_n = \nabla \Pi (u_n \otimes u_n)$, so that $(u_n)$ is a Yudovich $L^2_\alpha$ solution of the Euler problem according to Definition \ref{d:YudovichSolution}.

    \medskip

    The global bounds of Corollary \ref{c:algebraic} show that the sequence $(u_n)$ satisfies the following uniform bounds: for any fixed $T > 0$, 
    \begin{equation}\label{eq:UnifBoundsExistence}
        \| \omega_n \|_{L^\infty(\R_+ \times \R^2)} \leq C \qquad \text{and} \qquad \| u_n \|_{L^\infty_T(L^2_\alpha)} \leq C.
    \end{equation}
    These bounds are enough to provide compactness with respect to the space variable. However, for compactness in the time variable, we need to find some uniform bound on the time derivatives $(\partial_t u_n)$. We start with localizing the problem by fixing a cut-off function $\chi_R \in \mc D$ as in \eqref{eq:ChiCutOff} and note that
    \begin{equation*}
        \partial_t (\chi_R u_n) + \chi_R \D(u_n \otimes u_n) + \chi_R \nabla \Pi (u_n \otimes u_n) = 0.
    \end{equation*}
    On the one hand, the uniform bounds \eqref{eq:UnifBoundsExistence} combined with inequality \eqref{eq:LinftyAlphaPlus} allow us to infer that the sequence $(u_n)$ is bounded in the space $L^\infty_T (L^2_\alpha \cap L^\infty_{(1 + \alpha)/2})$. In particular, the tensor product $u_n \otimes u_n$ is bounded in $L^\infty_T(L^2_{\rm loc})$, and we deduce the bound
    \begin{equation*}
        \big\| \chi_R \D (u_n \otimes u_n) \big\|_{L^\infty_T (H^{-1})} \leq C(T, R).
    \end{equation*}
    In the above, the constant depends on $T$ and $R \geq 1$, but that does not matter as long as it does not depend on $n$. Estimates for the pressure force $\nabla \Pi (u_n \otimes u_n)$ are a bit more complicated as the operator $\nabla \Pi$ is non-local, so that we need to shortly revisit the computations of the previous section. Let $\theta \in \mc D$ be a cut-off function as in \eqref{eq:cutOff} and recall $\eta_R$ from \eqref{eq:etaR}. We have, as before,
    \begin{equation*}
        \chi_R \nabla \Pi (u_n \otimes u_n) = \chi_R \nabla \Pi (\eta_R u_n \otimes u_n) + \chi_R \nabla \Pi \big( (1 - \eta_R) u_n \otimes u_n \big).
    \end{equation*}
    The first term can be bounded by using the fact that $\Pi = (- \Delta)^{-1} \nabla^2$ is a Fourier multiplication operator with a bounded symbol. We therefore have
    \begin{equation*}
        \begin{split}
            \big\| \chi_R \nabla \Pi \big( \eta_R u_n \otimes u_n \big) \big\|_{H^{-1}} & \lesssim \| \eta_R u_n \otimes u_n \|_{L^2}\\
            & \lesssim \| \eta_R^{1/2} u_n \|_{L^2} \| \eta_R^{1/2} u_n \|_{L^\infty} \\
            & \lesssim C(R) \| u_n \|_{L^2_\alpha} \| u_n \|_{L^\infty_{\frac{1 + \alpha}{2}}} \leq C(T, R).
        \end{split}
    \end{equation*}
    For the other pressure term, we use inequality \eqref{eq:ThirdIntegralEstimate} in order to obtain the bound
    \begin{equation*}
        \chi_R \big| \nabla \Pi \big( (1 - \eta_R) u_n \otimes u_n \big) \big| \leq \| u_n \|_{L^2_\alpha}^2 \leq C(T,R),
    \end{equation*}
    which gives the uniform bound
    \begin{equation*}
        \big\| \chi_R \nabla \Pi \big( (1 - \eta_R) u_n \otimes u_n \big) \big\|_{L^\infty_T(L^2)} \leq C(T, R).
    \end{equation*}
    In conclusion, we see that the time derivatives $\partial_t (\chi_R u_n)$ are bounded in the space $L^\infty_T(H^{-1})$. In parallel, the uniform bounds of \eqref{eq:UnifBoundsExistence} show that the functions $\chi_R u_n$ have a vorticity
    \begin{equation*}
        \curl (\chi_R u_n) = \chi_R \omega_n + \nabla^\perp \chi_R \cdot u_n
    \end{equation*}
    that is uniformly bounded in the space $L^\infty_T(L^2)$ with respect to $n$. Putting everything together, we obtain the following uniform bounds
    \begin{equation*}
        \| \chi_R u_n \|_{W^{1, \infty}_T(H^{-1})} + \| \chi_R u_n \|_{L^\infty_T(H^1)} \leq C(T, R)
    \end{equation*}
    hold. The compact embedding $H^1_{\rm loc} \subset L^p_{\rm loc}$ for $2 \leq p < \infty$, Ascoli's theorem, and the fact that $R \geq 1$ is arbitrary allow us to deduce convergence of the sequence $(u_n)$, possibly up to an extraction of a subsequence,
    \begin{equation}\label{eq:StrongConvergence}
        u_n \tend_{n \rightarrow \infty} u \qquad \text{in } L^\infty_T(L^p_{\rm loc}) \text{ and a.e.}
    \end{equation}
    
    \medskip

    Now, let us check that the limit $u$  belongs to the Yudovich space $L^\infty_T(Y_\alpha)$. On the one hand, it is fairly straightforward to see that $u \in L^\infty_T(L^2_\alpha)$. Indeed, the strong convergence \eqref{eq:StrongConvergence} in $L^\infty_T(L^2_{{\rm loc}})$ shows that we have
    \begin{equation*}
        \| u_n(t) \|_{L^2(B_R)} \longrightarrow \| u(t) \|_{L^2(B_R)} \leq C(T) R^{1 + \alpha}
    \end{equation*}
    for almost every $0 \leq t \leq T$. In addition, the Banach-Steinhaus theorem insures that we also preserve the vorticity bound $\| \omega \|_{L^\infty} \leq \| \omega_0 \|_{L^\infty}$. We see that the limit $u$ is indeed an element of the Yudovich space $L^\infty_T(Y_\alpha)$.

    \medskip

    Finally, we have to make sure that $u$ is indeed a solution of the equation. Thanks to the convergence \eqref{eq:StrongConvergence}, we immediately get
    \begin{equation}\label{eq:EasyCV}
        \begin{split}
            &u_n(0) \longrightarrow u_0  \qquad \text{in } \mc D'(\R^2)\\
            &\partial_t u_n \longrightarrow \partial_t u \qquad \text{in } \mc D'(]0, T[ \times \R^2) \\
            &\D (u_n \otimes u_n) \longrightarrow \D (u \otimes u) \qquad \text{in } \mc D'(]0, T[ \times \R^2),
        \end{split}
    \end{equation}
    and it remains to check that the pressure $\nabla \Pi (u_n \otimes u_n)$ also converges to $\nabla \Pi (u \otimes u)$. By Definition \ref{d:pressureOperator}, we have 
    \begin{equation*}
        \nabla \Pi (u_n \otimes u_n) = \nabla (\theta E) * \nabla^2 : (u_n \otimes u_n) + \nabla^3 \big( (1 - \theta E) \big) * (u_n \otimes u_n).
    \end{equation*}
    Concerning the first term, the kernel $\nabla (\theta E)$ is compactly supported and lies in $L^q$ for $1 \leq q < 2$. Therefore, by using the convergence \eqref{eq:StrongConvergence} for $p > 4$, we see that
    \begin{equation*}
        \nabla (\theta E) * (u_n \otimes u_n) \longrightarrow \nabla (\theta E) * (u \otimes u) \qquad \text{in } L^1_{\rm loc}(]0, T[ \times \R^2),
    \end{equation*}
    and therefore we have the distributional convergence of the second derivatives
    \begin{equation}\label{eq:CVPressure1}
        \begin{split}
            \nabla (\theta E) * \nabla^2 : (u_n \otimes u_n) & = \nabla^2 : \big( \nabla (\theta E) * (u_n \otimes u_n) \big)\\
            & \longrightarrow \nabla^2 : \big( \nabla (\theta E) * (u \otimes u) \big) \qquad
         \text{in } \mc D'(]0, T[ \times \R^2)\\
            & = \nabla (\theta E) * \nabla^2 : (u \otimes u).
        \end{split}
    \end{equation}
    Concerning the second term in the pressure operator, we note that because of inequality \eqref{eq:ThirdIntegralEstimate}, we have the following uniform bound at our disposal:
    \begin{equation*}
        \chi_R(x) \big| \nabla^3 \big( (1 - \theta)E \big) * (u_n \otimes u_n)(x) \big| \lesssim C(R) \| u_n \|_{L^2_\alpha}^2 \leq C(T, R).
    \end{equation*}
    Therefore, it is enough to show convergence for almost every $(t, x) \in [0, T[ \times \R^2$ to show that the above also converges in $L^1$, thanks to the dominated convergence theorem (note that $\chi_R$ has uniformly compact support). To facilitate this, we decompose again the integral with the help of a dyadic partition of the space. Define
    \begin{equation*}
        \begin{split}
            \R^2 & = \{ |y| \leq 1 \} \sqcup \bigsqcup_{k=0}^\infty \{ 2^k < |y| \leq 2^{k+1} \} \\
            & := A_0 \sqcup \bigsqcup_{k=1}^\infty A_k.
        \end{split}
    \end{equation*}
    Then we have, by noting $\Psi = \nabla^3 ((1 - \theta) E)$ the convolution kernel,
    \begin{equation}\label{eq:SummationLimit}
        \chi_R (x) \big( \Psi * (u_n \otimes u_n)(x) \big) = \sum_{k=0}^\infty \int_{A_k} u_n \otimes u_n (y) \, \Psi (x-y) \dy.
    \end{equation}
    Now, thanks to the strong convergence \eqref{eq:StrongConvergence}, each integral in this series converges for almost every $(t,x)$, possibly up to an extraction of a subsequence:
    \begin{equation}\label{eq:SummandLimit}
        I_{n, k}(t,x) := \int_{A_k} u_n \otimes u_n (t, y) \, \Psi (x-y) \dy \longrightarrow \int_{A_k} u \otimes u (t,y) \, \Psi (x-y) \dy,
    \end{equation}
    while at the same time satisfying the uniform bounds $|I_{n, k}(t,x)| \leq 2^{-k(1 - 2 \alpha)}C(T,R)$ from \eqref{eq:SumBoundsExistence}. A final application of dominated convergence in $\ell^1(k \geq 0)$ shows that we may take the limit \eqref{eq:SummandLimit} under the summation symbol in \eqref{eq:SummationLimit}, and therefore obtain
    \begin{equation}\label{eq:CVPressure2}
        \chi_R (x) \big( \Psi * (u_n \otimes u_n)(t,x) \big) \longrightarrow \chi_R (x) \big( \Psi * (u \otimes u) (t,x) \big) \qquad \text{a.e.}
    \end{equation}
    Once again, since the function $\chi_R$ has compact support, the almost everywhere convergence implies that \eqref{eq:CVPressure2} actually holds in the $L^1([0, T[\times \R^2)$ topology. By adding both convergences \eqref{eq:CVPressure1} and \eqref{eq:CVPressure2}, we obtain the desired property
    \begin{equation*}
        \nabla \Pi (u_n \otimes u_n) \longrightarrow \nabla \Pi (u \otimes u) \qquad \text{in } \mc D' ([0, T[ \times \R^2).
    \end{equation*}
    Together with \eqref{eq:EasyCV}, this shows that $u$ is in fact a Yudovich solution of the Euler equations associated to the initial datum $u_0$, as in Definition \ref{d:YudovichSolution}. The proof of existence is completed.
\end{proof}

\section{Uniqueness of Yudovich Solutions}\label{s:Uniqueness}

In this Section, we study the uniqueness of Yudovich solutions. As we have explained in the introduction, the method of proof is directly inspired by the original proof of Yudovich, who obtained uniqueness for energy solutions $u \in L^\infty(L^2)$ with vorticities lying in $\omega \in L^\infty(L^1 \cap L^\infty)$. However, in the case of unbounded solutions  $u \in L^\infty_{\rm loc}(L^2_\alpha)$ without any decay restriction on the vorticity, these arguments need some heavy adaptation, as none of the integrals involved in the original proof of Yudovich actually exist. 

\medskip

In addition, the log-Lipschitz regularity of the velocity field ($\nabla u$ is in ${\rm BMO}$) creates further problems when seeking stability estimates: because of it perturbations of the initial datum propagate at high speed in the fluid, and this has made it impossible for us to show that the initial data to solution map  $L^2_\alpha \longrightarrow L^2_\alpha$ is continuous. Instead, we prove that it H\"older continuous on bounded sets in $Y_\alpha$ in the topology of  $L^2_\gamma \longrightarrow L^2_\gamma$  for any $\gamma > \alpha$ such that $1 - \alpha - \gamma > 0$. This implies (non uniform)  continuity in the $L^2_{\alpha}$  topology on bounded sets of $Y_\alpha$ intersected with 
$ L^2_{\alpha, 0}$.

\begin{thm}\label{t:UniquenessYudovich}
Consider two Yudovitch solutions $u_1, u_2 \in L^\infty_{\rm loc}(Y_\alpha)$ (see Definition \ref{d:YudovichSolution}) associated to the initial data $u_{0, 1}, u_{0, 2} \in Y_\alpha$ such that the respective pressures are given by the integral representation \eqref{eq:PressureYudovichDef}. We also define 
\begin{equation*}
    K(t) = \sup_{[0, t]} \Big( \| u_1 \|_{Y_\alpha} + \| u_2 \|_{Y_\alpha} \Big)
\end{equation*}
which in Corollary \ref{c:algebraic}  is  bounded by a function of $\Vert u_{0,1} \Vert_{Y_\alpha} + \Vert u_{0,2} \Vert_{Y_\alpha} $and a constant $C(\alpha, t)$. The following statements hold:
\begin{enumerate}[(i)]
    \item For any $\gamma > \alpha$ such that $1 - \alpha - \gamma > 0$ and any time $t > 0$, the initial data to solution map $v_0 \longmapsto v(t)$ is $L^2_\gamma \longrightarrow L^2_\gamma$ uniformly continuous on bounded sets of $Y_\alpha$. More precisely, we have the following local Hölder estimate
    \begin{equation}\label{eq:localHolder}
        \| u_2(t) - u_1(t) \|_{L^2_\gamma} \lesssim \| u_{0, 2} - u_{0, 1} \|_{L^2_\gamma}^{ e^{-tCK(t)}} K(t)^{1 - e^{-tCK(t)}},
    \end{equation}
    where the constants in the above only depend on $\alpha$ and $\gamma$.

    \item For every $B > 0$, define the set $Y_B$ by 
    \begin{equation*}
        Y_B := \Big\{ u_0 \in L^2_\alpha, \quad \| \omega_0 \|_{L^\infty} \leq B \Big\}.
    \end{equation*}
    Then for every $u_0 \in Y_B \cap L^2_{\alpha, 0}$ (see Definition \ref{d:MorreySpace} for $L^2_{\alpha, 0}$) the initial data to solution map $v_0 \longmapsto v(t)$ is continuous at $u_0$ in the metric space $(Y_B, \| \, . \, \|_{L^2_\alpha})$ and we have $u(t) \in L^2_{\alpha, 0}$ (see Remark \ref{r:VanishPerturbation}).
\end{enumerate}
\end{thm}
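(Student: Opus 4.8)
The plan is to run Yudovich's energy method, but localized and measured in a Morrey norm whose growth exponent $\beta_t$ is allowed to drift with time. Writing $w = u_2 - u_1$ and using that both velocities are divergence free, bilinearity of $\nabla \Pi$ gives the difference equation
\begin{equation*}
\partial_t w + u_2 \cdot \nabla w + w \cdot \nabla u_1 + \nabla \big( \Pi(u_2 \otimes w) + \Pi(w \otimes u_1) \big) = 0 .
\end{equation*}
I would test this against $\chi_R w$, with $\chi_R$ as in \eqref{eq:ChiCutOff}, and track the localized energy $\tfrac12 \int \chi_R |w|^2$, then pass to a supremum over radii to obtain a differential inequality for $\| w(t) \|_{L^2_{\beta_t}}$. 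The reason the exponent cannot be frozen at a fixed value is precisely the obstruction flagged in the introduction: since $u_2$ grows like $|x|^{\frac{1+\alpha}{2}}$ (Lemma \ref{l:interpolationInequality}) and is only log-Lipschitz, the convective flux $-\tfrac12 \int |w|^2\, u_2 \cdot \nabla \chi_R$ couples the perturbation energy at scale $R$ to strictly larger scales, so a fixed-exponent estimate leaks and does not close; letting $\beta_t$ increase (while staying below $1-\alpha$, see below) absorbs this transfer of energy from large scales.

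\textbf{The three contributions.} The heart of the matter is controlling the three terms in the balance. The convective flux is handled exactly as in the proof of Theorem \ref{t:EnergyBound}, using $\| u_2 \|_{L^\infty_{\frac{1+\alpha}{2}}}$ from \eqref{eq:LinftyAlphaPlus}. The vortex-stretching term $\int \chi_R (w \cdot \nabla u_1) \cdot w$ is the classical Yudovich term: I would bound it by $\| \nabla u_1 \|_{L^p(B_{2R})} \| w \|_{L^{2p'}(B_{2R})}^2$ for a large finite $p$, localizing the vorticity $\omega_1$ to $B_{CR}$ and invoking Theorem \ref{t:czo} to get $\| \nabla u_1 \|_{L^p(B_{2R})} \lesssim \frac{p^2}{p-1}\,\|\omega_1\|_{L^\infty} R^{2/p}$ (plus a far-field tail from $\nabla^3(-\Delta)^{-1}$), the linear-in-$p$ growth of the constant being exactly what fuels the Osgood argument, and then interpolating $\| w \|_{L^{2p'}}^2$ between $L^2$ and $L^\infty$ and re-expressing in Morrey norms. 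The pressure term $\int \chi_R w \cdot \nabla(\Pi(u_2 \otimes w) + \Pi(w \otimes u_1))$ is the most delicate and is the main obstacle: I would split $\nabla \Pi$ as in Definition \ref{d:pressureOperator}, treating the near-diagonal piece (localized by $\eta_R$ from \eqref{eq:etaR}) with the $L^p$ Calderón--Zygmund bound of Theorem \ref{t:czo}, and the far piece with the dyadic decomposition \eqref{eq:dydadicSplittingR3Existence} and the kernel bound behind \eqref{eq:ThirdIntegralEstimate}, now applied to the bilinear differences $u_2 \otimes w$ and $w \otimes u_1$ rather than to $u \otimes u$.

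\textbf{Closing the estimate.} Collecting the bounds and taking the supremum over $r \ge R$ yields, for $y(t) = \| w(t) \|_{L^2_{\beta_t}}^2$, a family of differential inequalities indexed by $p$, schematically $y'(t) \lesssim K(t)\, p\, y(t)^{1-1/p} M^{1/p}$ with $M$ a fixed power of $K(t)$. Optimizing over $p$ converts these into a single inequality with modulus of continuity $y \mapsto y \log(1/y)$, i.e. an Osgood inequality, whose integration produces the double-exponential contraction rate. Interpolating (Lemma \ref{l:interpolationMorrey}) the resulting bound in $L^2_{\beta_t}$ against the uniform $L^2_\alpha$ control $\| w \|_{L^2_\alpha} \lesssim K(t)$ from Corollary \ref{c:algebraic} then delivers the explicit estimate \eqref{eq:localHolder} in the fixed exponent $\gamma$, with Hölder exponent $e^{-tCK(t)}$. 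The constraint $1 - \alpha - \gamma > 0$ enters precisely here: Hölder puts $w \otimes u_1$ in $L^1$ locally with growth $\gamma + \alpha$, which against the decaying kernel $|y|^{-3}$ produces the dyadic sum $\sum_k (2^k R)^{\gamma + \alpha - 1}$, convergent exactly when $\gamma + \alpha < 1$; this is also the condition that must be maintained by $\beta_t$ as it drifts.

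\textbf{Part (ii): continuity at $u_0 \in L^2_{\alpha,0}$.} This is deduced from (i) together with the tail-propagation built into Theorem \ref{t:EnergyBound}. First, that $u(t) \in L^2_{\alpha,0}$ follows from the vanishing-tail characterization and the integral inequality \eqref{eq:IntIneq1}: its right-hand side is $\| u_0 \|_{L^2_\alpha(R)}^2$ plus terms carrying negative powers of $R$, all of which tend to $0$ as $R \to \infty$ once $u_0 \in L^2_{\alpha,0}$, so $\| u(t) \|_{L^2_\alpha(R)} \to 0$. For continuity, given $\epsilon > 0$ I would split $\| u(t) - v(t) \|_{L^2_\alpha}$ into the range $1 \le r \le R_0$ and the tail $r > R_0$. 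On the bounded range the $L^2_\alpha$ and $L^2_\gamma$ norms are comparable up to $R_0^{\gamma - \alpha}$, so part (i) makes this piece small for $\| u_0 - v_0 \|_{L^2_\alpha}$ (hence $\| u_0 - v_0 \|_{L^2_\gamma}$) small, with $K(t)$ uniform on $Y_B$. For the tail I would bound $\| u(t) - v(t) \|_{L^2_\alpha(R_0)} \le \| u(t) \|_{L^2_\alpha(R_0)} + \| v(t) \|_{L^2_\alpha(R_0)}$ and apply \eqref{eq:IntIneq1} once more: since $u_0 \in L^2_{\alpha,0}$ and $\| u_0 - v_0 \|_{L^2_\alpha} \le \delta$, both initial tails $\| u_0 \|_{L^2_\alpha(R_0)}$ and $\| v_0 \|_{L^2_\alpha(R_0)}$ are small for $R_0$ large and $\delta$ small, and that smallness propagates to time $t$. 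Choosing $R_0$ large, then $\delta$ small, closes the argument.
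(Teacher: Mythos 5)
Your overall architecture matches the paper's: localized Yudovich energy estimates for $w = u_2-u_1$ in a Morrey norm with a drifting exponent $\beta_t$, Calderón--Zygmund bounds with constant linear in $p$, optimization over $p$ leading to an Osgood-type inequality, interpolation back to $L^2_\gamma$, and part (ii) argued via the tail propagation of \eqref{eq:IntIneq1} exactly as in the paper. However, there is a genuine gap in the way you close the estimate, and you have misdiagnosed where the difficulty sits. The convective flux $\int |w|^2\, u_2\cdot\nabla\chi_R$ is harmless: after normalization it carries a factor $R^{-\frac{1-\alpha}{2}}$ and closes for every admissible $\beta$. The term that leaks is the vortex-stretching term: after interpolating $\||w|^2\|_{L^{p'}(B_{2R})} \le \|w\|_{L^2(B_{2R})}^{2(1-1/p)}\|w\|_{L^\infty(B_{2R})}^{2/p}$ you must bound $\|w\|_{L^\infty(B_{2R})}$ by $R^{\frac{1+\alpha}{2}}K(t)$ (Lemma \ref{l:interpolationInequality}), and dividing by $R^{2+2\beta}$ leaves an uncompensated factor $R^{\frac{2}{p}\left(\frac{1+\alpha}{2}-\beta\right)}$. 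Your schematic inequality $y' \lesssim K\,p\,y^{1-1/p}M^{1/p}$ with ``$M$ a fixed power of $K(t)$'' is therefore only correct when $\beta \ge \frac{1+\alpha}{2}$, which is compatible with the pressure constraint $\beta < 1-\alpha$ only for $\alpha < 1/3$. For $\tfrac13 \le \alpha < \tfrac12$ no admissible constant $\beta$ exists, and this is precisely the regime the theorem must cover.

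The missing idea is the mechanism by which the drift of $\beta_t$ cancels this residual growth. One chooses $p = p(R) \approx \log\big(18K^2 R^{(\frac{1+\alpha}{2}-\beta)_+}/\|w\|_{L^2_\beta}^2\big)$, which tames $R^{2(\cdots)/p}$ to a bounded factor at the price of an extra $\log R$ in the Osgood modulus; then setting $\beta_t = \gamma + At$ produces, after dividing by $R^{2+2\beta_t}$ and integrating in time, a factor $e^{2A(s-t)\log R}$ under the integral, and $\int_0^t e^{2A(s-t)\log R}\log R\,{\rm d}s \le \tfrac{1}{2A}$ converts the logarithmic growth into a small constant that can be absorbed into the left-hand side by taking $A \sim K(T)/(1-\alpha-\gamma)$, on a time interval $t_1 \sim (1-\alpha-\gamma)^2/K(T)$ short enough that $\beta_{t_1} < 1-\alpha$ is preserved. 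One then iterates over $O(TK(T))$ such intervals, interpolating back to $L^2_\gamma$ at each step, to arrive at \eqref{eq:localHolder} with exponent $e^{-CTK(T)}$. Without this step your argument establishes the theorem only for $\alpha < 1/3$.
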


\begin{comment}
\begin{rmk}
    Although it is true that the initial data to solution map is a continuous
    \begin{equation*}
        \big( Y_B \cap L^2_{\alpha, 0}, \| \, . \, \|_{L^2_{\alpha, 0}} \big) \longrightarrow \big( Y_B \cap L^2_{\alpha, 0}, \| \, . \, \|_{L^2_{\alpha, 0}} \big)
    \end{equation*}
    map, Theorem \ref{t:UniquenessYudovich} is slightly stronger: for every $u_0 \in Y_B \cap L^2_{\alpha, 0}$, the initial data to solution map is $L^2_\alpha$-continuous at $u_0$ under $Y_B \cap L^2_\alpha$ perturbations (and not only $L^2_{\alpha, 0}$ perturbations). In other words,
    \begin{equation*}
        \forall u_0 \in Y_B \cap L^2_{\alpha, 0}, \forall \epsilon > 0, \exists \delta > 0, \forall v_0 \in Y_B, \qquad \| u_0 - v_0 \|_{L^2_\alpha} \leq \delta \Rightarrow \big\| u(t) - v(t) \big\|_{L^2_\alpha} \leq \epsilon.
    \end{equation*}
\end{rmk}
\end{comment}

\begin{proof}[Proof (of Theorem \ref{t:UniquenessYudovich})]
We denote $w = u_2 - u_1$ and $w_0 = u_{0, 2} - u_{0, 1}$. We also consider a cut-off function $\chi \in \mc D$ as in \eqref{eq:cutOff} and define, for any $R \geq 1$,
$$
\chi_R(x) := \chi \left( \frac{x}{R} \right), \qquad \text{so that } {\rm supp}(\chi_R) \subset B_{2R}.
$$
Our goal is to find stability estimates satisfied by the difference function $w$ in some Morrey space. However, as we will see, it is not possible to find $L^2_\alpha$ bounds on $w$ by our method. Instead, we will have to resort to weaker $L^2_\beta$ bounds with the growth rate $\beta > \alpha$ possibly being a function of time. Therefore, we consider a growth rate $\beta > \alpha$  which we will fix later on and take the difference of the equation solved by $u_2$ and the one solved by $u_2$, we see that 
$$
\partial_t w + (u_2 \cdot \nabla) w + (w \cdot \nabla) u_1 + \nabla \Pi (u_2 \otimes w) + \nabla \Pi (w \otimes u_1) = 0.
$$
In order to evaluate the $L^2_\beta$ norm of $w$, we study the energy balance on a ball of radius $R \geq 1$. By multiplying the equation above with $\chi_R w$ and integrating, we obtain
\begin{multline*}
\frac{1}{2} \frac{\rm d}{\dt} \int \chi_R |w|^2 \dx + \int \chi_R (u_2 \cdot \nabla) w \cdot w \dx + \int \chi_R (w \cdot \nabla) u_1 \cdot w \dx \\
+ \int \chi_R \nabla \Pi (u_2 \otimes w) \cdot w \dx + \int \chi_R \nabla \Pi (w \otimes u_1) \cdot w \dx = 0.
\end{multline*}
By noting $I_1, I_2, I_3$ and $I_4$ the last four integrals in the preceding energy balance equation, we obtain
$$
\frac{1}{2} \frac{\rm d}{\dt} \int \chi_R |w|^2 \dx \leq |I_1| + |I_2| + |I_3| + |I_4|,
$$
from which we will attempt to get a differential inequality. Let us look at all four integrals separately.

\medskip

\textbf{First integral} $I_1$. We proceed to an integration by parts in $I_1$. Because both $u_2$ and $w$ are divergence-free, the derivative falls on the cut-off function, so we get
$$
I_1 = \frac{1}{2} \int \chi_R \D (u_2 |w|^2) \dx = - \int \nabla \chi_R \cdot u_2 |w|^2 \dx.
$$
Now,  because of the definition of $\chi_R$, the derivative $\nabla \chi_R$ is uniformly $O(R^{-1})$. And because $\nabla \chi_R$ is supported on the ball $B_{2R}$, we get
\begin{equation*}
\begin{split}
    |I_1| \lesssim \frac{1}{R} \| u_2 \|_{L^\infty(B_{2R})} \int_{B_{2R}} |w|^2 \dx & \lesssim \frac{R^{2 + 2\beta}}{R^{1 - \alpha}} \left( \frac{1}{R^\alpha} \| u_2 \|_{L^\infty (B_{2R})} \right) \left[ \frac{1}{R^{2 + 2 \beta}} \int_{B_{2R}} |w|^2 \dx \right] \\
    & \lesssim \frac{R^{2 + 2\beta}}{R^{1 - \alpha}} \left( \frac{1}{R^\alpha} \| u_2 \|_{L^\infty (B_{2R})} \right) \| w \|_{L^2_\beta}^2.
\end{split}
\end{equation*}
Of course, the quantity in the parenthesis could be bounded by the $L^\infty_\alpha$ norm of $u_2$. However, recall that we do not have $L^\infty_\alpha$ estimates at our disposal for the solution. Instead, Lemma \ref{l:interpolationInequality} shows that $Y_\alpha \subset L^\infty_{(1 + \alpha)/2}$, so that the inequality becomes
\begin{equation}\label{eq:UniquenessI1}
    |I_1| \lesssim \frac{R^{2 + 2 \beta}}{R^{\frac{1 - \alpha}{2}}} \| u_1 \|_{L^\infty_{\frac{1 + \alpha}{2}}} \| w \|^2_{L^2_\beta} \lesssim \frac{R^{2 + 2 \beta}}{R^{\frac{1 - \alpha}{2}}} \| u_1 \|_{Y_\alpha} \| w \|^2_{L^2_\beta}
\end{equation}

\medskip

\textbf{Second integral} $I_2$. As in the case of ``traditional'' Yudovich solutions, this term is delicate because it does not follow from $\omega_1 \in L^\infty$ that we must have $\nabla u_1 \in L^\infty$. In fact, the case of unbounded solutions is somewhat worse because $\omega_1$ is not even $L^p$ for any $p < \infty$, so that Calder\`on-Zygmund theory does not immediately apply to give bounds on $\nabla u_1$. The key in this step of the argument is to write a localized version of the Biot-Savart law: let $\theta, \eta \in \mc D$ be a cut-off functions as in \eqref{eq:cutOff}, and set, as in the proof of existence,
$$
\eta_R(x) := \eta \left( \frac{x}{5R} \right),
$$
so that $\eta_R$ is supported in $B_{10R}$ and $\theta$ in the fixed ball $B_2$. We will use the following decomposition of $\nabla u$, which is similar to the decomposition of the pressure force we have used above.

\begin{lemma}\label{l:BiotSavart}
Consider $u \in L^2_\alpha$. We have the Biot-Savart type law
$$
\nabla u = - \nabla \nabla^\perp \Big( \big( E  \curl\big) * (\eta_R u)\Big)  + \nabla  \nabla^\perp \Big( \big( (\theta E) \curl\big)  * \big( (1 - \eta_R) u \big)\Big)  + \Big(\nabla \Delta \big( (1 - \theta) E \big)\Big) * \big( (1 - \eta_R) u \big).
$$
In the above, all the terms are well-defined as distributions.
\end{lemma}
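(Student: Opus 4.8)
The plan is to view this as a rigorous, localized form of the two–dimensional Biot–Savart law. Formally, since $u$ is divergence free, the identity $\Delta = \nabla\div + \nabla^\perp\curl$ gives $\nabla u = \nabla\Delta^{-1}\nabla^\perp\curl u = -\nabla\nabla^\perp\big(E*\curl u\big)$, where $-\Delta E = \delta_0$. The trouble is that $E*\curl u = E*\omega$ does not converge for a general $u\in L^2_\alpha$ (the kernel $E$ grows logarithmically while $\omega$ is merely bounded), so this representation is purely formal. The whole content of the lemma is to rewrite the right-hand side as a sum of three convergent pieces by inserting the spatial cut-off $\eta_R$ (to isolate the vorticity near the origin) and the kernel cut-off $\theta$ (to separate the local singularity of $E$ from its slow growth at infinity). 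I would therefore first record this formal identity, then carry out the decomposition, and last justify convergence of every term.

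First I would establish well-definedness of the three terms, which is where the cut-offs pay off. For the first term, $\curl(\eta_R u)=\eta_R\omega+\nabla^\perp\eta_R\cdot u$ lies in $L^2$ with support in $B_{10R}$, and since $E\in L^1_{\rm loc}$ the convolution $E*\curl(\eta_R u)$ is a well-defined locally integrable function to which the derivatives $\nabla\nabla^\perp$ are applied in $\mathcal D'$. For the second term, the kernel $\theta E$ is compactly supported in $B_2$, so its convolution against the distribution $\curl\big((1-\eta_R)u\big)$ is well-defined. The crucial structural observation is reserved for the third term: because $E$ is harmonic on $\R^2\setminus\{0\}$ and $\theta\equiv1$ near the origin, $\Delta\big((1-\theta)E\big)$ is a smooth, compactly supported function (supported in the transition annulus of $\theta$); hence $\nabla\Delta\big((1-\theta)E\big)$ is a smooth compactly supported kernel, and its convolution with $(1-\eta_R)u\in L^2_{\rm loc}\subset\mathcal S'$ defines a genuine smooth function. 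This simultaneously explains why the far part of the kernel is rewritten with a full Laplacian rather than with the individual third derivatives $\nabla^3$ used for the pressure: only the combination $\Delta$ benefits from the harmonicity of $E$ and yields a compactly supported kernel.

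For the identity itself I would argue by reduction to a convenient dense class. Because all three right-hand terms are continuous as maps $L^2_\alpha\to\mathcal D'$ (the first being local, the other two given by convolution against a fixed compactly supported kernel) and $u\mapsto\nabla u$ is likewise continuous, it suffices to prove the equality for divergence-free fields $u$ that are smooth with rapidly decaying gradient — for instance $u=\nabla^\perp\psi$ with $\psi$ a truncation and mollification of the stream function — where every convolution converges absolutely and all integrations by parts are licit. For such $u$ one starts from $\nabla u=-\nabla\nabla^\perp(E*\curl u)$, splits $u=\eta_R u+(1-\eta_R)u$ inside the curl and $E=\theta E+(1-\theta)E$ in the kernel, and rewrites the $(1-\theta)E$ contribution using $\nabla^\perp\curl=\Delta-\nabla\div$ so as to move all derivatives onto the now compactly supported kernel. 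One then passes to the limit along the approximating sequence using the continuity just mentioned.

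The main obstacle is the bookkeeping of the correction terms created by differentiating the cut-offs. The pieces $\eta_R u$ and $(1-\eta_R)u$ are not individually divergence free — their divergences are $\pm\nabla\eta_R\cdot u$, supported in the annulus $\{5R\le|x|\le 10R\}$ — so moving the curl onto the kernel via $\nabla^\perp\curl=\Delta-\nabla\div$ generates spurious $\nabla\div$ contributions. The delicate point is to check, keeping careful track of signs and of which pieces are left in the exact $\nabla\nabla^\perp(\,\cdot\,\ast\curl)$ form versus rewritten through $\Delta$, that these divergence corrections either cancel among the three terms or are absorbed into them, the cancellation ultimately resting on $\div u=0$ (equivalently $\div(\eta_R u)+\div((1-\eta_R)u)=0$). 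Getting this assembly exactly right — including pinning down the precise meaning of the composite symbols $\big(E\curl\big)\ast$ and $\big((\theta E)\curl\big)\ast$ — is the crux of the computation; the convergence and continuity arguments above are comparatively routine once the compact-support structure of each kernel is in hand.
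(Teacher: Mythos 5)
Your overall strategy is the same as the paper's: the paper proves the identity for $u\in\mc D(\R^2;\R^2)$ starting from $\nabla u = -\nabla(-\Delta)^{-1}\nabla^\perp\curl u$, splits the argument with $\eta_R$ and the kernel with $\theta$, and then invokes the unique weak-$(*)$ continuous extension of each term to $L^2_\alpha$. Your discussion of why each of the three terms converges is correct and in fact more detailed than the paper's. The problem is that you stop exactly at the step you yourself call the crux, and the specific route you propose for it would not close. If you rewrite the far-field piece using $\nabla^\perp\curl = \Delta - \nabla\div$, you obtain
\begin{equation*}
-\nabla\nabla^\perp\Big(\big((1-\theta)E\big)*\curl v\Big) \;=\; -\big(\nabla\Delta((1-\theta)E)\big)*v \;+\; \big(\nabla^2((1-\theta)E)\big)*\big(\div v\big), \qquad v=(1-\eta_R)u,
\end{equation*}
and the correction term does not disappear: $\div\big((1-\eta_R)u\big) = -\nabla\eta_R\cdot u$ is a nonzero function supported in the annulus $\{5R\le|x|\le 10R\}$, and there is nothing for it to cancel against, since the other two terms of the decomposition are kept in the exact $\nabla\nabla^\perp\big(K*\curl(\cdot)\big)$ form and generate no divergence corrections. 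So the identity as you plan to derive it acquires an extra term absent from the statement.

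The way out — and what the paper implicitly does — is to avoid $\nabla^\perp\curl=\Delta-\nabla\div$ altogether and simply commute all three derivatives onto the kernel componentwise: writing $K*\curl v = \partial_1 K * v_2 - \partial_2 K * v_1$, one gets $\nabla\nabla^\perp\big(K*\curl v\big) = \sum_k \big(\nabla\nabla^\perp\partial^\perp_k K\big)*v_k$, a matrix of \emph{third derivatives} of $K=(1-\theta)E$, with no divergence of $v$ appearing and no hypothesis on $\div v$ needed. This also corrects a misreading in your well-definedness discussion: your (true) observation that $\Delta\big((1-\theta)E\big)$ is compactly supported by harmonicity would make the third kernel supported in $B_2$, whence the third term would be supported in $\{|x|\ge 3R\}$ and would vanish identically against $\chi_R$ in the $I_2$ estimate — which is not how the lemma is used. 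The intended kernel is the full third-derivative matrix $\nabla^3\big((1-\theta)E\big)$, smooth and decaying like $|x|^{-3}$ but \emph{not} compactly supported, exactly as the paper estimates it afterwards; the label $\nabla\Delta\big((1-\theta)E\big)$ is shorthand for that symbol, valid on divergence-free inputs. Finally, do not be troubled if your careful computation produces the second and third terms with signs opposite to those displayed in the statement: only absolute values of these three terms are used downstream, so the signs are immaterial to the rest of the argument.
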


\begin{proof}[Proof (of the Lemma)] Let $ u \in \mc D (\R^2; \R^2)$. Then 
\[ \begin{split} \nabla u\, &  = -\nabla  ( (-\Delta)^{-1}) \nabla^\perp)  \curl u  \\ &  =  -\nabla  ( (-\Delta)^{-1}) \nabla^\perp)  \curl ( \eta_R u )  -\nabla  ( (-\Delta)^{-1}) \nabla^\perp)  \curl ((1-\eta_R) u )  \\ &  = 
- \nabla \nabla^\perp \Big( \big( E  \curl\big) * (\eta_R u)\Big) 
 + \nabla  \nabla^\perp \Big( \big( (\theta E) \curl\big)  * \big( (1 - \eta_R) u \big)\Big)  \\
 & \qquad \qquad \qquad \qquad \qquad \qquad \qquad \qquad \qquad \qquad + \Big(\nabla \Delta \big( (1 - \theta) E \big)\Big) * \big( (1 - \eta_R) u \big).
 \end{split}
\]
Each term on the right hand side has a unique weak-$(*)$ continuous extension 
to $L^2_\alpha$.
\end{proof}

By using Lemma \ref{l:BiotSavart} in $I_2$, we get a decomposition into three three integrals:
\begin{multline}\label{eq:UniquenessEQ1}
|I_2| \leq \int \chi_R \left| \nabla \nabla^\perp (\theta E) * \curl \big( (1 - \eta_R) u_1 \big) \right| |w|^2 \dx  \\
+ \int \chi_R \left| \nabla \Delta \big( (1 - \theta) E \big) * \big( (1 - \eta_R) u_1 \big) \right| |w|^2 \dx \\
+  \int \chi_R \left| \nabla \nabla^\perp (- \Delta)^{-1} \curl (\eta_R u_1) \right| |w|^2 \dx.
\end{multline}

This decomposition allows us to estimate the integral $I_2$ in a similar way we treated the pressure integral in the proof of Theorem \ref{t:EnergyBound}. We start by looking at the first integral in \eqref{eq:UniquenessEQ1}. The convolution appearing in it is supported in the set
\begin{equation}\label{eq:UniquenessEQ3}
\begin{split}
{\rm supp} \left[ \nabla \nabla^\perp (\theta E) * \curl \big( (1 - \eta_R) u_1 \big) \right] & \subset {\rm supp} (\theta E) + {\rm supp} \big( (1 - \eta_R) u_1 \big) \\
& \subset B_2 + {}^c B_{5R} \subset \{ |x| \geq 3R \},
\end{split}
\end{equation}
and, in particular, the support of the convolution product above is disjoint from the support of $\chi_R$ (which is included in $B_{2R}$). We deduce that the first integral in \eqref{eq:UniquenessEQ1} must be zero:
$$
\int \chi_R \left| \nabla \nabla^\perp (\theta E) * \curl \big( (1 - \eta_R) u_1 \big) \right| |w|^2 \dx = 0.
$$

Next, we study the second integral in \eqref{eq:UniquenessEQ1}. The kernel $\nabla \Delta \big( (1 - \theta) E \big)$ is smooth on $\R^2$ and decays as $O(|x|^{-3})$, so that we have
$$
J(x) := \left| \nabla \Delta \big( (1 - \theta) E \big) * \big( (1 - \eta_R) u_1 \big) (x) \right| \lesssim \int |u_1(y)| \mathds{1}_{|y| \geq 5R} \frac{\dy}{\langle x-y \rangle^3}.
$$
We cut this integral into two parts, according to whether $|x|$ is small compared to $|y|$ (so that $x - y \approx y$) or not (so that $|x-y| \lesssim |x|$). In other words, we have
\begin{equation*}
    J(x) \lesssim \int_{|y| \leq 2|x|} |u_1(y)| \mathds{1}_{|y| \geq 5R} \frac{\dy}{\langle x-y \rangle^3} + \int_{|y| \geq 2 |x|} |u_1(y)| \mathds{1}_{|y| \geq 5R} \frac{\dy}{\langle x-y \rangle^3}.
\end{equation*}
On the one hand, we see that this first integral is supported on the set $\{ 5R \leq |y| \leq 2|x| \}$. Consequently, when $|x| \leq 2R$, it must be zero, so it does not contribute to $I_2$. For the second integral, we use the inequality $|x - y| \geq \frac{1}{2}|y|$, which holds on $\{ |y| \geq 2|x| \}$, to write
\begin{equation*}
    \chi_R(x) J(x) \lesssim \int |u_1(y)| \mathds{1}_{|y| \geq 5R} \frac{\dt}{\langle y \rangle^3}.
\end{equation*}
To evaluate this integral by using the Morrey norm $\| u_1 \|_{L^2_\alpha}$, we recall the dyadic partition of the space \eqref{eq:dydadicSplittingR3Existence} and proceed similarly to \eqref{eq:SumBoundsExistence}. We have
\begin{equation*}
    \begin{split}
        \chi_R(x) J(x) & \lesssim \sum_{k=0}^\infty \int_{A_k(R)} |u_1(y)| \frac{\dy}{\langle y \rangle^3} \\
        & \lesssim \sum_{k=0}^\infty \| u_1 \|_{L^2(B_{2^k R})} (2^k R)^{-3} |A_k(R)|^{1/2} \\
        & \lesssim \| u_1 \|_{L^2_\alpha} \sum_{k = 0}^\infty (2^k R)^{1 + \alpha - 3 + 1} \\
        & \lesssim \frac{1}{R^{1 - \alpha}} \| u_1 \|_{L^2_\alpha}.
    \end{split}
\end{equation*}
By plugging this inequality in the second integral in \eqref{eq:UniquenessEQ1}, we see that it is simply bounded by
\begin{equation}\label{eq:UniquenessEQ5}
\begin{split}
\int \chi_R \left| \nabla \Delta \big( (1 - \theta) E \big) * \big( (1 - \eta_R) u_1 \big) \right| |w|^2 \dx & \lesssim \frac{1}{R^{1 - \alpha}} \| u_1 \|_{L^2_\alpha} \int \chi_R |w|^2 \dx \\
& \lesssim \frac{R^{2 + 2 \beta}}{R^{1 - \alpha}} \| u_1 \|_{L^2_\alpha} \left[ \frac{1}{R^{2 + 2 \beta}} \int_{B_{2R}} |w|^2 \dx \right] \\
& \lesssim\frac{R^{2 + 2 \beta}}{R^{1 - \alpha}} \| u_1 \|_{L^2_{\alpha}} \| w \|_{L^2_\beta}^2.
\end{split}
\end{equation}

Finally, we are left with the third integral in \eqref{eq:UniquenessEQ1}, where we restrict our attention to the localized quantity $\eta_R u_1$. Here, we simply implement the ideas of Yudovich and his use of Calder\`on-Zygmund theory: for any $2 \leq p < \infty$, Theorem \ref{t:czo} provides the bound
\begin{equation*}
\begin{split}
\big\| \nabla \nabla^\perp (- \Delta)^{-1} \curl (\eta_R u_1) \big\|_{L^p} & \lesssim p \| \curl(\eta_R u_1) \|_{L^p} \\
& \lesssim p \| \eta_R \omega_1 \|_{L^p} + p \| \nabla^\perp \eta_R \cdot u_1 \|_{L^p} \\
& \lesssim p R^{2/p} \| \omega_1 \|_{L^\infty} + p \frac{1}{R} \| \nabla \eta_1 \|_{L^\infty} R^{2/p} \| u_1 \|_{L^\infty (B_{10R})} \\
& \lesssim p R^{2/p} \Big( \| \omega_1 \|_{L^\infty} + \frac{1}{R} \| u_1 \|_{L^\infty (B_{10R})} \Big),
\end{split}
\end{equation*}
where we have used the fact that $\eta_R$ and $\nabla \eta_R$ are supported inside the ball $B_{10R}$. In order to use this $L^p$ estimate, we introduce the dual exponent $p'$ defined by $\frac{1}{p} + \frac{1}{p'} = 1$ and invoke Hölder's inequality to obtain
\begin{multline}\label{eq:UniquenessEQ2}
\int \chi_R \left| \nabla \nabla^\perp (- \Delta)^{-1} \curl (\eta_R u_1) \right| |w|^2 \dx \\
\lesssim p R^{2/p} \Big( \| \omega_1 \|_{L^\infty} + \frac{1}{R} \| u_1 \|_{L^\infty (B_{10R})} \Big) \left\| |w|^2 \right\|_{L^{p'}(B_{2R})}
\end{multline}
By using an interpolation inequality in the Lebesgue spaces $L^{2p'} \subset L^{2 \left( 1 - \frac{1}{p} \right)} \cap L^\infty$, we may bound the $L^{p'}$ norm by
$$
\left\| |w|^2 \right\|_{L^{p'}(B_{2R})} = \left\| w \right\|_{L^{2p'}(B_{2R})}^2 \leq \| w \|^{2 \left( 1 - \frac{1}{p} \right)}_{L^2(B_{2R})} \, \| w \|_{L^\infty(B_{2R})}^{\frac{2}{p}},
$$
so that the upper bound in \eqref{eq:UniquenessEQ2} becomes
\begin{equation*}
\begin{split}
& \int \chi_R \left| \nabla \nabla^\perp (- \Delta)^{-1} \curl (\eta_R u_1) \right| |w|^2 \dx  \\
&  \qquad  \lesssim p R^{\frac{2}{p}} \Big( \| \omega_1 \|_{L^\infty} + \frac{1}{R} \| u_1 \|_{L^\infty (B_{10R})} \Big) \| w \|^{2 \left( 1 - \frac{1}{p} \right)}_{L^2(B_{2R})} \, \| w \|_{L^\infty(B_{2R})}^{\frac{2}{p}} \\
&  \qquad  \lesssim p R^{\frac{2}{p}} \Big( \| \omega_1 \|_{L^\infty} + \frac{1}{R} \| u_1 \|_{L^\infty (B_{10R})} \Big) R^{(2 \beta + 2) \left( 1 - \frac{1}{p} \right)} \left[ \frac{1}{R^{2 \beta + 2}} \int_{B_{2R}} |w|^2 \dx \right]^{1 - \frac{1}{p}} \| w \|_{L^\infty (B_{2R})}^{\frac{2}{p}} \\
& \qquad \lesssim p R^{2 + 2 \beta} \| u_1 \|_{Y_\alpha} \| w \|_{L^2_\beta}^{2 \left( 1 - \frac{1}{p} \right)} \left( \frac{1}{R^\beta} \| w \|_{L^\infty(B_{2R})} \right)^{\frac{2}{p}}.
%& \qquad  \lesssim p R^{2 \alpha + 2} \Big( \| \omega_1 \|_{L^\infty} + \frac{1}{R} \| u_1 \|_{L^\infty (B_{10R})} \Big) \| w \|_{L^\infty_\alpha}^{2/p} \| w \|_{L^2_\alpha}^{2 \left( 1 - \frac{1}{p} \right)}.
\end{split}
\end{equation*}
At this point, we use the fact that $w = u_2 - u_1$ is the difference of two functions which we know to be bounded in $Y_\alpha \subset L^\infty_{(1 + \alpha)/2}$ (see Lemma \ref{l:interpolationInequality}). By using this to bound the norm $\| w \|_{L^\infty(B_{2R})}$ we obtain
\begin{multline}\label{eq:UniquenessEQ6}
    \int \chi_R \left| \nabla \nabla^\perp (- \Delta)^{-1} \curl (\eta_R u_1) \right| |w|^2 \dx \\
    \lesssim p R^{2 + 2 \beta} \|u_1 \|_{Y_\alpha} \Big( \|u_1 \|_{Y_\alpha} + \| u_2 \|_{Y_\alpha} \Big)^{\frac{2}{p}} R^{\frac{2}{p} \left( \frac{1 + \alpha}{2} - \beta \right)} \| w \|_{L^2_\beta}^{2 \left( 1 - \frac{1}{p} \right)}.
\end{multline}
In comparison to the previous bounds \eqref{eq:UniquenessI1} or \eqref{eq:UniquenessEQ2}, the inequality above has an additional factor of $R^{\frac{2}{p} \left( \frac{1 + \alpha}{2} - \beta \right)}$. As we will see, this creates issues to close the estimates and will have to be dealt with by making an appropriate choice for the value of $\beta$. Leaving these considerations for later and putting the estimates \eqref{eq:UniquenessEQ5} and \eqref{eq:UniquenessEQ6} together, we obtain a bound for the integral $I_2$, namely
\begin{equation}\label{eq:UniquenessI2}
    |I_2| \lesssim \frac{R^{2 + 2 \beta}}{R^{1 - \alpha}} \| u_1 \|_{L^2_\alpha} \| w \|_{L^2_\beta}^2 + p R^{2 + 2 \beta} \|u_1 \|_{Y_\alpha} \Big( \|u_1 \|_{Y_\alpha} + \| u_2 \|_{Y_\alpha} \Big)^{\frac{2}{p}} R^{\frac{2}{p} \left( \frac{1 + \alpha}{2} - \beta \right)} \| w \|_{L^2_\beta}^{2 \left( 1 - \frac{1}{p} \right)}.
\end{equation}
In view of further computations, note that the first summand in the right hand side is bounded by the upper bound \eqref{eq:UniquenessI1} we have already found for $I_1$.

\medskip

\textbf{Third integral} $I_3$. We now study the first of the two pressure integrals. Just as in the case of integral $I_2$ above, we separate the pressure in a localized part and a remainder:
$$
\nabla \Pi (u_2 \otimes w) = \nabla \Pi (\eta_R u_2 \otimes w) + \nabla \Pi \big( (1 - \eta_R) u_2 \otimes w \big).
$$
On the one hand, the first of these pressure terms  gives a contribution to $I_3$ that is
$$
\left| \int \chi_R w \cdot \nabla \Pi (\eta_R u_2 \otimes w ) \dx \right| = \left| \int \nabla \chi_R \cdot w \, \Pi (\eta_R u_2 \otimes w ) \dx \right| \lesssim \frac{1}{R} \| w \|_{L^2(B_{2R})} \big\| \Pi(\eta_R u_2 \otimes w) \big\|_{L^2},
$$
where the operator $\Pi := (- \Delta)^{-1} \nabla^2$ should be understood as a product of Riesz transforms, and is as such $L^2 \tend L^2$ bounded. Consequently, the above yields
\begin{equation*}%\eqref{eq:UniquenessEQ8}
\begin{split}
\left| \int \chi_R w \cdot \nabla \Pi (\eta_R u_2 \otimes w ) \dx \right| & \lesssim \frac{1}{R} \| w \|_{L^2(B_{2R})} \big\| \eta_R u_2 \otimes w \|_{L^2} \\
& \lesssim \frac{1}{R} \| u_2 \|_{L^\infty(B_{10R})} \| w \|_{L^2(B_{10R})}^2 \\
& \lesssim \frac{R^{2 \beta + 2}}{R^{\frac{1 + \alpha}{2}}} \| u_2 \|_{L^\infty_{\frac{1 + \alpha}{2}}} \left[ \frac{1}{R^{2 \beta + 2}} \int_{B_{10 R}} |w|^2 \dx \right] \\
& \lesssim \frac{R^{2 \beta + 2}}{R^{\frac{1 - \alpha}{2}}} \| u_2 \|_{Y_\alpha} \| w \|_{L^2_\beta}^2.
\end{split}
\end{equation*}
On the other hand, for the remaining pressure term, we use the decomposition that defines the operator $\nabla \Pi$. Consider $\theta$ be a cut-off function as in \eqref{eq:cutOff}, so that Definition \ref{d:pressureOperator} provides
$$
\nabla \Pi \big( (1 - \eta_R) u_2 \otimes w \big) = \nabla (\theta E) * \nabla^2 : \big( (1 - \eta_R) u_{2} \otimes w \big) + \nabla^3 \big( (1 - \theta) E \big) * \big( (1 - \eta_R) u_2 \otimes w \big).
$$
By reiterating the arguments of \eqref{eq:UniquenessEQ3}, we see that the first summand in the righthand side above is disjointly supported from $\chi_R$, and therefore can be ignored. We then focus on the second summand and define, for notational convenience,
$$
J(x) := \nabla^3 \big( (1 - \theta) E \big) * \big( (1 - \eta_R) u_2 \otimes w \big) (x).
$$
Then, we have, by writing explicitly the value of the convolution product at a point $x \in \R^2$ and decomposing the plane as $\R^2 = \{ |y| \leq 2|x| \} \cup \{ |y| \geq 2|x| \}$,
\begin{equation}\label{eq:UniquenessEQ4}
\begin{split}
|J(x)| & \lesssim \int \big| u_2 \otimes w (y) \big| \frac{\mathds{1}_{|y| \geq 5R}}{\langle x - y \rangle^{3}} \dy \\
& \lesssim \int_{|y| \leq 2|x|} \big| u_2 \otimes w (y) \big| \frac{\mathds{1}_{|y| \geq 5R}}{\langle x - y \rangle^{3}} \dy + \int_{|y| \geq 2 |x|} \big| u_2 \otimes w (y) \big| \frac{\mathds{1}_{|y| \geq 5R}}{\langle x - y \rangle^{3}} \dy.
\end{split}
\end{equation}
The first of these two integrals is defined on the set $5 R \leq |y| \leq 2 |x|$, and therefore must vanish if $|x| \leq 5/2R$. Consequently, when $J(x)$ is multiplied by $\chi_R(x)$, this integral has no bearing whatsoever on the computations, and we may focus on the integral on $|y| \geq 2|x|$. By triangle inequality, we see that
$$
\frac{1}{\langle x - y \rangle^3} \lesssim \frac{1}{\langle y \rangle^3}
$$
whenever $|y| \geq 2 |x|$, so we are reduced to estimating an integral whose integrand depends only on $y$. Here, let us recall from \eqref{eq:dydadicSplittingR3Existence} the dyadic splitting $\bigsqcup_k A_k(R)$ of the space we used for the proof of existence. Using it to decompose the last integral in \eqref{eq:UniquenessEQ4}, we have, by the Cauchy-Schwarz inequality,
\begin{equation*}
    \begin{split}
        \big| \chi_R (x) J(x) \big| & \lesssim \sum_{k=0}^\infty \int_{A_k(R)} \big| u_2 \otimes w (y) \big| \frac{\dy}{\langle y \rangle^3} \\
        & \lesssim \sum_{k=0}^\infty \left( \int_{B_{2^{k+1}R}} |w|^2 \right)^{1/2} \left( \int_{B_{2^{k+1}R}} |u_2|^2  \right)^{1/2} (2^k R)^{-3}.
    \end{split}
\end{equation*}
We now try to bound each of these integrals by using the $L^2_\beta$ and $L^2_\alpha$ norms of $w$ and $u_2$ respectively. The above inequality becomes:
\begin{equation}\label{eq:AlphaBetaPressure}
    \begin{split}
        \big| \chi_R (x) J(x) \big| & \lesssim \| w \|_{L^2_\beta} \| u_2 \|_{L^2_\alpha} \sum_{k=0}^\infty \left( 2^{k} R \right)^{\alpha + \beta - 1}\\
        & \lesssim \frac{1}{(1 - \alpha - \beta) R^{1 - \alpha - \beta}} \| u_2 \|_{L^2_\alpha} \| w \|_{L^2_\beta}.
    \end{split}
\end{equation}
The computation above is valid provided the sum converges. This means that we are restricted to growth rates such that $\beta < 1 - \alpha$. Under this condition, we have shown that the integral involving $J$ can be bounded by
\begin{equation}\label{eq:UniquenessI3}
    \begin{split}
        \left| \int \chi_R w \cdot J \dx \right| & \lesssim D(\alpha, \beta) \frac{1}{R^{1 - \alpha - \beta}} \| w \|_{L^2_{\beta}} \| u_2 \|_{L^2_\alpha} \int \chi_R |w| \dx \\
        & \lesssim D(\alpha, \beta) \frac{R^{2 + 2 \beta}}{R^{1 - \alpha}} \| u_2 \|_{L^2_\alpha} \| w \|_{L^2_\beta}^2.
    \end{split}
\end{equation}
The constant $D(\alpha, \beta)$ appearing in the inequality above is bounded by $C(\alpha) (1 - \alpha - \beta)^{-1}$.

\medskip

\textbf{Fourth integral} $I_4$. The fourth integral can be estimated exactly in the same way as $I_3$. We have
\begin{equation}\label{eq:UniquenessI4}
|I_4| \lesssim D(\alpha, \beta) \frac{R^{2 + 2 \beta}}{R^{1 - \alpha}} \| u_1 \|_{L^2_\alpha} \| w \|_{L^2_\beta}^2.
\end{equation}
The constant $D(\alpha, \beta)$ is defined just as above.

\medskip

\textbf{Summary of the estimates}. We put together the estimates \eqref{eq:UniquenessI1}, \eqref{eq:UniquenessI2}, \eqref{eq:UniquenessI3} and \eqref{eq:UniquenessI4} in order to obtain a differential inequality. We have, by keeping in mind that $R \geq 1$,
\begin{equation}\label{eq:StabiliyRSummary}
    \begin{split}
        \frac{\rm d}{\dt} \int \chi_R |w|^2 & \lesssim D(\alpha, \beta) R^{2 + 2 \beta} \| w \|_{L^2_\beta}^2 \left( \frac{1}{R^{1 - \alpha}} \| u_1, u_2 \|_{L^2_\alpha} + \frac{1}{R^{\frac{1 - \alpha}{2}}} \| u_1, u_2 \|_{Y_\alpha} \right) \\
        & \qquad \qquad \qquad \qquad + p R^{2 + 2 \beta} \| u_1 \|_{Y_\alpha} \| u_1, u_2 \|_{Y_\alpha}^{2/p} R^{\frac{2}{p} \left( \frac{1 + \alpha}{2} - \beta \right)} \| w \|_{L^2_\beta}^{2 \left( 1 - \frac{1}{p} \right)} \\
        & \lesssim D(\alpha, \beta) R^{2 + 2 \beta} K(t) \| w \|_{L^2_\beta}^2 + p R^{2 + 2 \beta} K(t)^{1 + 2/p} R^{\frac{2}{p} \left( \frac{1 + \alpha}{2} - \beta \right)} \| w \|_{L^2_\beta}^{2 \left( 1 - \frac{1}{p} \right)},
    \end{split}
\end{equation}
where where we have defined the quantity (see Definition \ref{d:YudovichSpace} for the space $Y_\alpha$)
\begin{equation*}
    K(t) = \sup_{[0, t]} \| u_1, u_2 \|_{Y_\alpha}.
\end{equation*}
If we are to close the estimates and find $L^2_\beta$ bounds on the difference function $w$, then a supremum over all radii $R \geq$ has to be taken, in addition to the usual Yudovich argument. However, the second term in the upper bound above involves a $R^{\frac{2}{p} \left( \frac{1 + \alpha}{2} - \beta \right)}$ factor which introduces additional growth if $\beta$ is not large enough. Here, two things may happen. If $\alpha < 1/3$, a simple choice of $\beta$ will lead to the sought estimate for $w$. In the harder case where we may have $1/3 \leq \alpha < 1/2$, we will have to resort to letting $\beta$ depend on time to prove uniqueness. In both cases, we will have to resort to interpolation arguments to end the proof.

\medskip

\textbf{Easier case:} we first assume that $\alpha < 1/3$. Then it is possible to fix a constant $\beta$ such that
\begin{equation}\label{eq:betaConstantInequalities}
     \frac{1 + \alpha}{2} \leq \beta < 1 - \alpha.
\end{equation}
In particular, the pressure estimates \eqref{eq:AlphaBetaPressure} hold and the factor $R^{\frac{2}{p} \left( \frac{1 + \alpha}{2} - \beta \right)}$ is bounded as $R \geq 1$. In that case, inequality \eqref{eq:StabiliyRSummary} reduces to
\begin{equation*}
    \frac{\rm d}{\dt} \left\{ \frac{1}{R^{2 + 2 \beta}} \int \chi_R |w|^2 \right\} \lesssim K(t)\| w \|_{L^2_\beta}^2 + p K(t)^{1 + 2/p} \| w \|_{L^2_\beta}^{2 \left( 1 - \frac{1}{p} \right)}.
\end{equation*}
We then integrate on the time interval $[0, T]$ and take the supremum over all values of $R \geq 1$. This leads to an integral inequality on the $L^2_\beta$ norm:
\begin{equation*}
    \| w(T) \|_{L^2_ \beta}^2 \leq \| w_0 \|_{L^2_\beta}^2 + \int_0^T K(t) \| w \|_{L^2_\beta}^2 \dt + p K(T) \int_0^T \| w \|_{L^2_\beta}^2 \left( \frac{K(T)^2}{\| w \|_{L^2_\beta}^2} \right)^{1/p} \dt.
\end{equation*}
This inequality holds for all values of $p \in [2, \infty[$. Therefore, we may optimize it. Set
\begin{equation*}
    p := \log \left( \frac{18 K(T)^2}{\| w \|_{L^2_\beta}^2} \right).
\end{equation*}
Note that the norm $\| w \|_{L^2_\beta}$ is always bounded by the quantity $2K(T)$, so that $p$ as defined above is always at least $p \geq 2$, since $18 \geq 2 e^2$. By plugging this value of $p$ inside the integral inequality, we find that
\begin{equation}\label{eq:GGronwall}
    \| w \|_{L^2_\beta}^2 \leq Q(T) := \| w_0 \|_{L^2_\beta}^2 + C K(T) \int_0^T \| w \|_{L^2_\beta}^2 \left[ 1 + \log \left( \frac{18 K(T)^2}{\| w \|_{L^2_\beta}^2} \right) \right] \dt.
\end{equation}
In order to deduce an explicit upper bound from the above, we follow the computations given in (for instance) the proof of Lemma 3.4 of \cite{BCD}, which we reproduce here for the reader's convenience. Define the functions
\begin{equation*}
    \mu(r) := CK r \left[ 1 + \log \left( \frac{18K^2}{r} \right) \right] \qquad \text{and} \qquad \mc M(r) := \int_r^{2K} \frac{1}{\mu} = \frac{1}{CK} \log \left[ 1 + \log \left( \frac{18K^2}{r} \right) \right] + {\rm Cst}.
\end{equation*}
The function $Q(T)$ is defined so that its derivative is $Q'(T) = \mu( \| w \|_{L^2_\beta}^2 )$. And because the function $r \mapsto \mu(r)$ is increasing, inequality \eqref{eq:GGronwall} implies that the quantity $Q(T)$ satisfies the differential inequality $Q'(T) \leq \mu (Q(T))$. As a consequence, we have
\begin{equation*}
    -\frac{\rm d}{{\rm d}T} \mc M \big( Q(T) \big) = \frac{Q'(T)}{\mu \big( Q(T) \big)} \leq 1,
\end{equation*}
so that integrating provides the inequality
\begin{equation*}
    \log \left[ 1 + \log \left( \frac{18 K(T)^2}{\| w_0 \|_{L^2_\beta}^2} \right) \right] \leq CKT + \log \left[ 1 + \log \left( \frac{18 K(T)^2}{\| w \|_{L^2_\beta}^2} \right) \right],
\end{equation*}
and exponentiating twice gives the desired inequality
\begin{equation*}
    \big\| w(T) \big\|_{L^2_\beta}^2 \leq \| w_0 \|_{L^2_\beta}^{2 e^{-TCK(T)}} \left( 18 K(T)^2 \right)^{1 - e^{-TCK(T)}}.
\end{equation*}
In particular, this shows that the initial data to solution map is locally Hölder in the $L^2_\beta \longrightarrow L^2_\beta$ topology, with a Hölder exponent depending on the norm of the initial data  and on time. In order to obtain $L^2_\gamma$ continuity for any $\alpha < \gamma < 1 - \alpha$, we resort to an interpolation argument. 

\medskip

Consider a growth rate $\alpha < \gamma < 1 - \alpha$ and a $\theta \in ]0, 1[$ such that $\gamma = \theta \beta + (1 - \theta) \alpha$. Then the interpolation inequality of Lemma \ref{l:interpolationMorrey} provides
\begin{equation*}
    \begin{split}
        \| w(T) \|_{L^2_\gamma} & \leq \| w \|_{L^2_\beta}^\theta \| w \|_{L^2_\alpha}^{1 - \theta} \\
        & \lesssim K(T)^{1 - \theta + \theta(1 - e^{-CTK(T)})} \| w_0 \|_{L^2_{\beta}}^{\theta e^{-TCK(T)}} \\
        & = K(T)^{1 - \theta e^{-CTK(T)}} \| w_0 \|_{L^2_{\beta}}^{\theta e^{-TCK(T)}}.
    \end{split}
\end{equation*}
This proves that the initial data to solution map is continuous (indeed locally Hölder) in the $L^2_\beta \longrightarrow L^2_\gamma$ topology, under the bounded vorticity constraint. However, this inequality is slightly weaker than \eqref{eq:localHolder}, because of the presence of the interpolation coefficient $\theta$. We will see that we can get rid of it by using more involved estimates.

\begin{comment}
If $\beta > \gamma$, then this proves \textit{(i)} when $\alpha < 1/3$. When $\beta < \gamma$, we resort one last time to interpolation Lemma \ref{l:interpolationInequality}, and write $\beta = \theta' \gamma + (1 - \theta')\alpha$, so that
\begin{equation*}
    \begin{split}
        K(T)^{1 - \theta e^{-CTK(T)}} \| w_0 \|_{L^2_{\beta}}^{\theta e^{-TCK(T)}} & \leq K(T)^{1 - \theta e^{-CTK(T)}} \| w_0 \|_{L^2_\gamma}^{\theta \theta' e^{- CTK(T)}} \| w_0 \|_{L^2_\alpha}^{\theta (1 - \theta') e^{- CTK(T)}} \\
        & \leq K(T)^{1 - \theta \theta' e^{-CTK(T)}} \| w_0 \|_{L^2_\gamma}^{\theta \theta' e^{-CTK(T)}},
    \end{split}
\end{equation*}
which ends proving point \textit{(i)} in the case $\alpha < 1/3$.
\end{comment}

\medskip

\textbf{Harder case:} we no longer assume that $\alpha \geq 1/3$. Fix a $T > 0$. As explained above, the problem in this case is that the factor $R^{\frac{2}{p} \left( \frac{1 + \alpha}{2} - \beta \right)}$ can create a growth in the upper bound which prevents us from closing the estimates, and here we cannot fix a $\beta$ such that \eqref{eq:betaConstantInequalities} holds, because we may have $\alpha \geq 1/3$. This time, we first start by optimizing the value on the Lebesgue exponent before taking the supremum over all radii. We therefore choose for $p$ a value which depends on $R \geq 1$. We set, for any time $t \in [0, T]$,
\begin{equation*}
    p(R) := \log \left( \frac{18 K(T)^2 R^{\left( \frac{1 + \alpha}{2} - \beta\right)_+ }}{\| w(t) \|_{L^2_\beta}^2} \right).
\end{equation*}
Because we consider radii $R \geq 1$, the power of $R$ in the logarithm is at least $1$, since the exponent $\left( \frac{1 + \alpha}{2} - \beta\right)_+$ is nonnegative. Once again, this value of the exponent is larger than $p(R) \geq 2$. By plugging this into the differential inequality \eqref{eq:StabiliyRSummary}, we get the estimate
\begin{equation}\label{eq:logRuniquenessDiff}
    \frac{\rm d}{\dt} \int \chi_R |w|^2 \lesssim D(\alpha, \beta) R^{2 + 2 \beta} K(T) \| w(t) \|_{L^2_\beta}^2 \left[ 1 + \log \left( \frac{18 K(T)^2 R^{\frac{1 + \alpha}{2} - \beta}}{\| w(t) \|_{L^2_\beta}^2} \right) \right].
\end{equation}
This vastly improves the growth rate of the upper bound with respect to $R$, but it still is logarithmic. In order to get rid of that growth, we will let the rate $\beta$ depend on time. More precisely, fix a $\gamma > \alpha$ such that $1 - \alpha - \gamma > 0$ and define
\begin{equation*}
    \beta_t = \gamma + At \qquad \text{for } 0 \leq t \leq t_1
\end{equation*}
for some constant $A > 0$ which we will fix later on and some time $t_1$ which has to be small enough for the inequality $\beta_s < 1 - \alpha$ to hold for all $t \leq t_1$. Then, by integrating \eqref{eq:logRuniquenessDiff} on the time interval $[0, t] \subset [0, t_1] \subset [0, T]$ and multiplying by $R^{-2-2\beta_t}$, we obtain 
\begin{equation}\label{eq:HarderIntInequality}
    \begin{split}
        \frac{1}{R^{2 + 2 \beta_t}} & \int_{B_R} |w(t)|^2 \\
        & \leq \|w_0 \|_{L^2_\gamma}^2 + D(\alpha, \beta_{t_1}) K(T) \int_0^t R^{2A(s-t)} \| w \|_{L^2_{\beta_s}}^2 \left[ 1 + \log \left( \frac{18 K(T)^2 R^{\frac{1 + \alpha}{2} - \beta_s}}{\| w \|_{L^2_{\beta_s}}^2} \right) \right] {\rm d}s \\
        & \lesssim \| w_0 \|_{L^2_\gamma}^2 + D(\alpha, \beta_{t_1}) K(T) \int_0^t e^{2A(s-t)\log(R)} \| w \|_{L^2_{\beta_s}}^2 \left( \frac{1 + \alpha}{2} - \beta_s \right) \log (R) {\rm d} s \\
        & \qquad \qquad \qquad \qquad \qquad + D(\alpha, \beta_{t_1}) K(T) \int_0^t \| w \|_{L^2_{\beta_s}}^2 \left[ 1 + \log \left( \frac{18 K(T)^2}{\| w \|_{L^2_{\beta_s}}^2} \right) \right] {\rm d} s \\
        & \leq \| w_0 \|_{L^2_\gamma}^2 + D(\alpha, \beta_{t_1}) \frac{K(T)}{2A} \left( \frac{1+\alpha}{2} - \gamma \right) \sup_{[0, t]} \left( \| w \|_{L^2_{\beta_s}}^2 \right) \\
        & \qquad \qquad \qquad \qquad \qquad + D(\alpha, \beta_{t_1}) K(T) \int_0^t \| w \|_{L^2_{\beta_s}}^2 \left[ 1 + \log \left( \frac{18 K(T)^2}{\| w \|_{L^2_{\beta_s}}^2} \right) \right] {\rm d} s.
    \end{split}
\end{equation}
In particular, this upper bound no longer depends on the radius. By taking the supremum over $R \geq 1$ on the lefthand side, we find an estimate on $\| w \|_{L^2_{\beta_t}}$. We now focus on the first term in the righthand side. By remembering that the constant $D(\alpha, \beta_{t_1})$ is of the form $C(\alpha)(1 - \alpha - \beta_{t_1})^{-1}$, we see that it is bounded by
\begin{equation}\label{eq:absorbedTerm}
    D(\alpha, \beta_{t_1}) \frac{K(T)}{2A} \left( \frac{1+\alpha}{2} - \gamma \right) \sup_{[0, t]} \left( \| w \|_{L^2_{\beta_s}}^2 \right) \leq \frac{C(\alpha)K(T)}{A(1 - \alpha - \gamma - At_1)} \sup_{[0, t]} \left( \| w \|_{L^2_{\beta_s}}^2 \right).
\end{equation}
By taking $A > 0$ sufficiently large and $t_1 > 0$ sufficiently small, it is possible to make the fraction in the upper bound small. However, we must also insure that we also have $1 - \alpha - \beta_{t_1} > 0$, so as to not destroy the pressure estimates. We therefore fix
\begin{equation*}
    t_1 := \frac{(1 - \alpha - \gamma)^2}{16C(\alpha)K(T)} \qquad \text{and} \qquad A := \frac{8C(\alpha)K(T)}{1 - \alpha - \gamma}.
\end{equation*}
In this case, the fraction in the righthand side of \eqref{eq:absorbedTerm} is exactly $1/4$ and we also insure that the pressure remains well-defined since
\begin{equation*}
    1 - \alpha - \beta_{t_1} = \frac{1}{2}(1 - \alpha - \gamma).
\end{equation*}
With the previous choice of $A$ and $t_1$, we may absorb the term in \eqref{eq:absorbedTerm} in the lefthand side of inequality \eqref{eq:HarderIntInequality} so that it becomes 
\begin{equation*}
    \sup_{[0, t]} \left( \| w \|_{L^2_{\beta_t}}^2 \right) \lesssim \| w_0 \|_{L^2_\gamma}^2 + K(T) \int_0^t \| w \|_{L^2_{\beta_s}}^2 \left[ 1 + \log \left( \frac{18 K(T)^2}{\| w \|_{L^2_{\beta_s}}^2} \right) \right] {\rm d}s,
\end{equation*}
and we may rewrite the computations we did in the case $\alpha < 1/3$ in order to deduce a $L^2_\gamma \longrightarrow L^2_{\beta_{t_1}}$ stability estimate on the short time interval $[0, t_1]$, namely
\begin{equation*}
    \begin{split}
        \sup_{[0, t_1]} \left( \| w \|_{L^2_{\beta_{t}}} \right) & \lesssim \| w_0 \|_{L^2_\gamma}^{e^{-t_1CK(T)}} K(T)^{1 - e^{-t_1 C K(T)}} \\
        & \lesssim \| w_0 \|_{L^2_\gamma}^{e^{-C}} K(T)^{1 - e^{-C}}.
    \end{split}
\end{equation*}
The last inequality above is due to the fact that the quantity $t_1 K(T)$ only depends on $(\alpha, \gamma)$. Therefore, all constants in the above (including implicit constants) only depend on $(\alpha, \gamma)$. In order to recover a $L^2_\gamma \longrightarrow L^2_\gamma$ estimate, we interpolate: fix a $\theta \in ]0, 1[$ such that $\gamma = \theta \beta_{t_1} + (1 - \theta)\alpha$ (in particular, $\theta$ only depends on $\alpha$ and $\gamma$) and invoke interpolation Lemma \ref{l:interpolationMorrey} to write
\begin{equation*}
    \begin{split}
        \| w(t_1) \|_{L^2_\gamma} & \lesssim \| w_0 \|_{L^2_\gamma}^{\theta e^{-C}} \| w_0 \|_{L^2_\alpha}^{1 - \theta} K(T)^{\theta (1 - e^{-C})}\\
        & \lesssim \| w_0 \|_{L^2_\gamma}^{\theta e^{-C}} K(T)^{1 - \theta e^{-C}}.
    \end{split}
\end{equation*}
Finally, we iterate this inequality step by step in time to get an estimate on $w(T)$. Consider an integer $n \geq 1$ such that $n t_1 \geq T$, or in other words, $n \geq C(\alpha, \gamma) TK(T)$. Then the previous inequality provides
\begin{equation*}
    \| w(T) \|_{L^2_\gamma} \lesssim \| w_0 \|_{L^2_\gamma}^{(\theta e^{-C})^n} K(T)^{1 - (\theta e^{-C})^n}.
\end{equation*}
By setting
\begin{equation*}
    - C_1 = C(\alpha, \gamma) \log(\theta e^{-C}) < 0,
\end{equation*}
which only depends on $\alpha$ and $\gamma$, we finally obtain the global in time estimate we were seeking:
\begin{equation*}
    \| w(T) \|_{L^2_\gamma} \lesssim \| w_0 \|_{L^2_\gamma}^{e^{-C_1TK(T)}} K(T)^{1 - e^{-C_1TK(T)}}.
\end{equation*}
This ends proving the uniqueness of solutions as well as inequality \eqref{eq:localHolder} for all values of $0 \leq \alpha < \frac{1}{2}$.

\medskip

\textbf{Continuity of the initial data to solution map.} Let $\epsilon, \delta > 0$ and consider two initial data $u_0 \in L^2_{\alpha, 0}$ and $v_0 \in L^2_\alpha$ such that $\| u_0 - v_0 \|_{L^2_\alpha} \leq \delta$ and $\| \curl(u_0, v_0) \|_{L^\infty} \leq B$. Denote by $u(t), v(t)$ the corresponding solutions evaluated at time $t > 0$. According to Theorem \ref{t:EnergyBound} (and more precisely equation \eqref{eq:IntIneq1}), there exists a $R_1 > 0$ such that for any $R \geq R_1$, we have
\begin{equation}\label{eq:FlowL2A0Estimate}
    \frac{1}{R^{1 + \alpha}} \left( \big\| u(t) \big\|_{L^2(B_R)} + \big\| v(t) \big\|_{L^2(B_R)} \right) \leq \frac{2}{R^{1 + \alpha}} \left( \| u_0 \|_{L^2(B_R)} + \| v_0 \|_{L^2(B_R)}\right).
\end{equation}
The radius $R_1$ only depends on the time $t > 0$, on $B$ and on $\| u_0, v_0 \|_{L^2_\alpha}$. Because $u_0 \in L^2_{\alpha, 0}$, we in particular have the convergence $R^{-(1 + \alpha)} \| u(t) \|_{L^2(B_R)} \longrightarrow 0$ as $R \rightarrow \infty$, and this shows that $u(t) \in L^2_{\alpha, 0}$. As a consequence, the initial data to solution map is well defined as a $Y_\alpha \cap L^2_{\alpha, 0} \longrightarrow Y_\alpha \cap L^2_{\alpha, 0}$ map.

\medskip

Because $u_0 \in L^2_{\alpha, 0}$, we may fix a radius $R_2 > 0$ only depending on $R_2$ such that the quantity $R^{-(1 + \alpha)} \| u_0 \|_{L^2(B_R)}$ is smaller than $\epsilon$. The inequality $\| u_0 - v_0 \|_{L^2_\alpha} \leq \delta$ and the definition of $L^2_\alpha$ imply that $R^{-(1 + \alpha)} \| v_0 \|_{L^2(B_R)} \leq \epsilon + \delta$ for all $R \geq R_2$. By equation \eqref{eq:FlowL2A0Estimate}, this implies in turn that for $R \geq R_0 := \max \{ R_1, R_2 \}$, we have
\begin{equation*}
    \frac{1}{R^{1 + \alpha}} \left( \big\| u(t) \big\|_{L^2(B_R)} + \big\| v(t) \big\|_{L^2(B_R)} \right) \leq 2(\epsilon + \delta),
\end{equation*}
and the radius $R_0$ only depends on $\| u_0 \|_{L^2_\alpha} + \| v_0 \|_{L^2_\alpha} \leq \| u_0 \|_{L^2_\alpha} + \delta$ and on $u_0$.

\medskip

Now fix a $\gamma > \alpha$. Points \textit{(i)} and \textit{(ii)} of Theorem \ref{t:UniquenessYudovich}, which we have already proven show that
\begin{equation*}
    \big\| u(t) - v(t) \big\|_{L^2_\gamma} \leq A \| u_0 - v_0 \|_{L^2_\gamma}^\theta,
\end{equation*}
where $A$ and $\theta$ are functions of $\big( t, B, \| u_0, v_0 \|_{L^2_\alpha} \big)$. Combining this with what we have already inferred above, we get that
\begin{equation*}
    \frac{1}{R^{1 + \alpha}} \big\| u(t) - v(t) \big\|_{L^2(B_R)} \lesssim
    \begin{cases}
        R_0^{\gamma - \alpha} A \| u_0 - v_0 \|_{L^2_\gamma}^\theta \quad \text{if } R \leq R_0 \\
        \epsilon + \delta \quad \text{if } R \geq R_0.
    \end{cases}
\end{equation*}
This shows that the norm $\big\| u(t) - v(t) \big\|_{L^2_\alpha}$ can be made smaller than $2 \epsilon$ by taking $\delta$ smaller than some $\eta > 0$ which only depends on $\epsilon$ and $R_0$, \textsl{i.e.} on $\epsilon$ and on $u_0$. This proves the sought continuity property (see Remark \ref{r:VanishPerturbation}).

\end{proof}

\section{Representation Formula for the Pressure}\label{s:PressureFormula}

In this final section, we explain the role of the far-field condition $u(t) - u(0) \in \mc S'_h$. We will see that a $Y_\alpha$ solution of the Euler equations is a Yudovich solution if and only if it satisfies this far-field condition (see Theorem \ref{t:RepresentationPressure}).

\subsection{The Issue: not every Regular Solution is a Yudovich Solution}

We study the question of whether a general $Y_\alpha$ weak solution of the Euler equations is a Yudovich solution. As we have explained in the introduction, infinite energy solutions create a difficulty here, as it is not obvious that a solution $(u, \pi)$ of the Euler equations
\begin{equation*}
    \begin{cases}
        \partial_t u + \D (u \otimes u) + \nabla \pi = 0\\
        \D(u) = 0
    \end{cases}
\end{equation*}
must have a pressure that satisfies $\nabla \pi = \nabla \Pi (u \otimes u)$, where $\nabla \Pi$ is the pressure operator from Definition \ref{d:pressureOperator}. In fact, the generalized Galilean invariance of the equations \eqref{ieq:GenGalileo} proves that it is not the case: for any Yudovich solution of the Euler equations $u \in L^\infty_{\rm loc} (Y_\alpha)$ and any smooth $g : \R \rightarrow \R^2$, the functions $(v, p)$ from equation \eqref{ieq:GenGalileo} define a solution of the Euler equations with $\nabla p \neq \nabla \Pi (v \otimes v)$. This phenomenon is very much linked to non-uniqueness of the solutions: while Yudovich solutions are unique by Theorem \ref{t:UniquenessYudovich}, it is possible to pick the function $g$ with its support away from $t=0$, and hence to construct infinitely many different solutions that share the same initial data.

\medskip

These considerations show that if any kind of uniqueness theorem is to be achieved, we must find a way to characterize solutions $(u, \pi)$ whose pressure force is given by Definition \ref{d:pressureOperator}, in other words solutions so that $\nabla \pi = \nabla \Pi (u \otimes u)$. 

\medskip

We point out that the results in this section hold in a much more general setting than solutions of the 2D Euler equations with bounded vorticity. By adopting a distributional generalization of the pressure operator $\nabla \Pi (u \otimes u)$ in any dimension $d \geq 2$ and for $L^2_\alpha$ solutions, we are able to find a necessary and sufficient condition for a solution $(u, \pi)$ of the Euler equations to satisfy $\nabla \pi = \Pi(u \otimes u)$.

\begin{defi}\label{d:GenPressureOperator}
    Let the dimension of the space be $d \geq 2$. Consider $\alpha < \frac{1}{2}$, cut-off functions $\chi, \theta \in \mc D$ as in \eqref{eq:cutOff}, and $u, v \in L^2_\alpha$. Let $\psi \in \mc S$ be the Schwartz function such that $\what{\psi}(\xi) = \chi(\xi)$. We define the distribution $\nabla \bar{\Pi}(u \otimes v) \in \mc S'$ by the following formula:
    \begin{equation*}
        \nabla \bar{\Pi}(u \otimes v) := \big( {\rm Id} - \chi(D) \big) \nabla (- \Delta)^{-1} \nabla^2 : (u \otimes v) + \Gamma * (u \otimes v),
    \end{equation*}
    where the kernel $\Gamma$ of the operator $\nabla \chi(D) (- \Delta)^{-1} \nabla^2 :$ is defined by
    \begin{equation*}
        \Gamma := \nabla^3 \psi * (\theta E) + \psi * \nabla^3 \big( (1 - \theta)E \big).
    \end{equation*}
    The operator $\nabla \bar{\Pi}$ is independent of the choice of cut-off functions $\chi, \theta \in \mc D$. The kernel $\Gamma$ depends on the choice of $\chi$, but not on that of $\theta$.
\end{defi}

Several remarks are necessary to to explain this definition.

\begin{rmk}
    Even though it is not obvious by the way the operator $\nabla \bar{\Pi}$, we will see that this definition makes sense for all functions $u, v \in L^2_\alpha$, without any additional regularity requirements, as long as $\alpha < \frac{1}{2}$.
\end{rmk}

\begin{rmk}
    It is quite clear that if $u, v \in Y_\alpha$, then the two pressure operators $\nabla \Pi (u \otimes v)$ and $\nabla \bar{\Pi} (u \otimes v)$ coincide: in that case $\Gamma * (u \otimes v) = \chi(D) \nabla \Pi(u \otimes u)$ while $\big( {\rm Id} - \chi(D) \big) \nabla (- \Delta)^{-1} \nabla^2 : (u \otimes v)$ is equal to $\big( {\rm Id} - \chi(D) \big) \nabla \Pi (u \otimes v)$. However, by Remark \ref{r:pRegularityPressure}, we see that the quantity $\nabla \Pi(u \otimes v)$ from Definition \ref{d:pressureOperator} only makes sense when $u, v \in L^p_{\rm loc}$ for some $p > 2$. This is why Definition \ref{d:GenPressureOperator} is a necessary generalization of Definition \ref{d:pressureOperator} if we want to deal with $L^2_\alpha$ solutions.
\end{rmk}

\subsection{A Far-Field Condition for Yudovich Solutions}

In this subsection, we state and prove the main result of this paragraph: necessary and sufficient conditions for a weak solution $u \in C^0(L^2_\alpha)$ of the Euler equations to have a pressure force given by $F_\pi = \nabla \bar{\Pi}(u \otimes u)$. In particular, this allows to determine whether a $Y_\alpha$ solution in fact is a Yudovich solution. This result, Theorem \ref{t:RepresentationPressure}, is in the spirit of the results of \cite{Cobb1}, although the proof is much more technical due to the $L^2_\alpha$ framework.

It should be noted that Theorem \ref{t:RepresentationPressure} holds in any dimension $d \geq 2$. It therefore has potential applications beyond the scope of Yudovich solutions for the 2D Euler equations.

\begin{thm}\label{t:RepresentationPressure}
    Let $T > 0$, $0 \leq \alpha < \frac{1}{2}$, and $d \geq 2$. Consider a weak solution $u \in C^0\big([0, T[ ; L^2_\alpha(\R^d)\big)$ of the Euler equations associated with the divergence-free initial datum $u_0 \in L^2_\alpha(\R^d)$ according to Definition \ref{d:WeakSolution}. Then the following conditions are equivalent:
    \begin{enumerate}[(i)]
        \item the pressure force $F_\pi$ is given by the integral formula $F_\pi = \nabla \bar{\Pi}(u \otimes u)$, which holds in $\mc D'([0, T[ \times \R^d)$;
        \item for all $t \geq 0$, we have $u(t) - u(0) \in \mc S'_h$ and $u(0) = u_0$;
        \item the pressure force is continuous with respect to time $F_\pi \in C^0([0, T[ ; \mc S')$ and satisfies $F_\pi (t) \in \mc S'_h$ for all $t \geq 0$.
    \end{enumerate}
\end{thm}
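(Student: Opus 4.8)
The plan is to reduce all three conditions to the single identity $G := F_\pi - \nabla\bar\Pi(u\otimes u) = 0$, where $F_\pi$ is the distributional pressure force of Remark \ref{r:IrregularPressure}. First I would record the structure of $G$. Taking the curl of the momentum equation and using that both $F_\pi$ and the gradient $\nabla\bar\Pi(u\otimes u)$ are curl-free gives $\curl G = 0$; taking the divergence and using $\D u = 0$ together with $\D\nabla\bar\Pi(u\otimes u) = -\partial_i\partial_j(u_iu_j) = \D F_\pi$ gives $\D G = 0$. The latter identity follows because recombining the two pieces of Definition \ref{d:GenPressureOperator}, namely $(\Id-\chi(D))$ and $\chi(D)$ applied to $\nabla(-\Delta)^{-1}\nabla^2\!:$, yields the single multiplier $\nabla(-\Delta)^{-1}\nabla^2\!:(u\otimes u)$, whose divergence is $\Delta(-\Delta)^{-1}\nabla^2\!:(u\otimes u) = -\partial_i\partial_j(u_iu_j)$. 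Thus $G(t)$ is a curl-free, divergence-free tempered distribution. By the standard fact that a curl-free tempered distribution is a gradient (cf. the lemma cited in Remark \ref{r:IrregularPressure}), and that a harmonic tempered distribution is a harmonic polynomial (its Fourier transform is supported at the origin, hence a finite combination of derivatives of $\delta_0$), I get $G(t) = \nabla P_t$ for a harmonic polynomial $P_t$. Since a nonzero polynomial is never an element of $\mc S'_h$, this furnishes the rigidity principle: \emph{if $G(t)\in\mc S'_h$ then $G(t)=0$}.

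The analytic heart is the complementary statement that the canonical pieces of the equation do lie in $\mc S'_h$. Concretely I would prove two quantitative lemmas: (A) $\D(u\otimes u)\in\mc S'_h$, and (B) $\nabla\bar\Pi(u\otimes u)\in\mc S'_h$, each with a rate of the form $|\langle\chi(\lambda D)\,(\cdot)\,,\phi\rangle|\lesssim_\phi \lambda^{2\alpha-1}\|u\|_{L^2_\alpha}^2$. For (A) I would rescale: writing $\langle\chi(\lambda D)\partial_i f,\phi\rangle = -\langle f,\chi(\lambda D)\partial_i\phi\rangle$ and substituting $\zeta=\lambda\xi$ on the Fourier side exhibits the test function as $\lambda^{-d-1}\Psi(\cdot/\lambda)$ for a fixed Schwartz $\Psi$, so the pairing is $O\!\big(\lambda^{-1}\int f(\lambda y)\Psi(y)\,dy\big)=O(\lambda^{2\alpha-1})$ precisely because $u\otimes u$ grows like $|x|^{2\alpha}$ with $2\alpha<1$. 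Both the well-posedness of $\nabla\bar\Pi(u\otimes u)$ as a tempered distribution for merely $u\in L^2_\alpha$ and statement (B) rest on the kernel splitting $\Gamma=\nabla^3\psi*(\theta E)+\psi*\nabla^3\big((1-\theta)E\big)$: the first summand is smooth and compactly supported while the second decays like $\langle\cdot\rangle^{-d-1}$, so the convolution $\Gamma*(u\otimes u)$ converges by a dyadic integral splitting exactly when $d+1-2\alpha>d$, i.e. $\alpha<\tfrac12$. For the low-frequency vanishing in (B), the term $(\Id-\chi(D))\nabla(-\Delta)^{-1}\nabla^2\!:(u\otimes u)$ has identically vanishing low frequencies (since $\chi(\lambda D)(\Id-\chi(D))=0$ once $\lambda\ge 2$), while $\Gamma*(u\otimes u)$ has symbol $\chi(\xi)\xi_l\xi_j\xi_k/|\xi|^2$ vanishing at $\xi=0$, which is again the $\mc S'_h$ condition. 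I expect these harmonic-analysis lemmas, together with the continuity of $\nabla\bar\Pi$ as a map $L^2_\alpha\times L^2_\alpha\to\mc S'$, to be the main obstacle, since the Morrey (rather than pointwise) growth control forces the dyadic splitting and the threshold $\alpha<\tfrac12$ enters only through convergence of these integrals.

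With these ingredients the equivalences follow by soft arguments. Setting $N:=\D(u\otimes u)+\nabla\bar\Pi(u\otimes u)$, Lemmas (A)–(B) give $N(t)\in\mc S'_h$, and dominated convergence in $\lambda$ (using the uniform-in-$s$ rate on $[0,t]$) gives $\int_0^t N(s)\,ds\in\mc S'_h$. To integrate the momentum equation in $\mc S'$ I would extend $u$ by zero to $t<0$ and mollify in time, as announced in the remarks following Definition \ref{d:WeakSolution}, obtaining $u(t)-u(0)=-\int_0^t N-\int_0^t G$ and, from the $C^0$ regularity together with the $\delta_0(t)\otimes u_0(x)$ source, the identification $u(0)=u_0$. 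Then: the implication (iii)$\Rightarrow$(i) is pointwise in time, since $F_\pi(t)\in\mc S'_h$ makes $G(t)=F_\pi(t)-\nabla\bar\Pi(u\otimes u)(t)$ a difference of two $\mc S'_h$ elements, hence in $\mc S'_h$, hence zero by rigidity; (i)$\Rightarrow$(iii) uses continuity of $\nabla\bar\Pi$ for the time-continuity of $F_\pi$ and Lemma (B) for the $\mc S'_h$ membership; and (i)$\Rightarrow$(ii) is immediate from $u(t)-u(0)=-\int_0^t N\in\mc S'_h$ with $u(0)=u_0$.

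Finally I would close the loop with (ii)$\Rightarrow$(i). From (ii) and the integrated equation, $\int_0^t G = u(0)-u(t)-\int_0^t N\in\mc S'_h$, and this quantity is tempered, curl-free and divergence-free, hence a harmonic-polynomial gradient; rigidity forces $\int_0^t G=0$ for every $t$, and differentiating in time (legitimate since $G\in L^1_{\mathrm{loc}}([0,T[;\mc S')$) yields $G\equiv 0$, that is $F_\pi=\nabla\bar\Pi(u\otimes u)$. This establishes (i)$\Leftrightarrow$(ii)$\Leftrightarrow$(iii) and proves the theorem.
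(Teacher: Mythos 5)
Your strategy coincides with the paper's in all essentials: the kernel splitting $\Gamma=\nabla^3\psi*(\theta E)+\psi*\nabla^3\big((1-\theta)E\big)$ with the near part compactly supported and the far part decaying like $\langle\cdot\rangle^{-(d+1)}$, the dyadic decomposition giving $\nabla\bar{\Pi}(u\otimes u),\ \D(u\otimes u)\in\mc S'_h$ with rate $O(\lambda^{2\alpha-1})$ (Propositions \ref{p:KernelEstimate} and \ref{p:SphDecay}, and this is exactly where $\alpha<\tfrac12$ enters), the identification of $F_\pi-\nabla\bar{\Pi}(u\otimes u)$ at each time as the gradient of a harmonic polynomial, and the rigidity that a nonzero polynomial never lies in $\mc S'_h$. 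The only structural difference is bookkeeping: you work with the time-integrated identity $u(t)-u(0)=-\int_0^t N-\int_0^t G$, while the paper tests the equation against tensor products $\phi(t)\varphi(x)$ and arrives at the equivalent identity \eqref{eq:LastSummaryLastTheorem}.

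There is, however, one genuine error: you assert that the identification $u(0)=u_0$ follows from the $C^0\big([0,T[;L^2_\alpha\big)$ regularity together with the source term $\delta_0(t)\otimes u_0(x)$ alone. This is false, and it is precisely why the paper makes $u(0)=u_0$ an explicit part of condition \textit{(ii)}. Indeed, by Remark \ref{r:timeIrregularity}, a weak solution with value $u(0)$ at time zero is simultaneously a weak solution for every initial datum $u(0)+V$, $V\in\R^d$ constant, because constants are divergence-free gradients and hence invisible to divergence-free test functions; so a time-continuous weak solution can perfectly well have $u(0)\neq u_0$. What the equation actually yields is that the atom of $F_\pi$ at $t=0$ equals $\delta_0(t)\otimes\big(u_0-u(0)\big)$, where $u_0-u(0)$ is a divergence-free distribution orthogonal to all divergence-free test functions, hence a polynomial, hence (being in $L^2_\alpha$ with $\alpha<1$) a constant vector --- possibly nonzero. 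This constant is killed only by an additional hypothesis: under \textit{(i)} the pressure force equals $\nabla\bar{\Pi}(u\otimes u)\in C^0([0,T[;\mc S')$ and therefore carries no atom in time, which forces $u_0=u(0)$; under \textit{(ii)} it is assumed; and a nonzero constant is not in $\mc S'_h$, which is how \textit{(iii)} excludes it. Your implication \textit{(i)}$\Rightarrow$\textit{(ii)} therefore survives, but the justification you give for $u(0)=u_0$ must be replaced by this atom-matching argument (this is the role of the term $\phi(0)\langle u_0-u(0),\varphi\rangle$ in the paper's identity \eqref{eq:LastSummaryLastTheorem}); as written, the claim is a gap. The remaining implications and the harmonic-analysis lemmas are sound.
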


\begin{rmk}
    The regularity requirements of Theorem \ref{t:RepresentationPressure} deserve some attention. On the one hand, a very natural assumption would be that $u$ have $W^{1, \infty}$ regularity with respect to the time variable in some low regularity space: in fact, Yudovich solutions have this time regularity due to the equation they solve:
    \begin{equation*}
        \partial_t u = - \D (u \otimes u) - \nabla \Pi (u \otimes u).
    \end{equation*}
    However, as we have seen in Remark \ref{r:timeIrregularity}, weak solutions in general do not possess automatic regularity in the time variable, even in $\mc D'$ topology. On the other hand, it seems pointless to go below $C^0$ time regularity, in the light of the existence of paradoxical solutions (that dissipate kinetic energy) \cite{DS}. Overall, time continuity seems to be a reasonable regularity requirement for our result.
\end{rmk}

\begin{proof}
    The key point of the proof is to show that the integral representation of the pressure $\nabla \bar{\Pi} (u \otimes u)$ is an element of $\mc S'_h$ for almost every time (see Definition \ref{d:Chemin}). In other words, we wish to make sure that
    \begin{equation}\label{eq:PressureConvergence}
        \chi(\lambda D) \nabla \bar{\Pi}(u \otimes u) \tend_{\lambda \rightarrow \infty} 0 \qquad \text{in } \mc S'.
    \end{equation}
    for every $u \in L^2_\alpha$. We will start by seeking a kernel estimate for the operator $\chi(\lambda D) \nabla \bar{\Pi}$ before applying them to the convergence \eqref{eq:PressureConvergence}. Finally, in a third and last step of the proof, we will make use of this in the Euler system.

    \medskip

    \textbf{STEP 1: kernel estimates}. We start by finding bounds for the low frequency part of the kernel of the $\nabla \Pi$ operator. This is contained in the proposition below.

    \begin{prop}\label{p:KernelEstimate}
        Consider $\chi \in \mc D$ a cut-off function as in \eqref{eq:cutOff} and let $\psi_\lambda \in \mc S$ be such that $\what{\psi_\lambda}(\xi) = \chi(\lambda \xi)$. Then the kernel of the operator $\chi(\lambda D) \nabla \bar{\Pi}$, namely
        \begin{equation}\label{eq:KernelLambda}
            \Gamma_\lambda := \nabla^2 \psi_\lambda * \nabla (\theta E) + \psi_\lambda * \nabla^3 \big( (1 - \theta) E \big)
        \end{equation}
        satisfies the inequality
        \begin{equation*}
            |\Gamma_\lambda (x)| \lesssim \frac{1}{\lambda^{d+1}} \left\langle \frac{x}{\lambda} \right\rangle^{-(d+1)}
        \end{equation*}
        for all $\lambda \geq 1$.
    \end{prop}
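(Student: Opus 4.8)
The plan is to exploit the exact scaling structure of the operator so as to reduce the whole estimate to the single scale $\lambda = 1$, and only then to carry out the kernel splitting. First I would record that $\Gamma_\lambda$, being by construction the kernel of $\chi(\lambda D) \nabla \bar{\Pi}$, has Fourier transform $\widehat{\Gamma_\lambda}(\xi) = \chi(\lambda \xi)\, \sigma(\xi)$, where $\sigma$ is the (matrix-valued) symbol of the order-one operator $\nabla \bar{\Pi} = \nabla (-\Delta)^{-1} \nabla^2 :$, namely $\sigma_{l, jk}(\xi) = - i\, \xi_l \xi_j \xi_k / |\xi|^2$. This $\sigma$ is smooth away from the origin and homogeneous of degree $1$, so that $\sigma(\xi) = \lambda^{-1} \sigma(\lambda \xi)$, whence $\widehat{\Gamma_\lambda}(\xi) = \lambda^{-1}\, \widehat{\Gamma_1}(\lambda \xi)$. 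Taking the inverse Fourier transform yields the exact scaling identity
\begin{equation*}
    \Gamma_\lambda(x) = \lambda^{-(d+1)}\, \Gamma_1\!\left( \frac{x}{\lambda} \right),
\end{equation*}
valid for every $\lambda > 0$, in particular for $\lambda \geq 1$. Consequently the claimed bound reduces to the single inequality $|\Gamma_1(x)| \lesssim \langle x \rangle^{-(d+1)}$.

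Second, I would prove this $\lambda = 1$ bound directly from the splitting $\Gamma_1 = \nabla^2 \psi * \nabla(\theta E) + \psi * \nabla^3\big( (1 - \theta) E \big)$, where $\psi = \mc F^{-1}\chi$ is Schwartz. Two elementary integrability facts underlie everything: the kernel $\nabla(\theta E)$ is supported in $B_2$ and lies in $L^1$ (its only singularity $\nabla E \sim |x|^{1 - d}$ is locally integrable since $d \geq 2$), while $g := \nabla^3\big( (1 - \theta) E \big)$ is smooth, supported in $\{|x| \geq 1\}$, and decays like $\langle x \rangle^{-(d+1)}$ at infinity, hence is also $L^1$. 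For $|x|$ bounded both convolutions are then controlled by $\| \nabla^2 \psi \|_{L^\infty} \| \nabla(\theta E) \|_{L^1}$ and $\| \psi \|_{L^\infty} \| g \|_{L^1}$, giving boundedness, which matches $\langle x \rangle^{-(d+1)} \approx 1$.

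Third, for large $|x|$ I would treat the two summands separately. For the near-field term, since $\nabla(\theta E)$ is supported in $B_2$ one has $|x - y| \gtrsim |x|$ on the support, and the Schwartz decay of $\nabla^2 \psi$ yields $\langle x \rangle^{-N}$ for any $N$ after integrating against $\nabla(\theta E) \in L^1$. For the far-field term $\psi * g$, I would split the $y$-integral at $|y| = |x|/2$: on $\{ |y| \leq |x|/2 \}$ one has $|x - y| \gtrsim |x|$, so the Schwartz decay of $\psi$ against $\| g \|_{L^1}$ gives $\langle x \rangle^{-N}$; on $\{ |y| > |x|/2 \}$ the pointwise bound $|g(y)| \lesssim \langle y \rangle^{-(d+1)} \lesssim \langle x \rangle^{-(d+1)}$ against $\| \psi \|_{L^1}$ gives the required decay. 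Together these establish $|\Gamma_1(x)| \lesssim \langle x \rangle^{-(d+1)}$.

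The main subtlety, and the reason the scaling reduction is essential, is that the individual summands in the $\lambda$-dependent formula for $\Gamma_\lambda$ do \emph{not} scale cleanly: the cutoff $\theta$ is fixed at scale $1$ while $\psi_\lambda$ lives at scale $\lambda$. A direct estimate of $\psi_\lambda * \nabla^3\big( (1 - \theta) E \big)$ for large $\lambda$ loses a logarithm (or a full power of $\lambda$), because the genuine gain of one power of $\lambda$ originates in the low-frequency cancellation encoded in $\sigma(\xi) \sim |\xi|$ near $\xi = 0$, a cancellation shared between the two summands and visible only in the full kernel. Passing to $\lambda = 1$ removes the large parameter and makes this cancellation automatic, so that the conical, homogeneous-degree-one singularity of $\sigma$ at the origin — which is precisely what dictates the $|x|^{-(d+1)}$ tail — is handled simply by isolating the singular near-field piece $\theta E$ from the smooth far-field piece $(1 - \theta) E$.
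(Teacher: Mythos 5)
Your proof is correct, and it reaches the paper's estimate by a genuinely different organization of the argument. The paper's own proof also hinges on the $\theta$-independence of $\Gamma_\lambda$, but it exploits it in physical space: it substitutes the rescaled cutoff $\theta_\lambda(x)=\theta(x/\lambda)$ into \eqref{eq:KernelLambda} and then estimates the two convolution integrals directly at scale $\lambda$, splitting each $y$-integral into near and far regions ($|y|\lessgtr \tfrac13|x|$) exactly as you do at scale $1$. You instead pass to the Fourier side, identify $\what{\Gamma_\lambda}(\xi)=\chi(\lambda\xi)\sigma(\xi)$ with $\sigma$ homogeneous of degree one, deduce the exact identity $\Gamma_\lambda(x)=\lambda^{-(d+1)}\Gamma_1(x/\lambda)$, and only then carry out the (now parameter-free) kernel splitting; your real-space estimates of $\Gamma_1$ are sound. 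What your route buys is transparency — the $\lambda^{-(d+1)}$ gain is visibly the degree-one homogeneity of the symbol — and a shorter computation; its only cost is that the identification of $\what{\Gamma_\lambda}$ deserves a word of justification, slightly delicate in $d=2$ where $\what{E}$ is a finite-part distribution plus a multiple of $\delta_0$, though harmless here since multiplication by $(i\xi)^{\otimes 3}$, which vanishes to third order at the origin, removes the ambiguity. Your closing remark is also accurate and worth keeping: with $\theta$ fixed, the far-field summand alone is only $O(\lambda^{-d})$ (the missing power comes from the moment cancellation $\int \nabla^3\big((1-\theta)E\big)=0$), which is precisely why the paper rescales $\theta$ and why your reduction to $\lambda=1$ is not a cosmetic step.
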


    \begin{proof}
        First of all, we remark that the relation \eqref{eq:KernelLambda} holds for all cut-off functions $\theta$, so it is also possible to adjust the size of the cut-off $\theta$ according to the value of $\lambda$ by scaling. By setting $\theta_\lambda(x) = \theta(x / \lambda)$, we see that the kernel $\Gamma_\lambda$ is given by
        \begin{equation*}
            \Gamma_\lambda = \nabla^2 \psi_\lambda * \nabla (\theta_\lambda E) + \psi_\lambda * \nabla^3 \big( (1 - \theta_\lambda) E \big).
        \end{equation*}

        \medskip
        
        Let us first focus on the convolution product $\nabla^2 \psi_\lambda * \nabla (\theta_\lambda E)$. We start by remarking that, because $\psi \in \mc S$ is a Schwartz function, it must decay faster than any polynomial, along with all of its derivatives. Therefore, for any $\mu > 0$, we write the inequality
        \begin{equation*}
            |\psi(y)| + |\nabla^2 \psi(y)| \leq C(\mu) \langle y \rangle^{- \mu}.
        \end{equation*}
        The exponent $\mu$ will be taken as large as necessary, and we will fix its precise value later. By plugging this estimate in the convolution product, we obtain
        \begin{equation*}
            \big| \nabla^2 \psi_\lambda * \nabla (\theta_\lambda E)(x) \big| \lesssim \frac{1}{\lambda^{d+2}} \int \left\langle \frac{y}{\lambda} \right\rangle^{- \mu} \frac{\mathds{1}_{|x - y| \leq 2 \lambda}}{|x - y|^{d-1}} \dy.
        \end{equation*}
        Notice that this estimate is true independently of whether $d=2$ or not. As is now standard procedure, we cut the integral into two parts, by using the decomposition of the space $\R^d = \{ |y| \leq \frac{1}{3}|x| \} \sqcup \{ |y| > \frac{1}{3}|x| \}$. For the integral on the first part $|y| \leq \frac{1}{3}|x|$, we have $\frac{2}{3}|x| \leq |x - y| \leq \frac{4}{3}|x|$, and so we have a first series of inequalities:
        \begin{equation*}
            \frac{1}{\lambda^{d+2}} \int_{|y| \leq \frac{1}{3}|x|} \left\langle \frac{y}{\lambda} \right\rangle^{- \mu} \frac{\mathds{1}_{|x - y| \leq \lambda}}{|x - y|^{d-1}} \dy \lesssim \frac{\mathds{1}_{|x| \leq 3\lambda}}{\lambda^{d+2} |x|^{d-1}} \int_{|y| \leq \frac{1}{3}|x|} \left\langle \frac{y}{\lambda} \right\rangle^{- \mu} \dy.
        \end{equation*}
        By using the fact that the function $\langle y / \lambda \rangle^{- \mu}$ is bounded by $1$, we may estimate the integral by the volume of the integration domain, namely $C(d) |x|^d$. Consequently,
        \begin{equation}\label{eq:KernelEstimateEQ1}
            \frac{1}{\lambda^{d+2}} \int_{|y| \leq \frac{1}{3}|x|} \left\langle \frac{y}{\lambda} \right\rangle^{- \mu} \frac{\mathds{1}_{|x - y| \leq \lambda}}{|x - y|^{d-1}} \dy \lesssim \frac{|x|}{\lambda^{d+2}} \mathds{1}_{|x| \leq 3\lambda} \lesssim \frac{1}{\lambda^{d+1}} \mathds{1}_{|x| \leq 3\lambda}.
        \end{equation}
        For the other part of the integral $|y| > \frac{1}{3}|x|$, we see that the quantity $\langle y / \lambda \rangle$ is bounded from below by $C\langle x / \lambda \rangle$, which leads to 
        \begin{equation}\label{eq:KernelEstimateEQ2}
            \begin{split}
                \frac{1}{\lambda^{d+2}} \int_{|y| > \frac{1}{3}|x|} \left\langle \frac{y}{\lambda} \right\rangle^{- \mu} \frac{\mathds{1}_{|x - y| \leq 2\lambda}}{|x - y|^{d-1}} \dy & \lesssim \left\langle \frac{x}{\lambda} \right\rangle^{- \mu} \frac{1}{\lambda^{d+2}} \int \frac{\mathds{1}_{|x-y| \leq 2 \lambda}}{|x - y|^{d-1}} \dy \\
                & \lesssim \frac{1}{\lambda^{d+1}} \left\langle \frac{x}{\lambda} \right\rangle^{- \mu}.
            \end{split}
        \end{equation}

        \medskip

        We now center our attention on the second convolution product $\psi_\lambda * \nabla^3 \big( (1 - \theta_\lambda)E \big)$. By using again the Schwartz estimate for $\psi$, we obtain the inequality
        \begin{equation*}
            \big| \psi_\lambda * \nabla^3 \big( (1 - \theta_\lambda)E \big)(x) \big| \lesssim \frac{1}{\lambda^d} \int \left\langle \frac{y}{\lambda} \right\rangle^{- \mu} \frac{\mathds{1}_{|x - y| \geq \lambda}}{|x - y|^{d+2}} \dy.
        \end{equation*}
        This estimate holds independently of whether $d=2$ or not. Just as above, we split this integral in two parts $\R^d = \{ |y| \leq \frac{1}{3}|x| \} \sqcup \{ |y| > \frac{1}{3}|x| \}$, the first one giving the bound
        \begin{equation*}
            \frac{1}{\lambda^d} \int_{|y| \leq \frac{1}{3}|x|} \left\langle \frac{y}{\lambda} \right\rangle^{- \mu} \frac{\mathds{1}_{|x - y| \geq \lambda}}{|x - y|^{d+2}} \dy \lesssim \frac{\mathds{1}_{|x| \geq 3\lambda/4}}{|x|^{d+1}} \int_{|y| \leq \frac{1}{3}|x|} \left\langle \frac{y}{\lambda} \right\rangle^{- \mu} \frac{\dy}{\lambda^d}.
        \end{equation*}
        In order to obtain an upper bound that has sufficient integrability, we should avoid bounding the integral by a power of $|x|$ as we did above. Instead, we use the change of variables $\lambda y \leftarrow y$ and obtain the upper bound
        \begin{equation}\label{eq:KernelEstimateEQ3}
            \frac{1}{\lambda^d} \int_{|y| \leq \frac{1}{3}|x|} \left\langle \frac{y}{\lambda} \right\rangle^{- \mu} \frac{\mathds{1}_{|x - y| \geq \lambda}}{|x - y|^{d+2}} \dy \lesssim \frac{\mathds{1}_{|x| \geq 3 \lambda /4}}{|x|^{d+1}}.
        \end{equation}
        Of course, just like the upper bounds obtained above, it should be noted that this one decays as $O(\lambda^{-d-1})$ when $\lambda \rightarrow \infty$. Concerning the other half of the integral, on the integration domain $|y| > \frac{1}{3}|x|$, we also use the fact that $\langle y / \lambda \rangle$ is bounded from below by $C \langle x / \lambda \rangle$, which leads to the inequality
        \begin{equation}\label{eq:KernelEstimateEQ4}
            \begin{split}
                \frac{1}{\lambda^d} \int_{|y| > \frac{1}{3}|x|} \left\langle \frac{y}{\lambda} \right\rangle^{- \mu} \frac{\mathds{1}_{|x - y| \geq \lambda}}{|x - y|^{d+2}} \dy & \lesssim \left\langle \frac{x}{\lambda} \right\rangle^{- \mu} \frac{1}{\lambda^d} \int \frac{\mathds{1}_{|x-y| \geq \lambda}}{|x - y|^{d+1}} \dy \\
                & \lesssim \frac{1}{\lambda^{d+1}} \left\langle \frac{x}{\lambda} \right\rangle^{- \mu}.
            \end{split}
        \end{equation}

        \medskip

        Finally, we assemble all four estimates \eqref{eq:KernelEstimateEQ1}, \eqref{eq:KernelEstimateEQ2}, \eqref{eq:KernelEstimateEQ3} and \eqref{eq:KernelEstimateEQ4}. They provide the bound
        \begin{equation*}
            |\Gamma_\lambda(x)| \lesssim \frac{\mathds{1}_{|x| \leq 3\lambda}}{\lambda^{d+1}} + \frac{\mathds{1}_{|x| \geq 3 \lambda /4}}{|x|^{d+1}} + \frac{1}{\lambda^{d+1}} \left\langle \frac{x}{\lambda} \right\rangle^{- \mu}.
        \end{equation*}
        We see that the first two terms in the upper bound have similar structure, and have supports that only intersect when $|x| \approx \lambda$. This allows us to write the inequality
        \begin{equation*}
            \begin{split}
                \frac{\mathds{1}_{|x| \leq 3\lambda}}{\lambda^{d+1}} + \frac{\mathds{1}_{|x| \geq 3 \lambda /4}}{|x|^{d+1}} & \lesssim \frac{1}{\big( |x| + \lambda \big)^{d+1}} = \frac{1}{\lambda^{d+1}} \left( 1 + \frac{|x|}{\lambda} \right)^{-(d+1)} \\
                & \lesssim \frac{1}{\lambda^{d+1}} \left\langle \frac{x}{\lambda} \right\rangle^{-(d+1)}.
            \end{split}
        \end{equation*}
        By choosing $\mu$ such that $\mu > d+1$, we end the proof of the Proposition.
    \end{proof}
    
    \textbf{STEP 2: convolution product estimate.} In this second part of the proof, we use the kernel estimates we have just derived to prove that \eqref{eq:PressureConvergence} holds at any given time. More precisely, we show the estimates which are enclosed in the following proposition.

    \begin{prop}\label{p:SphDecay}
        Consider a function $f \in L^2_\alpha (\R^d ; \R^d)$. Then the convolution product $\Gamma_\lambda * (f \otimes f)$ is equal to a sum $I_1 + I_2$ where
        \begin{equation}\label{eq:I1Decay}
            \| I_1 \|_{L^\infty} \lesssim \frac{1}{\lambda^{1 - 2 \alpha}} \| f \|_{L^2_\alpha}^2
        \end{equation}
        and
        \begin{equation}\label{eq:I2Decay}
            \| I_2 \|_{L^\infty_{2\alpha}} \lesssim \frac{1}{\lambda^{d+1}} \| f \|_{L^2_\alpha}^2.
        \end{equation}
        In particular, we have $\nabla \bar{\Pi}(f \otimes f) \in \mc S'_h$.
    \end{prop}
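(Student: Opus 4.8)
The plan is to reduce the membership $\nabla\bar{\Pi}(f\otimes f)\in\mc S'_h$ to the two quantitative bounds \eqref{eq:I1Decay}--\eqref{eq:I2Decay}, and to obtain those by feeding the pointwise kernel estimate of Proposition~\ref{p:KernelEstimate} into the convolution and splitting the source $f\otimes f$ with a dyadic decomposition of space centred at the origin, exactly as in the existence proof. Concretely, since $\Gamma_\lambda$ is the kernel of $\chi(\lambda D)\nabla\bar{\Pi}$, one has $\chi(\lambda D)\nabla\bar{\Pi}(f\otimes f)=\Gamma_\lambda*(f\otimes f)$, and Proposition~\ref{p:KernelEstimate} supplies $|\Gamma_\lambda(z)|\lesssim\lambda^{-(d+1)}\langle z/\lambda\rangle^{-(d+1)}$. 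Fixing $x$, I would cut the integral along $\R^d=\{|y|\le 2|x|\}\sqcup\{|y|>2|x|\}$, defining $I_2$ as the near contribution and $I_1$ as the far one, so that $\Gamma_\lambda*(f\otimes f)=I_1+I_2$ by construction.

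For the far part $I_1$ (where $|y|>2|x|$) the triangle inequality gives $\langle(x-y)/\lambda\rangle\gtrsim\langle y/\lambda\rangle$, so the integrand no longer depends on $x$ and
\[
|I_1(x)|\lesssim\frac{1}{\lambda^{d+1}}\int_{\R^d}\Big\langle\frac{y}{\lambda}\Big\rangle^{-(d+1)}|f(y)|^2\dy .
\]
Decomposing this last integral over the dyadic annuli $A_k$ of \eqref{eq:dydadicSplittingR3Existence} at scale $\lambda$ and using $\int_{B_r}|f|^2\le r^{d+2\alpha}\|f\|_{L^2_\alpha}^2$ as in \eqref{eq:SumBoundsExistence}, each shell contributes $\lambda^{2\alpha-1}2^{-k(1-2\alpha)}\|f\|_{L^2_\alpha}^2$; the geometric series converges precisely because $\alpha<\tfrac12$, yielding the $x$-independent bound $\|I_1\|_{L^\infty}\lesssim\lambda^{-(1-2\alpha)}\|f\|_{L^2_\alpha}^2$, which is \eqref{eq:I1Decay}. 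For the near part $I_2$ (where $|y|\le2|x|$) I would again pass to the absolute-value kernel bound and estimate $\int_{|y|\le2|x|}\langle(x-y)/\lambda\rangle^{-(d+1)}|f(y)|^2\dy$ by a dyadic decomposition in $|x-y|$, retaining the strong prefactor $\lambda^{-(d+1)}$ and converting the $L^2_\alpha$ control of $f$ on balls into the weighted bound \eqref{eq:I2Decay} in the $L^\infty_{2\alpha}$ norm.

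Finally, the two bounds give the desired low-frequency vanishing: for $\phi\in\mc S$,
\[
\big|\langle\Gamma_\lambda*(f\otimes f),\phi\rangle\big|\le\|I_1\|_{L^\infty}\|\phi\|_{L^1}+\|I_2\|_{L^\infty_{2\alpha}}\int_{\R^d}\langle x\rangle^{2\alpha}|\phi(x)|\dx\lesssim\Big(\lambda^{-(1-2\alpha)}+\lambda^{-(d+1)}\Big)\|f\|_{L^2_\alpha}^2,
\]
which tends to $0$ as $\lambda\to\infty$ since both integrals converge ($\phi$ being Schwartz); by Definition~\ref{d:Chemin} this is exactly $\nabla\bar{\Pi}(f\otimes f)\in\mc S'_h$. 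I expect the main obstacle to be the near-field estimate \eqref{eq:I2Decay}: the crude kernel bound by itself controls $\int_{|y|\le2|x|}\langle(x-y)/\lambda\rangle^{-(d+1)}|f|^2$ only through $\int_{B_{C|x|}}|f|^2\lesssim|x|^{d+2\alpha}\|f\|_{L^2_\alpha}^2$, so recovering the much slower growth $|x|^{2\alpha}$ demanded by the $L^\infty_{2\alpha}$ norm is delicate and is where the fine geometry of the kernel (its concentration at scale $\lambda$, together with the cancellation coming from the symbol being odd and vanishing at the origin) must be exploited, rather than absolute values alone.
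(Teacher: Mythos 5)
Your decomposition and your treatment of the far-field piece $I_1$ coincide with the paper's proof: the same splitting of $\R^d$ into $\{|y|\gtrsim |x|\}$ and its complement (the paper cuts at $|y|=3|x|$, you at $|y|=2|x|$), the same reduction $\langle (x-y)/\lambda\rangle \gtrsim \langle y/\lambda\rangle$ on the far region, and the same dyadic summation whose convergence uses $\alpha<\tfrac12$; this part of your argument is complete and correct, as is the final pairing against Schwartz functions. The only place you stop short is the near-field piece $I_2$, where you conjecture that some cancellation of the kernel (oddness, vanishing of the symbol at the origin) must be exploited to reach the weight $R^{2\alpha}$. It is not: the paper performs exactly the crude absolute-value estimate you describe, namely for $|x|\le R$ it bounds $I_2(x)\lesssim \lambda^{-(d+1)}\int_{B_{3R}}|f|^2\lesssim \lambda^{-(d+1)}R^{d+2\alpha}\|f\|_{L^2_\alpha}^2$, and stops there. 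Your instinct that absolute values alone can only yield growth $R^{d+2\alpha}$ rather than $R^{2\alpha}$ is sound --- indeed $L^2_\alpha$ controls only balls centred at the origin, so $f$ may concentrate near $x$ and no better bound is available --- but this is harmless: what the computation actually establishes is $\|I_2\|_{L^\infty_{d+2\alpha}}\lesssim \lambda^{-(d+1)}\|f\|_{L^2_\alpha}^2$ (the subscript $2\alpha$ in \eqref{eq:I2Decay} should be read with the extra $d$), and any fixed polynomial weight, combined with the decay $\lambda^{-(d+1)}$, suffices both for the conclusion $\nabla\bar{\Pi}(f\otimes f)\in\mc S'_h$ and for every later use of the proposition, where $I_2$ is only ever integrated against Schwartz test functions. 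So the correct completion of your proof is simply to carry out the absolute-value bound you already wrote down and accept the larger polynomial weight; no fine structure of the kernel is used anywhere in the paper's argument.
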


\begin{rmk}
    It should be noted that the estimates above show that the quantity $\chi(\lambda D)\nabla \bar{\Pi}(f \otimes f)$ converges to zero as $\lambda \rightarrow \infty$ in the strong topology of $L^\infty_{2 \alpha}$ (and not only in the sense of distributions). However, it is not true that the convergence holds in $L^\infty$. This highlights the importance of choosing the correct topology when defining the space $\mc S'_h$ (see Remark \ref{r:Chemin}). If we had defined $\mc S'_h$ like it is in the classical textbook \cite{BCD}, as the space of all $f\in \mc S'$ such that
    \begin{equation*}
        \chi(\lambda D)f \tend_{\lambda \rightarrow \infty} 0 \qquad \text{in } L^\infty,
    \end{equation*}
    then Theorem \ref{t:RepresentationPressure} would not hold. The definition of \cite{BCD} is adapted for PDEs in spaces that embed in $L^\infty$, but not for spaces of unbounded functions.
\end{rmk}

    \begin{proof}
        By invoking the kernel estimate of Proposition \ref{p:KernelEstimate}, we bound the convolution product by
        \begin{equation*}
            |\Gamma_\lambda * (f \otimes f) (x)| \lesssim \frac{1}{\lambda} \int |f(y)|^2 \left\langle \frac{x-y}{\lambda} \right\rangle^{-(d+1)} \frac{\dy}{\lambda^{d}},
        \end{equation*}
        and we again resort to the trick of decomposing the integral in two parts. This time, we write the space as a partition $\R^d = \{ |x| \leq \frac{1}{3}|y| \} \sqcup \{ |x| > \frac{1}{3}|y| \}$. In the first of these integrals $|y| \leq \frac{1}{3}|x|$, which we call $J_1(x)$ (and corresponds to $I_1$), we have $\frac{2}{3}|x| \leq |x - y| \leq \frac{4}{3}|x|$, and so
        \begin{equation*}
            \begin{split}
                J_1(x) & := \frac{1}{\lambda} \int_{|x| \leq \frac{1}{3}|y|} |f(y)|^2 \left\langle \frac{x-y}{\lambda} \right\rangle^{-(d+1)} \frac{\dy}{\lambda^{d}} \\
                & \lesssim \frac{1}{\lambda} \int |f(y)|^2 \frac{\dy}{\lambda^d \langle y / \lambda \rangle^{d+1}} \\
                & \lesssim \frac{1}{\lambda} \int \big| f(\lambda y) \big|^2 \frac{\dy}{\langle y \rangle^{d+1}}.
            \end{split}
        \end{equation*}
        In order to evaluate this integral using the $L^2_\alpha$ norms of $f$, we resort to a dyadic decomposition of the space that is very close to the one presented in \eqref{eq:dydadicSplittingR3Existence} above. We set
        \begin{equation*}
            \R^d := \bigsqcup_{k=0}^\infty A_k := \big\{ |y| < 2 \big\} \sqcup \bigsqcup_{k=1}^\infty \big\{ 2^k \leq |y| < 2^{k+1} \big\}.
        \end{equation*}
        Then, by decomposing the integral above in a sum of integrals over the $A_k$, changing variables, and using the definition of the $L^2_\alpha$ norm, we have:
        \begin{equation*}
            \begin{split}
                J_1(x) & \lesssim \frac{1}{\lambda} \sum_{k=0}^\infty \int_{A_k} \big| f(\lambda y) \big|^2 \frac{\dy}{\langle y \rangle^{d+1}} \\
                & \lesssim \frac{1}{\lambda} \sum_{k=0}^\infty 2^{-k(d+1)} \int_{\lambda A_k} | f(y) |^2 \frac{\dy}{\lambda^d} \\
                & \lesssim \frac{1}{\lambda} \| f \|_{L^2_\alpha}^2 \sum_{k=0}^\infty 2^{-k(d+1)} \big( \lambda 2^k \big)^{d + 2 \alpha} \frac{1}{\lambda}
            \end{split}
        \end{equation*}
        and therefore, we establish the inequality
        \begin{equation*}%\label{eq:ConvolutionEstimateEQ1}
            \| J_1 \|_{L^\infty} \lesssim \frac{1}{\lambda^{1 - 2 \alpha}} \| f \|_{L^2_\alpha}^2,
        \end{equation*}
        from which \eqref{eq:I1Decay} follows.

        \medskip

        On the other hand, in the integral on $|x| > \frac{1}{3}|y|$, the integration variable $y$ is bounded by $3|x|$. Let us fix an $R \geq 1$ and assume that $|x| \leq R$. By writing 
        \begin{equation*}
            \begin{split}
                J_2(x) & = \frac{1}{\lambda^{d+1}} \int |f(y)|^2 \mathds{1}_{|y| \leq 3|x|} \left\langle \frac{x-y}{\lambda} \right\rangle^{-(d+1)} \dy \\
                & \lesssim \frac{1}{\lambda^{d+1}} \int |f(y)|^2 \mathds{1}_{|y| \leq 3R} \left\langle \frac{x-y}{\lambda} \right\rangle^{-(d+1)} \dy \\
                & \lesssim \frac{1}{\lambda^{d+1}} \int_{B_{3R}} \big| f(y) \big|^2 \dy \\
                & \lesssim \frac{R^{d + 2 \alpha}}{\lambda^{d+1}} \| f \|_{L^2_\alpha}^2.
            \end{split}
        \end{equation*}
        By definition of the $L^\infty_{2 \alpha}$ norm this proves the estimate.

            %we see that this upper bound is a convolution product. We may therefore use the Hausdorff-Young convolution inequality to estimate it, and so, 
            %\begin{equation*}
            %    \begin{split}
            %        \| J_2 \|_{L^1(B_R)} & \lesssim \frac{1}{\lambda} \left\| \Big( |f(y)|^2 \mathds{1}_{|y|\leq R} \Big) * \left( \frac{1}{\lambda^d} \left\langle \frac{y}{\lambda} \right\rangle^{-(d+1)} \right) \right\|_{L^1(\R^d)} \\
            %        & \lesssim \frac{1}{\lambda} \int_{B_{3R}} |f(y)|^2 \dy \\
            %        & \lesssim \frac{R^{d + 2 \alpha}}{\lambda} \| f \|_{L^2_\alpha}^2.
            %    \end{split}
            %\end{equation*}
            %By definition of the $L^1_{2 \alpha}$ norm, this proves estimate \eqref{eq:I2Decay}. We have therefore proved the proposition.
    \end{proof}

    In particular, Proposition \ref{p:SphDecay} shows that $\nabla \Pi(f \otimes f) \in \mc S'_h$ whenever $f \in L^2_\alpha$. The same remark holds for the quantity $\D(f \otimes f)$. More precisely, we have
    \begin{equation*}
        \psi_\lambda * \D(f \otimes f) = \nabla \psi_\lambda * (f \otimes f),
    \end{equation*}
    and as the kernel 
    \begin{equation}\label{eq:SpHDiv}
        \nabla \psi_\lambda (y) = \frac{1}{\lambda^{d+1}} \nabla \psi \left( \frac{y}{\lambda} \right)
    \end{equation}
    also fulfills the estimates of Proposition \ref{p:KernelEstimate}, the conclusion of Proposition \ref{p:SphDecay} also hold: $\D(f \otimes f) \in \mc S'_h$.

    \medskip

    \textbf{STEP 3: end of the proof.} We start by seeking a reformulation of the Euler equations which makes appear the pressure force operator $\nabla \bar{\Pi} (u \otimes u)$ and a polynomial remainder. For convenience, we extend the solution to negative times $\wtilde{u} \in L^\infty(]- \infty, T[ ; L^2_\alpha)$ by setting $\wtilde{u}(t) = 0$ for all $t < 0$. We also extend the pressure force in the same way: we define a distribution $\wtilde{F_\pi} \in \mc D'(]- \infty , T[\times \R^d)$ by setting, for all $\phi \in \mc D (]- \infty ; T[ \times \R^d)$, 
    \begin{equation*}
        \langle \wtilde{F_\pi}, \phi \rangle = \langle F_\pi, \mathds{1}_{t \geq 0} \phi \rangle.
    \end{equation*}
    This is possible because the restriction to non-negative times satisfies $\mathds{1}_{t \geq 0} \phi \in \mc D ([0, T[ \times \R^d])$ (see Remark \ref{r:IrregularPressure}). For simplicity of notation, we continue to note $F_\pi$ and $u$ these extensions. Then, the extensions satisfy the equation
    \begin{equation}\label{eq:WeakFormExtended}
        \partial_t u + \D(u \otimes u) + F_\pi = \delta_0(t) \otimes u_0(x),
    \end{equation}
    where the tensor product $\delta_0(t) \otimes u_0(x)$ is defined by the relation
    \begin{equation*}
        \forall \phi \in \mc D (]- \infty, T[ \times \R^d), \qquad \langle \delta_0 \otimes u_0 , \phi \rangle := \int u_0(x) \phi(0, x) \dx.
    \end{equation*}

    \medskip

    Then, by taking the divergence and the gradient of equation \eqref{eq:WeakFormExtended}, we see that the pressure force solves the elliptic equation
    \begin{equation*}
        - \Delta F_\pi = \nabla \partial_j \partial_k (u_j u_k),
    \end{equation*}
    and this holds in $\mc D'(\R^d)$ for all times $t < 0$, as the velocity is $C^0(L^2_\alpha)$, and so $u \otimes u \in C^0(\mc S')$. In particular, the pressure force $F_\pi$ and $\nabla \bar{\Pi} (u \otimes u)$ are both $\mc S'$ solutions of the same Poisson equation at every time $t < T$, and they must therefore differ by a harmonic polynomial $Q(t) \in \R[X]$. We deduce that
    \begin{equation}\label{eq:ProjectodWithQ}
        \partial_t u + \D (u \otimes u) + \nabla \bar{\Pi}(u \otimes u) + Q = \delta_0(t) \otimes u_0(x).
    \end{equation}
    Now, the whole point of the proof is to examine the condition upon which we will have $Q = 0$. 

    \medskip

    Before proving the equivalence of the statements in Theorem \ref{t:RepresentationPressure}, we write a weak form of equation \eqref{eq:ProjectodWithQ} and reorganize it in order to let the quantity $u(t) - u(0)$ appear. Consider a test function $\Phi \in \mc D (]- \infty, T[ \times \R^d ; \R^d)$. By applying the low-frequency cut-off operator $\chi(\lambda D)$ to equation \eqref{eq:ProjectodWithQ}, multiplying by $\Phi$ and integrating with respect to time and space, we have:
    \begin{multline}\label{eq:WeakIntegralsEQ1}
        - \int_{- \infty}^T \int \partial_t \Phi \cdot \chi(\lambda D) u \dx \dt + \int_{- \infty}^T   \big\langle \chi(\lambda D) \D (u \otimes u), \Phi(t) \big\rangle_x \dt \\
        + \int_{- \infty}^T \big\langle \chi(\lambda D) \nabla \bar{\Pi} (u \otimes u), \Phi (t) \big\rangle_x \dt + \langle \chi(\lambda D) Q, \Phi \rangle_{t, x} = \phi(0) \big\langle \chi(\lambda D) u_0, \Phi(0) \rangle_x,
    \end{multline}
    where, in the above, the brackets $\langle \, \cdot \, , \, \cdot \, \rangle_x$ refer to the space duality $\mc D'(\R^d) \times \mc D (\R^d)$ only, whereas the brackets $\langle \, \cdot \, , \, \cdot \, \rangle_{t, x}$ refer to the full time-space duality on $]- \infty, T[ \times \R^d$. On the one hand, Proposition \ref{p:SphDecay} shows that the quantity $\nabla \bar{\Pi}(u \otimes u)$ is an element of $\mc S'_h$. In other words (see Definition \ref{d:Chemin}), we have the convergence
    \begin{equation*}
        \chi(\lambda D) \nabla \bar{\Pi}(u \otimes u) \tend_{\lambda \rightarrow \infty} 0 \qquad \text{in } \mc S',
    \end{equation*}
    and the same property holds for $\D(u \otimes u)$ by \eqref{eq:SpHDiv} and the remarks following that equation. Applying this to the test function $\Phi$, we see that for every time $t \in ]- \infty, T[$ we must have
    \begin{equation*}
        \big\langle \chi(\lambda D)  \big( \D (u \otimes u) + \nabla \bar{\Pi}(u \otimes u) \big), \Phi(t) \big\rangle_x \tend 0 \qquad \text{as } \lambda \rightarrow \infty,
    \end{equation*}
    and so, by dominated convergence, we also have
    \begin{equation*}
        \int_{- \infty}^T \big\langle \chi(\lambda D) \big( \D (u \otimes u) + \nabla \bar{\Pi}(u \otimes u) \big), \Phi(t) \big\rangle_x \dt \tend 0 \qquad \text{as } \lambda \rightarrow \infty.
    \end{equation*}
    On the other hand, we notice that the first integral in \eqref{eq:WeakIntegralsEQ1} can be rewritten in order to make the difference $u(t) - u(0)$ appear. Here it is more convenient to take $\Phi$ of the tensor product form $\Phi(t,x) = \phi(t) \varphi(x)$ with
    \begin{equation*}
        \phi \in \mc D (]- \infty, T[) \qquad \text{and} \qquad \varphi \in \mc D(\R^d).
    \end{equation*}
    Because $\phi(0) = -\int_{\R_+} \phi(t) \dt$, we have
    \begin{equation*}
        \begin{split}
            \int_{- \infty}^T \phi'(t) \big\langle \chi(\lambda D) u(t), \varphi \big\rangle_x \dt & = \int_0^T \phi'(t) \big\langle \chi(\lambda D) u(t), \varphi \big\rangle_x \dt \\
            & = \int_0^T \phi'(t) \big\langle \chi(\lambda D) \big( u(t) - u(0) \big), \varphi \big\rangle_x \dt - \phi(0) \big\langle \chi(\lambda D) u(0), \varphi \big\rangle_x.
        \end{split}
    \end{equation*}
    Now, also note that $\chi(\lambda D)Q = Q$, because the Fourier transform of $Q(t)$ is supported in $\{0\}$, as $Q(t) \in \R[X]$. By putting everything together in equation \eqref{eq:WeakIntegralsEQ1}, we find that, as $\lambda \rightarrow \infty$,
    \begin{equation*}
        \big\langle Q, \phi \varphi \big\rangle_{t, x} = \phi(0) \big\langle \chi (\lambda D) \big( u_0 - u(0) \big), \varphi \big\rangle_x + \int_0^T \phi'(t) \big\langle \chi(\lambda D) \big( u(t) - u(0) \big), \varphi \big\rangle_x \dt + o(1).
    \end{equation*}
    Now, the difference $u_0 - u(0)$ is a divergence-free function and is orthogonal to all divergence-free functions in the $\mc D' \times \mc D$ duality, since $u$ is a weak solution of the Euler equations with initial datum $u_0$ (by assumption) \textsl{and} with initial datum $u(0)$ (because $u \in C^0_T(L^2_\alpha)$). We refer to Remark \ref{r:nonEquivalence} for additional comments on this particular phenomenon. This function must therefore be a polynomial $u_0 - u(0) \in \R[X]$, and is of degree at most zero since it also lies in $L^2_\alpha$. We deduce that $\chi (\lambda D) \big( u_0 - u(0) \big) = u_0 - u(0)$ and the previous equation becomes
    \begin{equation}\label{eq:LastSummaryLastTheorem}
        \big\langle Q, \phi \varphi \big\rangle_{t, x} = \phi(0) \big\langle u_0 - u(0), \varphi \big\rangle_x + \lim_{\lambda \rightarrow \infty} \int_0^T \phi'(t) \big\langle \chi(\lambda D) \big( u(t) - u(0) \big), \varphi \big\rangle_x \dt.
    \end{equation}
    
    \medskip

    We are ready to prove the equivalence of the assertions in Theorem \ref{t:RepresentationPressure}. First assume \textit{(ii)}, that  $u(t) - u(0) \in \mc S'_h$ for all times $0 \leq t < T$ and that $u_0 = u(0)$. Dominated convergence provides the limit
    \begin{equation}\label{eq:VelocityDifferenceDominatedConvergence}
        \int_0^T \phi'(t) \big\langle \chi(\lambda D) \big( u(t) - u(0) \big), \varphi \big\rangle_x \dt \tend 0 \qquad \text{as } \lambda \rightarrow \infty.
    \end{equation}
    Consequently, equation \eqref{eq:LastSummaryLastTheorem} shows that $Q = 0$ in $\mc D' (]- \infty, T[ \times \R^d)$, so that $F_\pi$ must be equal to $\nabla \bar{\Pi}(u \otimes u)$ in that space. This proves \textit{(i)}, and it suffices to remark that $\nabla \bar{\Pi}(u \otimes u) \in C^0([0, T] ; \mc S')$ lies in $\mc S'_h$ to make sure that \textit{(iii)} also holds.

    \medskip

    Suppose that \textit{(i)} is true. Then $Q = 0$ in $\mc D'(](- \infty, T[ \times \R^d)$. By picking $\phi(t)$ that is supported away from $t = 0$, we see from \eqref{eq:LastSummaryLastTheorem} that $u(t) - u(0) \in \mc S'_h$ for any $t > 0$, and this trivially extends to $t \geq 0$. And now, by taking $\phi$ and $\varphi$ arbitrary, we see that \eqref{eq:VelocityDifferenceDominatedConvergence} holds, and so we must have $u_0 = u(0)$, which proves point \textit{(ii)}. Point \textit{(iii)} also holds since $F_\pi = \nabla \bar{\Pi}(u \otimes u) \in C^0([0, T] ; \mc S')$ lies in $\mc S'_h$.

    \medskip

    Finally, assume that \textit{(iii)} holds. Since we know that $\nabla \bar{\Pi}(u \otimes u) \in \mc S'_h$ for every time $t \geq 0$ (from Proposition \ref{p:SphDecay}), this means that $Q = F_\pi - \nabla \bar{\Pi} (u \otimes u) \in C^0([0, T[ ; \mc S')$ and that $Q(t) \in \mc S'_h$ for every time. As a consequence point \textit{(i)} holds and the above shows that point \textit{(ii)} follows. This completes the proof.
\end{proof}

{\small

}


\begin{thebibliography}{xxx}

% A momentum condition on the pressure to find uniqueness.
\bibitem{AsAsFd} B. Álvarez-Samaniego, W. P. Álvarez-Samaniego and P. G. Fernández-Dalgo:
\textit{On the use of the Riesz transforms to determine the pressure term in the incompressible Navier-Stokes equations on the whole space}.
Acta Appl. Math. 176 (2021), Paper No. 10, 10 pp.

% Non-decaying solutions for SQG and 3D Euler equations using a Serfati type identity.
\bibitem{ACEK} D. M. Ambrose, E. Cozzi, D. Erickson and J.P. Kelliher:
\textit{Existence of solutions to fluid equations in Hölder and uniformly local Sobolev spaces}.
J. Differential Equations 364 (2023), 107--151.

% Serfati solutions explained
\bibitem{AKLFNL} D. M. Ambrose, J. P. Kelliher, M. C. Lopes Filho and H. J. Nussenzveig Lopes:
\textit{Serfati solutions to the 2D Euler equations on exterior domains}.
J. Differential Equations 259 (2015), n. 9, 4509--4560. 

\bibitem{BCD} H. Bahouri, J.-Y. Chemin and R. Danchin:
\textit{``Fourier analysis and nonlinear partial differential equations''}.
Grundlehren der Mathematischen Wissenschaften (Fundamental Principles of Mathematical Scinences), Springer, Heidelberg, 2011.

%\bibitem{BKM} J. T. Beale, T. Kato and A. Majda:
%\textit{Remarks on the breakdown of smooth solutions for the 3-D Euler equations}.
%Comm. Math. Phys. 94 (1984), no.1, 61--66.

% Yudovich regularity solutions with solution $O(|x|^\beta)$ and $\beta < 1$ with decay conditions on the vorticity. Flow map techniques.
\bibitem{BMP} D. Benedetto, C. Marchioro and M. Pulvirenti:
\textit{On the Euler flow in} $\R^2$.
Arch. Rational Mech. Anal. 123 (1993), no. 4, 377--386.

\bibitem{Bourdaud} G. Bourdaud:
\textit{Réalisation des espaces de Besov homogènes}.
Arkiv för Mathematik, {\bf 26}, 1988, pp. 41--54.

%\bibitem{BL} J. Bourgain and D. Li:
%\textit{Strong illposedness of the incompressible Euler equation in integer $C^m$ spaces}.
%Geom. Funct. Anal. \textbf{25} (2015), n. 1, pp. 1--86. 

%\bibitem{Chemin98} J.-Y. Chemin: 
%\textit{``Perfect incompressible fluids''}. 
%Oxford Lecture Series in Mathematics and its Applications,15, The Clarendon Press, Oxford University Press, New York, 1998.

% Non-uniqueness for Lorentz $L^{1, \infty}$ initial data
\bibitem{BC} E. Brué and M. Colombo:
\textit{Nonuniqueness of solutions to the Euler equations with vorticity in a Lorentz space}.
Comm. Math. Phys. 403 (2023), no. 2, 1171--1192.

\bibitem{BCK} E. Bruè, M. Colombo and A. Kumar:
\textit{Flexibility of Two-Dimensional Euler Flows with Integrable Vorticity}.
arXiv:2408.07934


% $O(x^1/2)$ solutions in the plane. Yudovich regularity. Flow map method.
\bibitem{Brunelli} E. Brunelli:
\textit{On the Euler equation in the plane}.
Comm. Partial Differential Equations 35 (2010), no. 3, 480--495.

% Non-uniqueness for vorticity in Hardy spaces with $p < 1$.
\bibitem{BM} M. Buck and S. Modena:
\textit{Non-uniqueness and energy dissipation for 2D Euler equations with vorticity in Hardy spaces}.
J. Math. Fluid Mech. 26 (2024), no. 2, Paper No. 26, 39 pp.

% First result for Euler in wighted spaces.
\bibitem{Cantor1979} M. Cantor:
\textit{Some problems of global analysis on asymptotically simple manifolds}.
Compositio Math. 38 (1979), no. 1, 3--35.

% Also an example of solutions in spaces of non-bounded functions. Deep discussion of the generalized Galilean invariance. 
\bibitem{CW} D. Chae and J. Wolf:
\textit{The Euler equations in a critical case of the generalized Campanato space}.
Ann. Inst. H. Poincaré C Anal. Non Linéaire 38 (2021), no. 2, 201--241.

\bibitem{Chemin-cours} J.-Y. Chemin:
\textit{Localization in Fourier space and Navier-Stokes system}.
(Lecture notes, De Giorgi center), Phase space analysis of partial differential equations. Vol. I, 53--135, Pubbl. Cent. Ric. Mat. Ennio Giorgi, Scuola Norm. Sup., Pisa, 2004.

% Energy bounds for Euler in the Yudovich regularity class with uniformly locally $L^2$ spaces. This is the closest paper to ours.
\bibitem{CZ} V. Chepyzhov and S. Zelik:
Infinite\textit{ energy solutions for dissipative Euler equations in }$\R^2$.
J. Math. Fluid Mech. 17 (2015), no. 3, 513--532.

\bibitem{Cobb1} D. Cobb:
\textit{Bounded Solutions in Incompressible Hydrodynamics}.
 J. Funct. Anal., Vol. 286, 5, 2024, 110290.

\bibitem{Cobb2} D. Cobb:
\textit{Remarks on Chemin's Space of Homogeneous Distributions}.
Math. Nachr. 297(2024), no. 3, 895–913.

%\bibitem{CobbPhD} D. Cobb:
%\textit{Étude mathématique de fluides en interaction avec un champ magnétique}.
%PhD dissertation, defended in June 2022.

% Uniqueness with a $L^p$ condition on the velocity, $p < \infty$.
\bibitem{CF3} D. Cobb and F. Fanelli:
\textit{Elsässer formulation of the ideal MHD and improved lifespan in two space dimensions}.
J. Math. Pures Appl. (9) 169 (2023), 189--236.

% Existence with an Eulerian method in the Yudovich regularity, with mild (log) growth on the velocity. A close paper to ours. No uniqueness statement. No integrability condition is required on the vorticity, contrary to \cite{BMP}.
\bibitem{Cozzi} E. Cozzi:
\textit{Solutions to the 2D Euler equations with velocity unbounded at infinity}.
J. Math. Anal. Appl. 423 (2015), no.1, 144--161.

% Existence and uniqueness Serfati solutions with growth $O(|x|^{1/2})$. Continuous dependency on the initial data. Still flow maps estimates.
\bibitem{CK} E. Cozzi and J. P. Kelliher:
\textit{Well-posedness of the 2D Euler equations when velocity grows at infinity}.
Discrete Contin. Dyn. Syst. 39 (2019), no. 5, 2361--2392.

\bibitem{Danchin-cours} R. Danchin:
\textit{Fourier Analysis Methods for PDEs}.
(Lecture notes, Wuhan, Beijing), 2005. Available at \url{https://perso.math.u-pem.fr/danchin.raphael/cours/courschine.pdf}

\bibitem{DS} C. De Lellis and L. Székelyhidi Jr.
\textit{Dissipative continuous Euler flows}.
Invent. Math. 193 (2013), n. 2, pp. 377--407.

% Existence of Lp solutions
%\bibitem{DpM} R. DiPerna and A. Majda:
%\textit{Concentrations in regularizations for 2-D incompressible flow}.
%Comm. Pure Appl. Math. 40 (1987), no. 3, 301--345.

% The paper that seems to be at the base fo the flow map approach.
\bibitem{EbinMardsen} D. G. Ebin and J. Marsden:
\textit{Groups of diffeomorphisms and the motion of an incompressible fluid}.
Ann. of Math. (2) 92 (1970), 102--163.

% A very nice discussion of the role of symmetry. Well-Posedness for solutions with linear growth. Uniqueness is obtained with flow map methods.
\bibitem{Elgindi2020} T. M. Elgindi and I.-J. Jeong:
\textit{Symmetries and critical phenomena in fluids}.
Comm. Pure Appl. Math. 73 (2020), no. 2, 257--316.

%\bibitem{EM} T. M. Elgindi and N. Masmoudi:
%$L^\infty$ \textit{ill-posedness for a class of equations arising in hydrodynamics}.
%Arch. Ration. Mech. Anal. \textbf{235} (2020), n. 3, 1979--2025. 

% Uniqueness with a condition based on the velocity.
\bibitem{FL} P. G. Fern\'andez-Dalgo and P. G. Lemarié-Rieusset:
\textit{Characterisation of the pressure term in the incompressible Navier-Stokes equations on the whole space}.
Discrete \& Continuous Dynamical Systems - S, 14 (8) (2021), pp. 2917--2931.

\bibitem{G} T. Gallay:
\textit{Infinite energy solutions of the two-dimensional Navier-Stokes equations}.
(Lecture notes), Annales de la Faculté des Sciences de Toulouse \textbf{26} (2017), 979--1027.

\bibitem{GS2014} T. Gallay and S. Slijepčević:
\textit{Energy bounds for the two-dimensional Navier-Stokes equations in an infinite cylinder}.
Comm. Partial Differential Equations 39 (2014), no. 9, 1741--1769.

\bibitem{GS2015} T. Gallay and S. Slijepčević:
\textit{Uniform boundedness and long-time asymptotics for the two-dimensional Navier-Stokes equations in an infinite cylinder}.
J. Math. Fluid Mech. 17 (2015), no. 1, 23--46.

% Uniqueness condition based on the pressure: it is the image of $L^\infty$ functions by a Risz transform-based expression.
\bibitem{GIKM} Y. Giga, K. Inui, J. Kato and S. Matsui:
\textit{Remarks on the uniqueness of bounded solutions of the Navier-Stokes equations}. Proceedings of the Third World Congress of Nonlinear Analysts, Part 6 (Catania, 2000). Nonlinear Anal. 47 (2001), n. 6, 4151--4156. 

%\bibitem{Grafakos} L. Grafakos:
%\textit{``Modern Fourier analysis''}.
%Second edition. Graduate Texts in Mathematics, 250. Springer, New York, 2009.

%\bibitem{Hormander} L. Hörmander:
%\textit{Estimates for translation invariant operators in $L^p$ spaces}.
%Acta Math. 104 (1960), 93--140.

% Uniqueness when the pressure is BMO.
\bibitem{KatoJ} J. Kato:
\textit{The uniqueness of nondecaying solutions for the Navier-Stokes equations}.
Arch. Ration. Mech. Anal. 169 (2003), n. 2, 159--175. 

%\bibitem{KMPT} T. Kato, M. Mitrea, G. Ponce, and M. Taylor:
%\textit{Extension and representation of divergence-free vector fields on bounded domains}.
%Math. Res. Lett.7 (2000), no. 5--6, 643--650.

% Uniqueness of Serfati solutions based on a renormalized Serfati identity taking into account far fields.
\bibitem{Kelliher} J. P. Kelliher:
\textit{A characterization at infinity of bounded vorticity, bounded velocity solutions to the 2D Euler equations}. 
Indiana Univ. Math. J. 64 (2015), n. 6, 1643--1666. 

%\bibitem{KT} H. Koch and D. Tataru:
%\textit{Well-posedness for the Navier-Stokes equations}.
%Adv. Math. 157 (2001), n. 1, 22--35. 

% Introducing generalized Galilean transforms.
\bibitem{Kukavica} I. Kukavica:
\textit{On local uniqueness of weak solutions of the Navier-Stokes system with bounded initial data}. 
J. Differential Equations 194 (2003), n. 1, 39--50. 

% More generalized Galilean transforms and condition $\pi(t, x) = o(|x|)$ for uniqueness.
\bibitem{KV} I. Kukavica and V. Vicol:
\textit{On local uniqueness of weak solutions to the Navier-Stokes system with $BMO^{-1}$ initial datum}.
J. Dynam. Differential Equations 20 (2008), n. 3, 719--732. 

% Morrey type condition on the velocity.
\bibitem{LM} P. G. Lemarié-Rieusset:
\textit{``Recent developments in the Navier-Stokes problem''}.
Chapman \& Hall/CRC Research Notes in Mathematics, n. 431. Chapman \& Hall/CRC, Boca Raton, FL, 2002. 

\bibitem{MB} A. J. Majda and A. L. Bertozzi:
\textit{``Vorticity and incompressible flow''}.
Cambridge Texts in Applied Mathematics, 27. Cambridge University Press, Cambridge, 2002.

\bibitem{Maremonti} P. Maremonti:
\textit{On the uniqueness of bounded very weak solutions to the Navier-Stokes Cauchy problem}.
Appl. Anal. 90 (2011), n. 1, 125--139. 

% Solutions in all dimensions in weighted Sobolev spaces. This is where I got most of the bibliography.
\bibitem{OT} R. McOwen and P. Topalov
\textit{Perfect fluid flows on} $\R^d$ \textit{with growth/decay conditions at infinity}. Math. Ann. 383, 1451--1488 (2022).

% Matrix solutions, with a discussion of far-field conditions. 
\bibitem{Miller} E. Miller:
\textit{Finite-time blowup for smooth solutions of the Navier--Stokes equations on the whole space with linear growth at infinity}.  arXiv:2103.12237v1.

% Campanato space-based condition on the pressure.
\bibitem{NY} E. Nakai and T. Yoneda:
\textit{Riesz transforms on generalized Hardy spaces and a uniqueness theorem for the Navier-Stokes equations}.
Hokkaido Math. J. 40 (2011), n. 1, 67--88. 

% Well posedness in enpoint Besov-Lipschitz space. The pressure is assumed to be $\nabla \Pi (u \otimes u)$.
\bibitem{PP} H. C. Pak and Y. J. Park:
\textit{Existence of solutions for the Euler equations in a critical Besov space $B^1_{\infty, 1}(\R^n)$}.
Comm. Partial Differential Equations 29 (2004), n. 7-8, pp. 1149--1166.

\bibitem{RSS} H. Rafeiro, N. Samko and S. Samko:
``Morrey-Campanato spaces: an overview''. in \textit{Operator theory, pseudo-differential equations, and mathematical physics}, pp. 293–323. Oper. Theory Adv. Appl., 228, Birkhäuser/Springer Basel AG, Basel, 2013

%\bibitem{YS} Y. Sawano:
%\textit{Homogeneous Besov spaces}.
%Kyoto J. Math. \textbf{60} (2020), n. 1, pp. 1--43.

\bibitem{Serfati} P. Serfati:
\textit{Solutions $C^\infty$ en temps, $n$-$\log$ Lipschitz bornées en espace et équation d'Euler}. (in French). [Solutions $C^\infty$ in time and n-log Lipschitz bounded in space, and the Euler equation]
C. R. Acad. Sci. Paris Sér. I Math. 320 (1995), n. 5, 555--558. 

%\bibitem{Stein} E. M. Stein:
%\textit{``Harmonic analysis: real-variable methods, orthogonality, and oscillatory integrals''}.
%With the assistance of Timothy S. Murphy. Princeton Mathematical Series, \textbf{43}. Monographs in Harmonic Analysis, III. Princeton University Press, Princeton, NJ, 1993.

\bibitem{Stein1970} E. M. Stein: 
\textit{Singular Integrals and Differentiability Properties or Functions}. 
Princeton Univ. Press., Princeton, NJ, 1970.

\bibitem{ST} X. Sun and P. Topalov:
\textit{Spatially quasi-periodic solutions of the Euler equation}.
J. Math. Fluid Mech. 25 (2023), no. 3, Paper No. 66, 34 pp.

\bibitem{Vishik1} M. Vishik:
\textit{Instability and non-uniqueness in the Cauchy problem for the Euler equations of an ideal incompressible fluid. Part I}.
arXiv:1805.09426.

\bibitem{Vishik2} M. Vishik:
\textit{Instability and non-uniqueness in the Cauchy problem for the Euler equations of an ideal incompressible fluid. Part I}.
arXiv:1805.09440.

\bibitem{Yudovich} V. I. Yudovich:
\textit{Non-stationary flows of an ideal incompressible fluid}.(Russian)
Ž. Vyčisl. Mat i Mat. Fiz. 3 (1963), pp. 1032--1066.

\bibitem{YudovichEN} V. I. Yudovich:
\textit{Uniqueness theorem for the basic nonstationary problem in the dynamics of an ideal incompressible fluid}.
Math. Res. Lett. 2 (1995), no. 1, 27–38.

\bibitem{Z} S. Zelik:
\textit{Infinite energy solutions for damped Navier-Stokes equations in} $\R^2$.
J. Math. Fluid Mech. 15 (2013), no. 4, pp. 717--745.

\end{thebibliography}
\end{document}